\newenvironment{keywords}{
\list{}{\advance\topsep by0.35cm\relax\small
\leftmargin=1cm
\itemindent\listparindent
\rightmargin\leftmargin}\item[\hskip\labelsep
\bfseries Keywords:]}
{\endlist}
\newtheorem{theorem}{Theorem}[section]
\newtheorem{corollary}[theorem]{Corollary}
\newtheorem{lemma}[theorem]{Lemma}
\newtheorem{proposition}[theorem]{Proposition}
\newtheorem{definition}[theorem]{Definition}
\newtheorem{remark}[theorem]{Remark}
\newtheorem{proof}[theorem]{proof}
\newcommand{\dint}{\displaystyle\int}
\begin{document}

\title{Global Existence and Long Time Dynamics of a Four Compartment Brusselator Type System}

\maketitle

\centerline{\scshape Rana D. Parshad }
\medskip
{\footnotesize
 \centerline{ Department of Mathematics,}
 \centerline{Clarkson University,}
   \centerline{ Potsdam, New York 13699, USA.}

} 
\medskip

\centerline{\scshape Said Kouachi}
\medskip
{\footnotesize
   \centerline{Department of Mathematics, College of Science, }
   \centerline{Qassim University,}
   \centerline{P.O.Box 6644, Al-Gassim, Buraydah 51452, Kingdom of Saudi Arabia.}
}
\medskip

\centerline{\scshape Nitu Kumari }
\medskip
{\footnotesize
 \centerline{School of Basic Sciences,}
 \centerline{Indian Institute of Technology Mandi,}
   \centerline{ Mandi, Himachal Pradesh 175 001, India.}
   }
%

\date{}
\maketitle

\begin{abstract}
In this work we consider a four compartment Brusselator system. The reaction
terms of this system are of non constant sign, thus components of the
solution are not bounded apriori, and functional means to derive apriori
bounds will fail. We prove global existence of solutions, via construction
of an appropriate lyapunov functional. Furthermore due to the sign changing
nonlinearities, the asymptotic sign condition is also not satisfied, causing
further difficulties in proving the existence of a global attractor. These
difficulties are circumvented via the use of the lyapunov functional
constructed along with the use of the uniform gronwall Lemma. We are able to
prove the existence of an $(L^2(\Omega),H^2(\Omega))$ attractor for the
system, improving previous results in the literature from \cite{Y11b}. The Hausdorff and fractal dimensions of the attractor are also shown to be finite. In particular we derive a lower bound on the Hausdorff dimension of the global attractor. We use numerical simulations, as well as numerical attractor reconstruction methods via non linear time series analysis, to validate our results.
\end{abstract}

\begin{keywords}
 reaction diffusion system, global existence, global attractor, Lyapunov functional.
\end{keywords}



\section{\textbf{Introduction}}
\label{1}

The object of the current manuscript is to investigate global existence and the long time dynamics, of the following four compartment
Brusselator type reaction diffusion system.

\begin{eqnarray}
\frac{\partial u}{\partial t}-a\Delta u &=&\alpha -\left( \beta +1\right)
u+u^{2}v+D_{1}\left( w-u\right) ,\text{ \ \ \ \ \ \ \ \ \ \ \ \ \ }
\label{(1.1)} \\
\frac{\partial v}{\partial t}-b\Delta v &=&\beta u-u^{2}v+D_{2}\left(
z-v\right) ,\text{\ \ \ \ \ \ \ \ \ \ \ \ \ \ \ \ \ \ \ \ \ \ \ \ \ \ \ \ }
\label{(1.2)} \\
\frac{\partial w}{\partial t}-c\Delta w &=&\alpha -\left( \beta +1\right)
w+w^{2}z+D_{3}\left( u-w\right) ,\ \ \ \ \ \ \ \ \ \ \ \ \ \ \ \ \ 
\label{(1.3)} \\
\frac{\partial z}{\partial t}-d\Delta z &=&\beta w-w^{2}z+D_{4}\left(
v-z\right) ,\text{\ \ \ }\ \ \ \ \ \ \ \ \ \ \ \ \ \ \ \ \ \ \ \ \ \ \ \ \ \
\ \ \ \ \   \label{(1.4)}
\end{eqnarray}%
\ \ \ in $\mathbb{R}^{+}\times \Omega $, with the homogeneous Dirichlet
boundary condition%
\begin{equation}
u=v=w=z=0\ \text{ in }\ \mathbb{R}^{+}\times \partial \Omega .  \label{(1.5)}
\end{equation}%
We also impose suitable initial data

\begin{equation}
u(0,x)=u_{0}(x),\ v(0,x)=v_{0}(x),\ w(0,x)=w_{0}(x),\text{ }z(0,x)=z_{0}(x)%
\text{ \ in}\;\Omega .  \label{(1.6)}
\end{equation}%
Here $\Omega $ is an open bounded domain of class $\mathbb{C}^{1}$ in $%
\mathbb{R}^{N}$, with boundary $\partial \Omega \;.\;$The constants $a$ , $b$%
, $c$, $d,\ \alpha ,\ \beta $ and $D_{i},i=1,2,3$ and $4$ are positive. The
initial data are assumed to be nonnegative. The reaction terms denoted
respectively by $f$, $g$, $h$ and $k$ are continuously differentiable
functions on $\mathbb{R}_{+}^{4}$ satisfying $f(0,v,w,z)\geq 0$, $%
g(u,0,w,z)\geq $ $0,$ $h(u,v,0,z)\geq 0$ and $k(u,v,w,0)\geq $ $0$ for all $%
u,$ $v,$ $w,$ $z$ $\geq 0$ which imply, via invariant region methods \cite{smoller}, the positivity of the solution on its interval of existence.
Several authors as \cite{K01b}, \cite{K02}, \cite{K11} , \cite
{Parshad-Kouachi-Gutierrez} and  \cite{smoller} established global existence
for solutions of m-components systems $(m\geq 2)$ with the boundary
conditions $(1.5)$.

To the best of our knowledge the most recent work \cite{Y11b}, that considers the system \eqref{(1.1)} - \eqref{(1.5)}, does so under the following constraints 

\begin{equation}
\label{s1}
a=c, \ b=d,\ D_1=D_3, \ D_2=D_4. 
\end{equation}

This constraint is required, as the techniques in \cite{Y11b}, require addition of the equations, to derive apriori bounds on the sum of the resultants. The addition causes the problematic nonlinearities to cancel because of \eqref{s1}, which facilitates the apriori estimates.
In \cite{Y11b} Global existence, existence of a $(L^2(\Omega)(\Omega),H^1_0(\Omega))$ global attractor, and existence of an exponential attractor are all demonstrated, under the assumption via \eqref{s1}.
The primary contributions of the current work are 

\begin{itemize}
\item We \emph{remove} the assumptions via \eqref{s1}, and still derive global existence. 
\item We \emph{remove} the assumptions via \eqref{s1} and show  the existence of a finite dimensional global attractor. 
\item We \emph{improve} the regularity of the attractor, and show that it is infact an $(L^2(\Omega), H^2(\Omega))$ attractor, if the space dimension $N \leq 3$.
\item  We present a new \emph{lower} bound on the Hausdorff dimension of the attractor.
\item We use nonlinear time series analysis, to numerically estimate this lower bound. We also quantify temporal chaos in the system.
\item We perform numerical simulations to elucidate the chaotic dynamics on the attractor, for a case where \eqref{s1} does not hold, that is $D_1 \neq D_2 \neq D_3 \neq D_4$. 
\end{itemize}

\begin{remark}
We would like to point out that the case with $D_1 \neq D_2 \neq D_3 \neq D_4$, as we consider, can lead to very interesting dynamical behavior, including aperiodic dynamics or a chaotic attractor. This is validated via numerical simulations in section \ref{8}. For the numerics we use Neumann boundary conditions. Note however, the existence results hold under these boundary conditions as well. 
\end{remark}

We first present a brief reveiw of the relevant literature.
The Brusselator, in its original form, is a system of 2 ODE's that model cubic autocatalytic chemical reactions \cite{prigogine}. The diffusive Brusselator system is given by

\begin{eqnarray}
\frac{\partial u}{\partial t}-d_1\Delta u &=&a -\left( b +1\right)
u+u^{2}v,
\label{(1.1b)} \\
\frac{\partial v}{\partial t}-d_2\Delta v &=&b u-u^{2}v.
\label{(1.2b)} \\
\end{eqnarray}

This system exhibits rich dynamics, including oscillations, spatiotemporal chaos and turing instabilities \cite{pena,st}. There has been a recent interest in the global dynamics of such systems, and a number of works have appeared to this end \cite{Y09,Y11,Y11b}. Equations of this form pose various challenges from a mathematical point of view. Note, the reaction terms in \eqref{(1.1)}-\eqref{(1.5)}  do not have
a constant sign, so neither component is a priori bounded, or at least bounded in some $L^p$-space, in order to apply the well known regularizing
effect.

From the point of view of long time dynamics, an inherent difficulty in  systems of the type considered, is that the asymptotic sign condition in vector version

\begin{equation}
\limsup_{|s| \rightarrow \infty} F(s) \cdot s \leq C
\end{equation}

(where $C$ is a positive constant and $F$ is the nonlinear term, representing the reaction), is not satisfied. 
This again is primarily due to the opposite signed terms  $u^2v,w^2z$ and 
$-u^2v,-w^2z$ in equations \eqref{(1.1)} - \eqref{(1.5)}
Usually this condition plays a key role in the dissipation process, and thus if it is satisfied, often leads to 
the existence of a global attractor, for the system. This opposite signed nonlinearity however, best represents the chemical process at work. 
From a mathematical point of view, this opposite signed coupling, causes extensive problems in proving existence of a global attractor. Essentially, showing asymptotic compactness of the semigroup in question, is not straightforward anymore. This difficulty is circumvented by making various apriori estimates, where the key tool used is the uniform Gronwall lemma. 

The organisation of the current manuscript is as follows. In section \ref{2} we introduce various preliminaries that are required throught the manuscript. In section \ref{3} the global existence of strong solutions is proved via Theorem \ref{thm1} and Proposition \ref{prop1}. In section \ref{4} we prove the global existence of weak solutions, and construct absorbing sets, in the phase space $L^2(\Omega)$ via Proposition \ref{propweak} and Lemma \ref{absl2}. In section \ref{5} we make further estimates as required for the existence and regularity of the global attractor via Lemmas \ref{lemh21}, \ref{lem1h212}. In section \ref{6} we show the existence of a global attractor via Theorem \ref{t 1}. Section \ref{7} contains results on the finite dimensionality of the Hausdorff and fractal dimensions of the attractor, by providing upper bounds for them via Theorem \ref{gattrd}. Furthermore, we also present a lower bound on the Hausdorff dimension of the attractor via Theorem \ref{gattrdlower}. Section \ref{8} is devoted to numerical simulations. Here we essentially elucidate the chaotic dynamics of the attractor, for the case that $D_1 \neq D_2 \neq D_3 \neq D_4$. Lastly, in section \ref{9} we use nonlinear time series analysis to estimate lower bounds on the attractor, as well as show temporal chaos.

Also in all estimates made hence forth, $C,C_i, i=1,2,3..$ are generic constants, and can change in value from line to line, and sometimes within the same line, if so required.

\section{\textbf{Preliminary observations}}
\label{2}
The usual norms in spaces $L^{p}(\Omega)$, $L^{\infty}(\Omega)$ and $C(%
\overline{\Omega })$ are denoted respectively by

\begin{eqnarray}
\label{(2.1)} 
\left\Vert u\right\Vert _{p}^{p} &=&\int_{\Omega }\left\vert u(x)\right\vert
^{p}dx ,  \\
\left\Vert u\right\Vert _{\infty } &=&\underset{x\in \Omega }{\max }%
\left\vert u(x)\right\vert .  \label{(2.2)}
\end{eqnarray}%
It is well known that to prove global existence of solutions to \eqref{(1.1)}-\eqref{(1.5)}
\cite{henry}, it suffices to derive a uniform estimate of $%
\left\Vert f(u,v,w,z)\right\Vert _{p}$, $\left\Vert g(u,v,w,z)\right\Vert
_{p}$, $\left\Vert h(u,v,w,z)\right\Vert _{p}$ and $\left\Vert
k(u,v,w,z)\right\Vert _{p}$ on $[0;T_{max}[$ for some $p>N/2.$ Our aim is to
construct polynomial Lyapunov functionals allowing us to obtain $L^{p}-$
bounds on $u;v,w$ and $z$ $\ $that lead to global existence.
Since the functions $f,g,h$ and $k$ are continuously differentiable on $%
IR_{+}^{4}$, then for any initial data in $C(\overline{\Omega })$, it is
easy to check directly their Lipschitz continuity on bounded subsets of the
domain of a fractional power of the operator 
\begin{equation}
 \label{(2.3)}
\left( 
\begin{array}{cccc}
-a\Delta  & 0 & 0 & 0 \\ 
0 & -b\Delta  & 0 & 0 \\ 
0 & 0 & -c\Delta  & 0 \\ 
0 & 0 & 0 & -d\Delta 
\end{array}%
\right)  
\end{equation}%
Under these assumptions, the following local existence result is well known
(see \cite{pierre} .
\begin{proposition}
The system \eqref{(1.1)}-\eqref{(1.5)} admits a unique, classical solution%
\newline
$(u,v,w,z)$ on $\ (0,T_{\max }[\times \Omega $. If $T_{\max }<\infty $ then%
\newline
$\underset{t\nearrow T_{\max }}{\lim }\left\{ \left\Vert u\left( t,.\right)
\right\Vert _{\infty }+\left\Vert v\left( t,.\right) \right\Vert _{\infty
}+\left\Vert w\left( t,.\right) \right\Vert _{\infty }+\left\Vert z\left(
t,.\right) \right\Vert _{\infty }\right\} =+\infty $\newline
where $T_{\max }$ $\left\{ \left\Vert u_{0}\right\Vert _{\infty },\left\Vert
v_{0}\right\Vert _{\infty },\left\Vert w_{0}\right\Vert _{\infty
},\left\Vert z_{0}\right\Vert _{\infty }\right\} $ denotes the eventual
blow-up time.
\end{proposition}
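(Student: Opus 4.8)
The plan is to cast the system \eqref{(1.1)}--\eqref{(1.5)} as an abstract semilinear parabolic Cauchy problem and invoke the standard analytic-semigroup local existence machinery. First I would introduce the product space $X = \left(L^{p}(\Omega)\right)^{4}$ and let $A$ denote the diagonal diffusion operator \eqref{(2.3)}, equipped with the domain encoding the homogeneous Dirichlet condition \eqref{(1.5)}. Since $\Omega$ is a bounded $C^{1}$ domain and the diffusion coefficients $a,b,c,d$ are positive constants, $-A$ is sectorial and generates an analytic semigroup $e^{-tA}$ on $X$; consequently the fractional powers $A^{\gamma}$ and the interpolation spaces $X^{\gamma}=D(A^{\gamma})$ are well defined. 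Writing $U=(u,v,w,z)$ and $F(U)=(f,g,h,k)$, the system takes the abstract form $U_{t}+AU=F(U)$ with $U(0)=U_{0}$ and $U_{0}\in\left(C(\overline{\Omega})\right)^{4}$.

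Next I would fix $\gamma\in(0,1)$ large enough that $X^{\gamma}$ embeds continuously into $\left(C(\overline{\Omega})\right)^{4}$ (possible by Sobolev embedding for the relevant range of $p$ and $N$). Because $f,g,h,k$ are continuously differentiable on $\mathbb{R}_{+}^{4}$, the induced Nemytskii map $F$ is locally Lipschitz from bounded subsets of $X^{\gamma}$ into $X$, which is precisely the hypothesis already recorded above. The local existence and uniqueness of a mild solution on a maximal interval $[0,T_{\max})$ then follows from the abstract theorem of Henry, and the review of Pierre cited in the statement: one solves the Duhamel integral equation $U(t)=e^{-tA}U_{0}+\int_{0}^{t}e^{-(t-s)A}F(U(s))\,ds$ by the contraction mapping principle in a ball of $C([0,\tau];X^{\gamma})$, with $\tau$ depending only on $\|U_{0}\|_{X^{\gamma}}$, and then extends it to its maximal interval. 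Uniqueness is a direct Gronwall argument off the Lipschitz bound, and parabolic smoothing of the analytic semigroup upgrades the mild solution to a classical solution on $(0,T_{\max})\times\Omega$.

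The remaining, and most delicate, point is the blow-up alternative phrased in the $L^{\infty}$ norm rather than the natural $X^{\gamma}$ norm. The abstract theory directly yields the dichotomy: either $T_{\max}=\infty$, or $\limsup_{t\nearrow T_{\max}}\|U(t)\|_{X^{\gamma}}=+\infty$. To convert this into the stated $L^{\infty}$ criterion I would argue by contraposition: suppose $T_{\max}<\infty$ yet $\sup_{[0,T_{\max})}\left(\|u\|_{\infty}+\|v\|_{\infty}+\|w\|_{\infty}+\|z\|_{\infty}\right)=:M<\infty$. Then $F(U)$ is bounded in $X$, uniformly on $[0,T_{\max})$, by local boundedness of $F$ on the set $\{|U|\le M\}$; inserting this into the Duhamel formula together with the smoothing estimate $\|A^{\gamma}e^{-tA}\|\le C\,t^{-\gamma}e^{-\delta t}$ yields a uniform bound on $\|U(t)\|_{X^{\gamma}}$ up to $T_{\max}$, contradicting the $X^{\gamma}$ blow-up. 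Hence the $L^{\infty}$ sum must diverge as $t\nearrow T_{\max}$. I expect this last smoothing and bootstrap step, transferring the abstract $X^{\gamma}$ blow-up into a concrete $L^{\infty}$ statement, to be the main technical obstacle, since it is where the singular integrability of $t^{-\gamma}$ near $0$ and the admissible choice of $\gamma$ must be balanced carefully.
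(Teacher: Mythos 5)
Your proposal is correct, and it is essentially the argument the paper is relying on: the paper states this proposition without proof, citing it as well known (Henry, Pierre), after having set up exactly the hypotheses you verify --- the diagonal diffusion operator \eqref{(2.3)} generating an analytic semigroup and the local Lipschitz continuity of $(f,g,h,k)$ on bounded subsets of the domain of a fractional power of that operator. Your contraposition step converting the abstract $X^{\gamma}$ blow-up into the $L^{\infty}$ criterion is the standard one; the only minor refinement needed to get the full limit (rather than a $\limsup$) is to observe that the local existence time depends only on the norm of the data, so boundedness along any sequence $t_{n}\nearrow T_{\max}$ would already allow continuation past $T_{\max}$.
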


\section{\textbf{Global existence of strong solution}}
\label{3}
\subsection{\textbf{Results}}

Put $A_{12}$ $=\frac{a+b}{2\sqrt{ab}},A_{13}$ $=\frac{a+c}{2\sqrt{ac}},A_{14}
$ $=\frac{a+d}{2\sqrt{ad}},A_{23}$ $=\frac{b+c}{2\sqrt{bc}},A_{24}$ $=\frac{%
b+d}{2\sqrt{bd}},A_{34}$ $=\frac{c+d}{2\sqrt{cd}}$. Let $\theta ,\sigma $
and $\rho $ be three positive constants such that%
\begin{equation}
\label{(3.1)}
\theta ^{2}>A_{12}^{2},  
\end{equation}%

\begin{equation}
\label{(3.2)}
\left( \theta ^{2}-A_{12}^{2}\right) \left( \sigma ^{2}-A_{23}^{2}\right)
>\left( A_{13}-A_{12}A_{23}\right) ^{2},  
\end{equation}%
and%
\begin{equation}
\label{(3.3)}
\Lambda \cdot V>\Gamma ^{2} 
\end{equation}%
where%
\begin{eqnarray*}
\Lambda  &=&\left( \theta ^{2}-A_{12}^{2}\right) \left( \sigma
^{2}-A_{23}^{2}\right) -\left( A_{13}-A_{12}A_{23}\right) ^{2} \\
V &=&\left( \theta ^{2}-A_{12}^{2}\right) \left( \sigma ^{2}\rho
^{2}-A_{24}^{2}\right) -\left( A_{14}-A_{12}A_{24}\right) ^{2} \\
\Gamma  &=&\left( \theta ^{2}-A_{12}^{2}\right) \left( A_{34}\sigma
^{2}-A_{23}A_{24}\right) -\left( A_{13}-A_{12}A_{23}\right) \left(
A_{14}-A_{12}A_{24}\right) .
\end{eqnarray*}%
Let us define three positive sequences $\left\{ \theta _{r}\right\} _{r\geq
0},\ \left\{ \sigma _{q}\right\} _{q\geq 0}$ and $\left\{ \rho _{p}\right\}
_{p\geq 0}$ satisfying 
\begin{equation*}
\dfrac{\theta _{r}\theta _{r+2}}{\theta _{r+1}^{2}}=\theta ^{2},
\end{equation*}%
\begin{equation*}
\dfrac{\sigma _{q}\sigma _{q+2}}{\sigma _{q+1}^{2}}=\sigma ^{2},
\end{equation*}%
and 
\begin{equation*}
\dfrac{\rho _{p}\rho _{p+2}}{\rho _{p+1}^{2}}=\rho ^{2},\ 0\leq r\leq q\leq
p\leq n.
\end{equation*}

\begin{remark}
We can enforce some or all of the above sequences to be increasing, and others decreasing, by choosing%
\begin{equation*}
\frac{\theta _{r+1}}{\theta _{r}}=C_{\theta }\theta ^{2r},\ \frac{\sigma
_{q+1}}{\sigma _{q}}=C_{\sigma }\sigma ^{2q}\text{ and }\frac{\rho _{p+1}}{%
\rho _{p}}=C_{\rho }\rho ^{2p},\ 0\leq r\leq q\leq p\leq n,
\end{equation*}%
where the constants $C_{\theta },$ $C_{\sigma }$ and $C_{\rho }\ $ satisfy%
\begin{equation*}
C_{\theta },C_{\sigma } \text{ and } C_{\rho }<1, \ \text{or} \ C_{\rho } > 1 \ 0\leq r\leq q\leq p\leq n.
\end{equation*}
\end{remark}

Our first main result concerning the global existence of strong solutions, of the Brusselator evolutionary system, is as follows.

\begin{theorem}
\label{thm1}
Let $\left( u\left( t,.\right) ,v\left( t,.\right) ,w\left( t,.\right)
,z\left( t,.\right) \right) $ be a solution of \eqref{(1.1)}-\eqref{(1.5)}
and let 
\begin{equation}
\label{(3.4)}
L_{n}(t)=\int_{\Omega }H_{n}\left( u\left( t,x\right) ,v\left( t,x\right)
,w\left( t,x\right) ,z\left( t,x\right) \right) dx,  
\end{equation}%
where%

\begin{equation}
\label{(3.5)}
H_{n}\left( u,v,w,z\right) =\overset{n}{\underset{p=0}{\sum }}\overset{p}{%
\underset{q=0}{\sum }}\overset{q}{\underset{r=0}{\sum }}%
C_{n}^{p}C_{p}^{q}C_{q}^{r}\theta _{r}\sigma _{q}\rho
_{p}u^{r}v^{q-r}w^{p-q}z^{n-p},  
\end{equation}%
with $n$ a positive integer.\newline
Then the functional $L_{n}$ is uniformly bounded on the interval $[0,T_{\max
}].$
\end{theorem}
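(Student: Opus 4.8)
The plan is to establish a differential inequality of the form $L_n'(t)\le C_1 L_n(t)+C_2$ on $[0,T_{\max})$, from which the uniform bound on every finite subinterval follows by Gronwall's lemma. Differentiating \eqref{(3.4)} in $t$ and substituting \eqref{(1.1)}--\eqref{(1.4)} for $u_t,v_t,w_t,z_t$, I would write $L_n'(t)=I(t)+J(t)$, where $I$ gathers the diffusion contributions $\int_\Omega\big(a\,\partial_u H_n\,\Delta u+b\,\partial_v H_n\,\Delta v+c\,\partial_w H_n\,\Delta w+d\,\partial_z H_n\,\Delta z\big)\,dx$ and $J$ gathers the reaction contributions $\int_\Omega\big(\partial_u H_n\,f+\partial_v H_n\,g+\partial_w H_n\,h+\partial_z H_n\,k\big)\,dx$. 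The two targets are then $I\le 0$ and $J\le C_1 L_n+C_2$.

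For the diffusion term I would integrate each summand by parts. Since \eqref{(1.5)} forces $u=v=w=z=0$ on $\partial\Omega$ and $H_n$ is homogeneous of degree $n$ (take $n\ge 2$, the relevant range), every first partial $\partial_u H_n,\dots$ vanishes on $\partial\Omega$, so all boundary integrals drop and $I=-\int_\Omega Q\,dx$, where $Q$ is the quadratic form in $(\nabla u,\nabla v,\nabla w,\nabla z)$ with diagonal coefficients $a\,\partial^2_{uu}H_n,\,b\,\partial^2_{vv}H_n,\,c\,\partial^2_{ww}H_n,\,d\,\partial^2_{zz}H_n$ and symmetrized off-diagonal coefficients $\tfrac{a+b}{2}\partial^2_{uv}H_n,\tfrac{a+c}{2}\partial^2_{uw}H_n,\dots$ (the symmetrization $\tfrac{d_i+d_j}{2}$ is exactly what produces the constants $A_{ij}=\tfrac{d_i+d_j}{2\sqrt{d_id_j}}$). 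The crux is proving that the associated matrix $B$ is positive semidefinite pointwise on $\R_+^4$. Here I would exploit the binomial structure of $H_n$: after reindexing, each Hessian entry is again a positive combination of the degree-$(n-2)$ monomials $u^iv^jw^kz^l$ with coefficients built from $\theta_r,\sigma_q,\rho_p$, and the ratio relations $\theta_r\theta_{r+2}=\theta^2\theta_{r+1}^2$ (and the analogues for $\sigma,\rho$) convert Cauchy--Schwarz into pointwise estimates such as $\partial^2_{uu}H_n\,\partial^2_{vv}H_n\ge\theta^2(\partial^2_{uv}H_n)^2$. Combined with $\theta^2>A_{12}^2$ this yields the first leading minor, and \eqref{(3.2)}, \eqref{(3.3)} are precisely the next two Sylvester conditions; verifying all four leading principal minors of $B$ are nonnegative is the technical heart of the proof.

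For the reaction term I would insert $f,g,h,k$ and sort by degree. The only contributions of degree exceeding $n$ come from the cubic terms $\pm u^2v,\pm w^2z$, which assemble into the homogeneous degree-$(n+2)$ expression $\int_\Omega\big(u^2v(\partial_u H_n-\partial_v H_n)+w^2z(\partial_w H_n-\partial_z H_n)\big)dx$. A direct coefficient computation shows that each monomial of $\partial_u H_n-\partial_v H_n$ carries a positive combinatorial factor times the increment $(\theta_{i+1}-\theta_i)$, and that $\partial_w H_n-\partial_z H_n$ similarly carries the factor $(\rho_{p+1}-\rho_p)$; choosing the monotonicity of the sequences as permitted by the Remark so that these increments have the correct (nonpositive) sign renders this entire top-order term nonpositive, and it is discarded. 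Every surviving contribution — from the source $\alpha$, the linear terms, and the linear coupling $D_i(\cdot-\cdot)$ — is a polynomial of degree at most $n$. Using that $H_n\ge c\,(u^n+v^n+w^n+z^n)$ for some $c>0$, because the four pure powers occur in $H_n$ with positive coefficients, together with weighted AM--GM / Young inequalities, each such term is bounded pointwise by $C_1 H_n+C_2$, giving $J\le C_1 L_n+C_2$ after integration.

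Combining the two estimates yields $L_n'\le C_1 L_n+C_2$, and Gronwall's inequality gives $L_n(t)\le e^{C_1 t}\big(L_n(0)+C_2/C_1\big)$, which is finite on every bounded interval and hence uniformly bounded on $[0,T_{\max}]$. I expect the positive-definiteness reduction of the variable-coefficient matrix $B$ to the three scalar conditions \eqref{(3.1)}--\eqref{(3.3)} to be by far the most delicate step, since it rests on the exact binomial and Cauchy--Schwarz bookkeeping; by contrast, the reaction estimate is essentially degree counting once the sign of the degree-$(n+2)$ part is settled.
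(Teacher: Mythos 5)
Your proposal follows essentially the same route as the paper: the same splitting $L_n'=I+J$, the same integration by parts reducing $I$ to quadratic forms whose positivity is checked by the Sylvester criterion under \eqref{(3.1)}--\eqref{(3.3)}, the same use of decreasing ratios $\theta_{r+1}/\theta_r<1$, $\rho_{p+1}/\rho_p<1$ to make the degree-$(n+2)$ contributions $u^2v(\partial_uH_n-\partial_vH_n)+w^2z(\partial_wH_n-\partial_zH_n)$ nonpositive, and a Gronwall-type conclusion. The only cosmetic difference is that the paper closes with $L_n'\le C_5L_n+C_7L_n^{(n-1)/n}$ and the substitution $Z=L_n^{1/n}$, whereas you absorb the lower-order term by Young's inequality into a linear inequality $L_n'\le C_1L_n+C_2$; these are equivalent.
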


\begin{corollary}
\label{cor1}
All solutions of \eqref{(1.1)}-\eqref{(1.5)} with positive initial data in $%
L^{\infty }\left( \Omega \right) $ are in $L^{\infty }$ $\left( 0,T_{\max
};L^{n}\left( \Omega \right) \right) $ for all $n\geq 1.$
\end{corollary}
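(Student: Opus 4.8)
The plan is to deduce the Corollary directly from Theorem \ref{thm1}, treating the uniform boundedness of $L_n(t)$ as a black box. The entire content of the Corollary is that the algebraic polynomial $H_n$, whose integral is $L_n$, \emph{dominates} a positive multiple of $u^n+v^n+w^n+z^n$ pointwise on $\mathbb{R}_+^4$; once this domination is established, integrating it and invoking the theorem finishes the argument. So the core of the proof is a pointwise algebraic lower bound for $H_n$, not any PDE estimate.

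First I would exhibit $H_n$ as a positive definite form in the monomials $u^r v^{q-r} w^{p-q} z^{n-p}$. Because each coefficient $C_n^p C_p^q C_q^r \theta_r \sigma_q \rho_p$ is strictly positive (the binomial coefficients are positive and the sequences $\{\theta_r\},\{\sigma_q\},\{\rho_p\}$ were defined to be positive), every monomial appearing in \eqref{(3.5)} carries a \emph{strictly positive} coefficient. In particular the four ``pure power'' monomials $u^n$ (the term $p=q=r=n$), $v^n$ (the term $p=q=n$, $r=0$), $w^n$ (the term $p=n$, $q=0$), and $z^n$ (the term $p=q=r=0$) all occur in $H_n$ with positive coefficients, say $c_u=\theta_n$, $c_v=\sigma_n$, $c_w=\rho_n$, $c_z=\theta_0\sigma_0\rho_0$. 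Since all the remaining monomials are products of nonnegative quantities (the solution is nonnegative on $[0,T_{\max})$ by the invariant-region argument recalled in the introduction) with positive coefficients, we have the pointwise bound
\begin{equation*}
H_n(u,v,w,z)\ \geq\ c_u\,u^n + c_v\,v^n + c_w\,w^n + c_z\,z^n\ \geq\ c_0\left(u^n+v^n+w^n+z^n\right),
\end{equation*}
valid for all $u,v,w,z\geq 0$, where $c_0=\min\{c_u,c_v,c_w,c_z\}>0$.

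Next I would integrate this inequality over $\Omega$. Using the definition \eqref{(3.4)} of $L_n$ and the norm notation \eqref{(2.1)}, integration gives
\begin{equation*}
L_n(t)\ =\ \int_\Omega H_n\,dx\ \geq\ c_0\int_\Omega\!\left(u^n+v^n+w^n+z^n\right)dx\ =\ c_0\left(\|u\|_n^n+\|v\|_n^n+\|w\|_n^n+\|z\|_n^n\right).
\end{equation*}
By Theorem \ref{thm1}, $L_n(t)\leq M_n$ for some constant $M_n$ independent of $t\in[0,T_{\max})$, so each of $\|u(t,\cdot)\|_n^n,\ldots,\|z(t,\cdot)\|_n^n$ is bounded by $M_n/c_0$ uniformly in $t$. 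Taking the supremum over $t$ shows $u,v,w,z\in L^\infty(0,T_{\max};L^n(\Omega))$, which is exactly the assertion of the Corollary for the given $n$. Since the construction of $H_n$ is available for every positive integer $n$, the conclusion holds for all $n\geq1$.

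The only genuinely delicate point — and hence the main obstacle — is verifying that the four pure-power monomials really appear in $H_n$ with the claimed positive coefficients and that no cancellation can occur; this rests entirely on the \emph{positivity} of the coefficient weights and of the solution components. Positivity of the weights is immediate from their definition, while nonnegativity of $(u,v,w,z)$ is precisely the invariant-region property asserted in the introduction via the sign conditions $f(0,v,w,z)\geq0$, etc. Given these two facts the domination is automatic, so no further analysis is needed and the remaining steps are purely mechanical. I would also remark that while the cases $1\leq n<N/2$ are not directly what drives global existence, they follow by the same argument (or by interpolation against the bounded case $n\geq N/2$), so stating the result for all $n\geq1$ costs nothing.
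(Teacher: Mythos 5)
Your proposal is correct and takes essentially the same route as the paper: the paper's own proof rests on the equivalent pointwise domination $\int_{\Omega}(u+v+w+z)^{n}dx\leq C_{9}L_{n}(t)$ combined with Theorem \ref{thm1}, which is the same idea as your lower bound $H_{n}\geq c_{0}(u^{n}+v^{n}+w^{n}+z^{n})$ obtained by retaining the pure-power monomials and using nonnegativity of the solution. (One trivial slip: the pure-power coefficients are $c_{u}=\theta_{n}\sigma_{n}\rho_{n}$, $c_{v}=\theta_{0}\sigma_{n}\rho_{n}$, $c_{w}=\theta_{0}\sigma_{0}\rho_{n}$ and $c_{z}=\theta_{0}\sigma_{0}\rho_{0}$, not the ones you wrote, but all are strictly positive so your argument is unaffected.)
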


\begin{proposition}
\label{prop1}
\label{global} All solutions of \eqref{(1.1)}-\eqref{(1.5)} with positive
initial data in $L^{\infty }\left( \Omega \right)$ are global.
\end{proposition}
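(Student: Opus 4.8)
The plan is to reduce global existence to a uniform $L^p$ control of the reaction terms and then feed the $L^n$ estimates already in hand into that reduction. Recall from the local existence proposition of Section \ref{2} that the solution exists on a maximal interval $(0,T_{\max})$, and that if $T_{\max}<\infty$ then $\|u(t,\cdot)\|_{\infty}+\|v(t,\cdot)\|_{\infty}+\|w(t,\cdot)\|_{\infty}+\|z(t,\cdot)\|_{\infty}\to\infty$ as $t\nearrow T_{\max}$. As observed at the start of Section \ref{2} (see \cite{henry}), to rule out this blow-up it suffices to produce, for some exponent $p>N/2$, a uniform bound on $\|f\|_{p}$, $\|g\|_{p}$, $\|h\|_{p}$, $\|k\|_{p}$ over $[0,T_{\max})$; the regularizing effect of the analytic semigroup generated by the diagonal operator in \eqref{(2.3)} then upgrades these into a uniform $L^{\infty}$ bound on each component, contradicting the blow-up alternative unless $T_{\max}=\infty$. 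So I would argue by contradiction: assume $T_{\max}<\infty$ and derive the forbidden uniform $L^{\infty}$ bound.

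The key observation is that each reaction term is a polynomial of degree at most three, whose only genuinely nonlinear contributions are the cubic monomials $u^{2}v$ (appearing in $f$ and, with opposite sign, in $g$) and $w^{2}z$ (appearing in $h$ and $k$); every remaining term is constant or linear in $(u,v,w,z)$ and is therefore trivially controlled in $L^{p}$ by the $L^{p}$ norms of the components. Fixing once and for all an exponent $p>N/2$, I would estimate the cubic terms by H\"older's inequality with conjugate exponents $3/2$ and $3$:
\begin{equation*}
\|u^{2}v\|_{p}\le\|u\|_{3p}^{2}\,\|v\|_{3p},\qquad \|w^{2}z\|_{p}\le\|w\|_{3p}^{2}\,\|z\|_{3p},
\end{equation*}
so that $L^{3p}$ control of the components yields $L^{p}$ control of the worst nonlinearities.

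At this point Corollary \ref{cor1} supplies exactly what is needed: since it guarantees that the solution lies in $L^{\infty}(0,T_{\max};L^{n}(\Omega))$ for every $n\ge1$, taking $n=3p$ gives
\begin{equation*}
\sup_{0\le t<T_{\max}}\big(\|u(t)\|_{3p}+\|v(t)\|_{3p}+\|w(t)\|_{3p}+\|z(t)\|_{3p}\big)<\infty,
\end{equation*}
which in turn bounds $\|u^{2}v\|_{p}$ and $\|w^{2}z\|_{p}$, and hence $\|f\|_{p}$, $\|g\|_{p}$, $\|h\|_{p}$, $\|k\|_{p}$, uniformly on $[0,T_{\max})$. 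This is precisely the hypothesis of the regularizing-effect criterion, so the components remain uniformly bounded in $L^{\infty}$, contradicting finite-time blow-up. Therefore $T_{\max}=\infty$ and every solution is global.

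Since the substantive analytic work, namely the construction of the polynomial Lyapunov functional $H_{n}$ and the proof that $L_{n}$ stays bounded, is already carried out in Theorem \ref{thm1} and packaged into the $L^{n}$ estimates of Corollary \ref{cor1}, I expect no serious obstacle in the deduction above. The only points requiring mild care are bookkeeping: verifying that $p>N/2$ is always admissible (immediate, since Corollary \ref{cor1} holds for all $n\ge1$, so $3p$ is available in every space dimension $N$), matching the H\"older exponents correctly, and invoking the regularizing effect in the form valid for the operator \eqref{(2.3)} under the Dirichlet conditions \eqref{(1.5)}. The conceptual heart of the matter lies entirely in Theorem \ref{thm1}, and this proposition is its clean corollary.
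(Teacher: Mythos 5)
Your proposal is correct and follows essentially the same route as the paper: both proofs use the cubic degree of the reaction terms together with the $L^{n}$ bounds of Corollary \ref{cor1} (for all $n\ge 1$) to obtain uniform $L^{p}$ control of $f,g,h,k$ for some $p>N/2$, and then invoke the regularizing-effect criterion from the preliminary observations to conclude $T_{\max}=\infty$. You merely spell out the H\"older step and the contradiction with the blow-up alternative more explicitly than the paper does.
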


\subsection{Proofs}

For the proof of Theorem \ref{thm1}, we need some preparatory Lemmas whose proofs are similar to those in \cite{K02}.
 Let us denote $H_{n}$ in \eqref{(3.5)} by $%
H_{n}\left( \theta _{r}\sigma _{q}\rho _{p}\right) $, then we have

\begin{lemma}
Let $H_{n}$ be the homogeneous polynomial defined by \eqref{(3.5)}. Then the first partial derivatives of $H_{n}$ are given by \newline
\begin{equation}
\left. 
\begin{array}{c}
\partial _{u}H_{n}=nH_{n-1}\left( \theta _{r+1}\sigma _{q+1}\rho
_{p+1}\right) ,\ \partial _{v}H_{n}=nH_{n-1}\left( \theta _{r}\sigma
_{q+1}\rho _{p+1}\right) , \\ 
\ \partial _{w}H_{n}=nH_{n-1}\left( \theta _{r}\sigma _{q}\rho _{p+1}\right)
,\ \partial _{z}H_{n}=nH_{n-1}\left( \theta _{r}\sigma _{q}\rho _{p}\right) 
\end{array}%
\right.   \tag{4.1}
\end{equation}%
\newline
\end{lemma}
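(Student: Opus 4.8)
The plan is to prove all four identities by the same direct, termwise differentiation, the only inputs being a single factorial identity together with careful reindexing of the triple sum. First I would observe that the three nested binomial coefficients collapse into a single multinomial coefficient,
\[
C_n^p C_p^q C_q^r = \frac{n!}{r!\,(q-r)!\,(p-q)!\,(n-p)!},
\]
valid on the index range $0\le r\le q\le p\le n$. This is the structural fact that makes the claimed derivative formulas transparent: each monomial $u^r v^{q-r} w^{p-q} z^{n-p}$ in $H_n$ carries the weight $\theta_r\sigma_q\rho_p$ times this multinomial coefficient, and differentiation acts on a single exponent at a time.

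Next I would differentiate $H_n$ termwise with respect to $u$. Only the terms with $r\ge 1$ survive, and differentiation of $u^r$ brings down a factor $r$, which I absorb using
\[
r\cdot\frac{n!}{r!\,(q-r)!\,(p-q)!\,(n-p)!} = n\cdot\frac{(n-1)!}{(r-1)!\,(q-r)!\,(p-q)!\,(n-p)!}.
\]
Setting $r'=r-1,\ q'=q-1,\ p'=p-1$, I would check that the surviving sum runs exactly over $0\le r'\le q'\le p'\le n-1$, that the multinomial coefficient becomes $C_{n-1}^{p'}C_{p'}^{q'}C_{q'}^{r'}$, and that the monomial becomes $u^{r'}v^{q'-r'}w^{p'-q'}z^{(n-1)-p'}$. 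The accompanying weight satisfies $\theta_r\sigma_q\rho_p=\theta_{r'+1}\sigma_{q'+1}\rho_{p'+1}$, so the whole expression is precisely $n\,H_{n-1}(\theta_{r+1}\sigma_{q+1}\rho_{p+1})$, establishing the first identity.

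The remaining three identities follow the same template, the factor pulled down by differentiation being $(q-r)$, $(p-q)$, or $(n-p)$ respectively, and the reindexing adapted accordingly. Differentiating $v$ forces $r'=r$, $q'=q-1$, $p'=p-1$, so the weight $\theta_r\sigma_q\rho_p=\theta_{r'}\sigma_{q'+1}\rho_{p'+1}$ shifts only $\sigma,\rho$; differentiating $w$ forces $r'=r$, $q'=q$, $p'=p-1$, shifting only $\rho$; and differentiating $z$ forces $r'=r$, $q'=q$, $p'=p$, shifting none of the three families, so the weight stays $\theta_r\sigma_q\rho_p$. In each case the factorial identity $k\cdot\frac{n!}{\cdots k!\cdots}=n\cdot\frac{(n-1)!}{\cdots(k-1)!\cdots}$ converts the pulled-down factor $k$ into the prefactor $n$ while lowering exactly one entry of the multinomial coefficient.

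The main obstacle I anticipate is purely bookkeeping: verifying that after each shift the triple summation limits realign to exactly $0\le r'\le q'\le p'\le n-1$ — that is, that the nested constraint $r\le q\le p$ is preserved and no spurious boundary terms are created or lost — and tracking precisely which of the three index families $\{\theta_r\},\{\sigma_q\},\{\rho_p\}$ gets shifted for each of the four variables. Once the correspondence of summation limits and weights is pinned down, each identity is an immediate consequence of the factorial identity, and no further analytic machinery is needed.
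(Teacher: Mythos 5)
Your proof is correct: the collapse of $C_n^pC_p^qC_q^r$ into the multinomial coefficient $\tfrac{n!}{r!(q-r)!(p-q)!(n-p)!}$, the factorial identity absorbing the pulled-down exponent into the prefactor $n$, and the four reindexings (shifting $(r,q,p)$, $(q,p)$, $(p)$, or nothing, with the corresponding shifts of $\theta,\sigma,\rho$) all check out, and the surviving index sets do map bijectively onto $0\le r'\le q'\le p'\le n-1$ in each case. The paper omits the proof and defers to its reference \cite{K02}, but this direct termwise differentiation is exactly the intended argument, so no further comparison is needed.
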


\begin{lemma}
The second partial derivatives of $H_{n}$ are given by%
\begin{equation}
\left. 
\begin{array}{c}
\partial _{u^{2}}H_{n}=n\left( n-1\right) H_{n-2}\left( \theta _{r+2}\sigma
_{q+2}\rho _{p+2}\right) ,\ \partial _{uv}H_{n}=n\left( n-1\right)
H_{n-2}\left( \theta _{r+1}\sigma _{q+2}\rho _{p+2}\right) , \\ 
\partial _{uw}H_{n}=n\left( n-1\right) H_{n-2}\left( \theta _{r+1}\sigma
_{q+1}\rho _{p+2}\right) ,\ \partial _{uz}H_{n}=n\left( n-1\right)
H_{n-2}\left( \theta _{r+1}\sigma _{q+1}\rho _{p+1}\right) , \\ 
\partial _{v^{2}}H_{n}=n\left( n-1\right) H_{n-2}\left( \theta _{r}\sigma
_{q+2}\rho _{p+2}\right) ,\partial _{vw}H_{n}=n\left( n-1\right)
H_{n-2}\left( \theta _{r}\sigma _{q+1}\rho _{p+2}\right) , \\ 
\partial _{vz}H_{n}=n\left( n-1\right) H_{n-2}\left( \theta _{r}\sigma
_{q+1}\rho _{p+1}\right) ,\ \partial _{w^{2}}H_{n}=n\left( n-1\right)
H_{n-2}\left( \theta _{r}\sigma _{q}\rho _{p+2}\right) , \\ 
\partial _{wz}H_{n}=n\left( n-1\right) =n\left( n-1\right) H_{n-2}\theta
_{r}\sigma _{q}\rho _{p+1},\ \partial _{z^{2}}H_{n}=n\left( n-1\right)
H_{n-2}\left( \theta _{r}\sigma _{q}\rho _{p}\right) \ 
\end{array}%
\right.   \tag{4.2}
\end{equation}
\end{lemma}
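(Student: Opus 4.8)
The plan is to obtain every entry of (4.2) by applying the first-order formulas (4.1) a second time, so the only real content is to record that those formulas remain valid when the coefficient indices are shifted. To make this precise, for nonnegative integers $a,b,c$ I write
\[
H_{m}(\theta_{r+a}\sigma_{q+b}\rho_{p+c}) = \sum_{p=0}^{m}\sum_{q=0}^{p}\sum_{r=0}^{q} C_{m}^{p}C_{p}^{q}C_{q}^{r}\,\theta_{r+a}\sigma_{q+b}\rho_{p+c}\,u^{r}v^{q-r}w^{p-q}z^{m-p},
\]
so that $H_n = H_n(\theta_r\sigma_q\rho_p)$ is the case $a=b=c=0$, $m=n$. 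The key observation is that differentiation acts only on the monomial factor $u^{r}v^{q-r}w^{p-q}z^{m-p}$: for each index triple $(r,q,p)$ the quantities $\theta_{r+a},\sigma_{q+b},\rho_{p+c}$ are fixed scalars. Hence the computation establishing (4.1) for $H_n$ carries over verbatim to the shifted family and yields the recursion
\[
\partial_u H_m(\theta_{r+a}\sigma_{q+b}\rho_{p+c}) = m\,H_{m-1}(\theta_{r+a+1}\sigma_{q+b+1}\rho_{p+c+1}),
\]
and likewise $\partial_v$ raises the $\sigma$ and $\rho$ indices by one, $\partial_w$ raises only the $\rho$ index, and $\partial_z$ raises none, each time producing the degree factor $m$ and lowering the degree to $m-1$.

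With this recursion in hand, the ten second derivatives follow from a single further differentiation of (4.1). For instance, starting from $\partial_u H_n = n H_{n-1}(\theta_{r+1}\sigma_{q+1}\rho_{p+1})$ and differentiating again in $u$ gives $\partial_{u^2}H_n = n(n-1)H_{n-2}(\theta_{r+2}\sigma_{q+2}\rho_{p+2})$; differentiating instead in $z$ leaves all indices untouched and yields $\partial_{uz}H_n = n(n-1)H_{n-2}(\theta_{r+1}\sigma_{q+1}\rho_{p+1})$. Similarly, $\partial_{wz}H_n$ is obtained by applying $\partial_z$ (no shift) to $\partial_w H_n = n H_{n-1}(\theta_r\sigma_q\rho_{p+1})$, giving $n(n-1)H_{n-2}(\theta_r\sigma_q\rho_{p+1})$. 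Carrying out the remaining seven cases in exactly the same fashion reproduces every line of (4.2).

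Since $H_n$ is a polynomial it is smooth, so its mixed second partials are symmetric; this furnishes a convenient consistency check. Computing $\partial_{uv}H_n$ as $\partial_u(\partial_v H_n)$ and as $\partial_v(\partial_u H_n)$ must agree, and indeed both routes land on $n(n-1)H_{n-2}(\theta_{r+1}\sigma_{q+2}\rho_{p+2})$. I expect no genuine obstacle: the entire argument is bookkeeping, and the one point demanding care is the first, namely confirming that the index shifts in the coefficient sequences are inert under differentiation, so that (4.1) may legitimately be reapplied to an already-shifted polynomial.

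\findem
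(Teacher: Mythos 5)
Your proof is correct. The paper itself gives no proof of this lemma (it defers to the analogous computations in \cite{K02}), and your argument --- observing that the coefficient shifts $\theta_{r+a}\sigma_{q+b}\rho_{p+c}$ are inert under differentiation, so that the first-order formulas (4.1) apply verbatim to the shifted family and can simply be iterated --- is precisely the intended bookkeeping, and it correctly reproduces all ten entries of (4.2) (the duplicated ``$n(n-1)=$'' in the $\partial_{wz}H_n$ line of the statement is a typo in the paper, not a discrepancy with your result).
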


\begin{lemma}
Let $A$ be the symmetric matrix defined by

\begin{equation}
A=\left[ 
\begin{array}{cccc}
a_{11} & a_{12} & a_{13} & a_{14} \\ 
a_{12} & a_{22} & a_{23} & a_{24} \\ 
a_{13} & a_{23} & a_{33} & a_{34} \\ 
a_{14} & a_{24} & a_{34} & a_{44}%
\end{array}%
\right]
\end{equation}

then 
\begin{equation*}
a_{11}^{2}\left( a_{11}a_{22}-a_{12}^{2}\right) \det A=\left( PQ-R^{2}\right)
\end{equation*}%
where 
\begin{eqnarray*}
P &=&\left( a_{11}a_{22}-a_{12}^{2}\right) \left(
a_{11}a_{33}-a_{13}^{2}\right) -\left( a_{11}a_{23}-a_{12}a_{13}\right) ^{2},
\\
Q &=&\left( a_{11}a_{22}-a_{12}^{2}\right) \left(
a_{11}a_{44}-a_{14}^{2}\right) -\left( a_{11}a_{24}-a_{12}a_{14}\right) ^{2};
\\
R &=&\left( a_{11}a_{22}-a_{12}^{2}\right) \left(
a_{11}a_{34}-a_{13}a_{14}\right) -\left( a_{11}a_{23}-a_{12}a_{13}\right)
\left( a_{11}a_{24}-a_{12}a_{14}\right) .
\end{eqnarray*}
\end{lemma}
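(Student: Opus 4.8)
The plan is to avoid expanding the degree-six polynomial identity directly and instead recognize it as a nested (two-level) application of the Schur-complement determinant formula, equivalently Sylvester's determinant identity. First I would introduce the bordered $2\times2$ minors of $A$ about the $(1,1)$ entry: for $i,j\in\{2,3,4\}$ set
\begin{equation*}
b_{ij}=a_{11}a_{ij}-a_{1i}a_{1j}=\det\begin{pmatrix} a_{11} & a_{1j} \\ a_{1i} & a_{ij}\end{pmatrix},
\end{equation*}
using the symmetry $a_{i1}=a_{1i}$. A direct inspection then shows $a_{11}a_{22}-a_{12}^2=b_{22}$ and that the quantities in the statement are exactly $P=b_{22}b_{33}-b_{23}^2$, $Q=b_{22}b_{44}-b_{24}^2$, and $R=b_{22}b_{34}-b_{23}b_{24}$. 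Hence
\begin{equation*}
PQ-R^2=\det\begin{pmatrix} P & R \\ R & Q \end{pmatrix}
\end{equation*}
is itself the determinant of the $2\times2$ matrix of bordered minors of the symmetric $3\times3$ matrix $B=(b_{ij})_{i,j\in\{2,3,4\}}$ taken about its leading entry $b_{22}$.

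With this reformulation the identity splits into two clean steps, each an instance of the elementary identity $\det\bigl(\begin{smallmatrix} M_{11} & M_{12}\\ M_{21} & M_{22}\end{smallmatrix}\bigr)=\det(M_{11})\det\!\bigl(M_{22}-M_{21}M_{11}^{-1}M_{12}\bigr)$ for an invertible leading block. Applied to $A$ with the $1\times1$ pivot $a_{11}$, the Schur complement $S$ of $a_{11}$ satisfies $b_{ij}=a_{11}s_{ij}$, so $B=a_{11}S$ and therefore $\det B=a_{11}^3\det S=a_{11}^3(\det A/a_{11})=a_{11}^2\det A$. Applied to $B$ with the $1\times1$ pivot $b_{22}$, the Schur complement $T$ of $b_{22}$ satisfies $\bigl(\begin{smallmatrix} P & R\\ R & Q\end{smallmatrix}\bigr)=b_{22}T$, whence $PQ-R^2=b_{22}^2\det T=b_{22}^2(\det B/b_{22})=b_{22}\det B$. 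Chaining the two,
\begin{equation*}
PQ-R^2=b_{22}\det B=b_{22}\,a_{11}^2\det A=\left(a_{11}a_{22}-a_{12}^2\right)a_{11}^2\det A,
\end{equation*}
which is precisely the claimed equality. To keep the argument self-contained I would, if needed, record the single underlying $3\times3$ fact once — that for a symmetric $(c_{ij})$ one has $c_{11}\det C=(c_{11}c_{22}-c_{12}^2)(c_{11}c_{33}-c_{13}^2)-(c_{11}c_{23}-c_{12}c_{13})^2$ — by a short cofactor expansion.

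The only real obstacle is that the Schur-complement reductions divide by the pivots $a_{11}$ and $b_{22}$, so the derivation above is literally valid only on the set where $a_{11}\neq0$ and $b_{22}\neq0$. Since both sides of the asserted equation are polynomials in the entries $a_{ij}$, I would finish by a continuity / polynomial-identity (Zariski density) argument: two polynomials agreeing on the dense open set where the pivots are nonzero must agree identically. This disposes of the degenerate cases $a_{11}=0$ or $b_{22}=0$ without any further computation.
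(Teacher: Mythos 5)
Your proof is correct. Note first that the paper itself supplies no argument for this lemma: it is listed among the ``preparatory Lemmas whose proofs are similar to those in \cite{K02}'', so there is no in-text proof to match against. Your route --- recognizing $P$, $Q$, $R$ as the bordered minors $b_{22}b_{33}-b_{23}^{2}$, $b_{22}b_{44}-b_{24}^{2}$, $b_{22}b_{34}-b_{23}b_{24}$ of the matrix $B=(b_{ij})$ of bordered minors of $A$ about $a_{11}$, and then applying Sylvester's determinant identity (equivalently the Schur-complement factorization) twice, once to get $\det B=a_{11}^{2}\det A$ and once to get $PQ-R^{2}=b_{22}\det B$ --- is a clean, fully self-contained derivation, and your disposal of the degenerate pivots $a_{11}=0$ or $b_{22}=0$ by the polynomial-identity/density argument is exactly the right way to close that gap. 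The only thing each side ``buys'': the paper's citation keeps the exposition short at the cost of leaving the reader to chase the reference (where the analogous identities are typically verified by direct expansion of the leading principal minors), whereas your argument makes transparent \emph{why} the identity holds and why the same mechanism controls $\Delta_{rqp}^{4}$ in the Sylvester positivity check used in the proof of Theorem \ref{thm1}; it also generalizes immediately to larger symmetric matrices, which the brute-force expansion does not. One small presentational point: when you write $B=a_{11}S$ you should make explicit that $S$ is the $3\times3$ Schur complement so that $\det(a_{11}S)=a_{11}^{3}\det S$, which you do --- just be sure the symmetry hypothesis $a_{i1}=a_{1i}$ is invoked where you identify $b_{ij}$ with the displayed $2\times2$ determinant, since otherwise the bordered minor would be $a_{11}a_{ij}-a_{i1}a_{1j}$ rather than $a_{11}a_{ij}-a_{1i}a_{1j}$.
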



\begin{proof}[Proof of Theorem \ref{thm1}]
Differentiating $L_{n}$ with respect to $t$ yields%
\begin{eqnarray*}
L_{n}^{\prime }(t) &=&\dint\limits_{\Omega }\left( \partial _{u}H_{n}\dfrac{%
\partial u}{\partial t}+\partial _{v}H_{n}\dfrac{\partial v}{\partial t}%
+\partial _{w}H_{n}\dfrac{\partial w}{\partial t}+\partial _{z}H_{n}\dfrac{%
\partial z}{\partial t}\right) dx \\
&=&\dint\limits_{\Omega }\left( a\partial _{u}H_{n}\Delta u+b\partial
_{v}H_{n}\Delta v+c\partial _{w}H_{n}\Delta w+d\partial _{z}H_{n}\Delta
z\right) dx \\
&&+\dint\limits_{\Omega }\left( f\partial _{u}H_{n}+g\partial
_{v}H_{n}+h\partial _{w}H_{n}+k\partial _{z}H_{n}\right) dx \\
&=&I+J.
\end{eqnarray*}

Using Green's formula and boundary conditions via \eqref{(1.5)}, and applying lemma 1 we
get%
\begin{equation}
\label{(4.18)}
I=-n\left( n-1\right) \dint\limits_{\Omega }\overset{n-2}{\underset{p=0}{%
\sum }}\overset{p}{\underset{q=0}{\sum }}\overset{q}{\underset{r=0}{\sum }%
}C_{n-2}^{p}C_{p}^{q}C_{q}^{r}\left( B_{rqp}T\right) \cdot
Tu^{r}v^{q-r}w^{p-q}z^{\left( n-2\right) -p}dx, 
\end{equation}%
where $\left\{ B_{rqp}\right\} ,\ r=\overline{0,q},q=\overline{0,p},$ $p=%
\overline{0,n-2}$ \ are the matrices defined by 
\begin{equation}
\label{mat}
B_{rqp}=\left( 
\begin{array}{cccc}
a\rho _{p+2}\sigma _{q+2}\theta _{r+2} & \frac{a+b}{2}\rho _{p+2}\sigma
_{q+2}\theta _{r+1} & \frac{a+c}{2}\rho _{p+2}\sigma _{q+1}\theta _{r+1} & 
\frac{a+d}{2}\rho _{p+1}\sigma _{q+1}\theta _{r+1} \\ 
\frac{a+b}{2}\rho _{p+2}\sigma _{q+2}\theta _{r+1} & b\rho _{p+2}\sigma
_{q+2}\theta _{r} & \frac{b+c}{2}\rho _{p+2}\sigma _{q+1}\theta _{r} & \frac{%
b+d}{2}\rho _{p+1}\sigma _{q+1}\theta _{r} \\ 
\frac{a+c}{2}\rho _{p+2}\sigma _{q+1}\theta _{r+1} & \frac{a+d}{2}\rho
_{p+1}\sigma _{q+1}\theta _{r+1} & c\rho _{p+2}\sigma _{q}\theta _{r} & 
\frac{c+d}{2}\rho _{p+1}\sigma _{q}\theta _{r} \\ 
\frac{b+c}{2}\rho _{p+2}\sigma _{q+1}\theta _{r} & \frac{b+d}{2}\rho
_{p+1}\sigma _{q+1}\theta _{r} & \frac{c+d}{2}\rho _{p+1}\sigma _{q}\theta
_{r} & d\rho _{p}\sigma _{q}\theta _{r}%
\end{array}%
\right) ,
\end{equation}%
and $T$ denotes the transposate vector%
\begin{equation*}
T=\left( \nabla u,\nabla v,\nabla w,\nabla z\right) ^{t}.
\end{equation*}%
\newline
From the Sylvester criterion \cite{M00}, each of the quadratic forms (with respect to $%
\nabla u$, $\nabla v,$ $\nabla w$ and $\nabla z$) associated with the
matrices $B_{rqp}$, $r=\overline{0,q},q=\overline{0,p},$ $p=\overline{0,n-2}$
is positive, if we prove the positivity of its main determinants $\Delta
_{rqp}^{j},\ j=1,2,3$ and $4$.
For fixed $r=\overline{0,q},q=\overline{0,p},$ and $p=\overline{0,n-2}$, we
see that%
\begin{equation*}
\Delta _{rqp}^{1}=a\rho _{p+2}\sigma _{q+2}\theta _{r+2}>0,
\end{equation*}
and condition \eqref{(3.1)} implies%
\begin{equation*}
\Delta _{rqp}^{2}=ab\rho _{p+2}^{2}\sigma _{q+2}^{2}\theta _{r+1}^{2}\left(
\theta ^{2}-A_{12}^{2}\right) >0.
\end{equation*}
We can show by elementary calculation that 
\begin{equation*}
\Delta _{rqp}^{3}=abc\rho _{p+2}^{3}\sigma _{q+2}\sigma _{q+1}^{2}\theta
_{r+1}^{2}\theta _{r}\left[ \left( \theta ^{2}-A_{12}^{2}\right) \left(
\sigma ^{2}-A_{23}^{2}\right) -\left( A_{13}-A_{12}A_{23}\right) ^{2}\right]
,
\end{equation*}
and this yields \eqref{(3.2)} $\Delta _{rqp}^{3}>0.$ For the last determinant,
we use lemma 3 to obtain 
\begin{equation*}
\left( \theta ^{2}-A_{12}^{2}\right) \Delta _{rqp}^{4}=abcd\rho
_{p+2}^{2}\rho _{p+1}^{2}\sigma _{q+1}^{4}\theta _{r+1}^{2}\theta
_{r}^{2}\left( \Lambda V-\Gamma ^{2}\right)
\end{equation*}
which implies, from \eqref{(3.3)}, $\Delta _{rqp}^{4}>0$. Consequently we have $%
I\leq 0$.

Substituting the expressions of the partial derivatives given by lemma 1 in
the second integral, yields
\begin{equation}
\label{J1}
J=n\underset{\Omega }{\int }\overset{n-1}{\underset{p=0}{\sum }}\overset{p}{%
\underset{q=0}{\sum }}\overset{q}{\underset{r=0}{\sum }}%
C_{n-1}^{p}C_{p}^{q}C_{q}^{r}\left[ \frac{\rho _{p+1}}{\rho _{p}}\frac{%
\sigma _{q+1}}{\sigma _{q}}\left( \frac{\theta _{r+1}}{\theta _{r}}%
f+g\right) +\frac{\rho _{p+1}}{\rho _{p}}h+k\right] \rho _{p}\sigma
_{q}\theta _{r}u^{r}v^{q-r}w^{p-q}z^{\left( n-1\right) -p}dx.
\end{equation}%
Since we can choose, $\frac{\theta _{r+1}}{\theta _{r}}<1$ and $\frac{\rho
_{p+1}}{\rho _{p}}<1,$ then

\begin{equation}
\label{(4.11)}
 \left( \frac{\theta _{r+1}}{\theta_{r}}-1\right) u^{2}v \leq 0 , \left( \frac{\rho _{p+1}}{\rho _{p}}-1\right)
w^{2}z \leq 0,
\end{equation}

\begin{equation*}
\frac{\rho _{p+1}}{\rho _{p}}\frac{\sigma _{q+1}}{\sigma _{q}}\left( \frac{%
\theta _{r+1}}{\theta _{r}}f+g\right) +\frac{\rho _{p+1}}{\rho _{p}}h+k\leq
C_{3}\left( u+v+w+z+1\right) ,
\end{equation*}%
then we have%
\begin{equation}
\label{Je}
J\leq C_{4}\underset{\Omega }{\int }\overset{n-1}{\underset{p=0}{\sum }}%
\overset{p}{\underset{q=0}{\sum }}\overset{q}{\underset{r=0}{\sum }}%
C_{n-1}^{p}C_{p}^{q}C_{q}^{r}u^{r}v^{q-r}w^{p-q}z^{\left( n-1\right)
-p}\left( u+v+w+z+1\right) dx.
\end{equation}

To prove that the functional $L_{n}$ is uniformly bounded on the interval $%
\left[ 0,T_{\max }\right] $, first we write 
\begin{eqnarray*}
&&\overset{n-1}{\underset{p=0}{\sum }}\overset{p}{\underset{q=0}{\sum }}%
\overset{q}{\underset{r=0}{\sum }}%
C_{n-1}^{p}C_{p}^{q}C_{q}^{r}u^{r}v^{q-r}w^{p-q}z^{\left( n-1\right)
-p}\left( u+v+w+z+1\right) \\
&=&R_{n}\left( u,v,w,z\right) +S_{n-1}\left( u,v,w,z\right) ,
\end{eqnarray*}%
\newline
where $R_{n}\left( u,v,w,z\right) $ and $S_{n-1}\left( u,v,w,z\right) $are
two homogeneous polynomials of degrees $n$ and $n-1,$ respectively. 

%

By application of Holder's inequality to the integrals \eqref{Je}, one gets the functional $L_{n}$ satisfies the differential inequality 
\begin{equation*}
L_{n}^{\prime }\left( t\right) \leq C_{5}L\left( t\right) +C_{7}L^{\frac{n-1%
}{n}}\left( t\right) ,
\end{equation*}%
which for $Z=L_{n}^{\frac{1}{n}}$ can be written as 
\begin{equation*}
nZ^{\prime }\leq C_{5}Z+C_{7}.
\end{equation*}%
A simple integration gives the uniform bound of the functional $L_{n}$ on
the interval $\left[ 0,T_{\max }\right] ;$ this ends the proof of the
theorem.\bigskip
\end{proof}


\begin{proof}[Proof of corollary \ref{cor1}]
The proof of this corollary is an immediate consequence of theorem1, the
preliminary observations and the inequality 
\begin{equation*}
\underset{\Omega }{\int }\left( u+v+w+z\right) ^{n}dx\leq C_{9}L_{n}\left(
t\right) \text{ on }\left[ 0,T^{\ast }\right[ ,
\end{equation*}%
for all $n\geq 1,$ where $C_{9}$ is a positive constant$.$
\end{proof}

\begin{proof}[Proof of proposition \ref{prop1}]
Since the degree of the polynomials in the reaction terms are three, then
from corollary 1, there exists a positive constants $C_{10}$ such that 
\begin{equation*}
\underset{\Omega }{\int }\left( u+v+w+z+1\right) ^{3n}dx\leq C_{10}\text{ on 
}\left[ 0,T_{\max }\right[ ,
\end{equation*}%
for all $n\geq 1.$The reaction terms are in $L^{\infty }\left( 0,T^{\ast
};L^{n}\left( \Omega \right) \right) $ for some $n>\dfrac{N}{2}.$Then from
the preliminary observations the solution is global.
\end{proof}

\section{Global existence of weak solutions and construction of the absorbing set }
\label{4}
\subsection{The $(v-z)$ $L^p$ absorbing set}
We first recall the following definition,

\begin{definition}[Bounded absorbing set]
A bounded set $\mathcal{B}$ in a reflexive Banach space $H$ is called a
bounded absorbing set if for each bounded subset $U$ of $H$, there is a time 
$T=T(U)$, such that $S(t)U \subset \mathcal{B}$ for all $t > T$. The number $%
T=T(U)$ is referred to as the \textbf{compactification time} for $S(t)U$.
This is essentially the time after which the semigroup compactifies. 
\end{definition}

Consider, for a positive real $p\geq 2$, the following functional%
\begin{equation}
\label{5.1}
K_{p}(t)=\int_{\Omega }\left( v^{p}+\delta z^{p}\right) dx,  
\end{equation}%
where $p\geq 2$ is a positive real. By differentiating $K_{p}$ with respect
to $t$, we get%

\begin{eqnarray*}
K_{p}^{\prime }(t) &=&-p(p-1)\int_{\Omega }\left( av^{p-2}\left\vert \nabla
v\right\vert ^{2}+c\delta z^{p-2}\left\vert \nabla z\right\vert ^{2}\right)dx \notag \\
&& +p\int_{\Omega }\left[ v^{p-1}\left( \beta u-u^{2}v-D_{2}v\ +D_{2}z\right) 
 +\delta z^{p-1}\left( \beta w-w^{2}z+D_{4}v\ -D_{4}z\right) \right] dx \notag \\
&& = I+J. \notag \\
\end{eqnarray*}%

Using Poincare inequality, we obtain%

\begin{equation*}
I\leq -C_{1}p(p-1)\int_{\Omega }\left( v^{p-2}\left\vert \nabla v\right\vert
^{2}+\delta z^{p-2}\left\vert \nabla z\right\vert ^{2}\right)
dx-2C_{2}\int_{\Omega }\left( v^{p}+\delta z^{p}\right) dx,
\end{equation*}%
where%
\begin{equation*}
C_{1}=\min \left( a,c\right) ,\ \ C_{2}=\tfrac{(p-1)}{2p}\lambda _{1}\min
\left( a,c\right)
\end{equation*}%
\begin{equation*}
J=p\int_{\Omega }\left[ v^{p-2}\left( \beta uv-u^{2}v^{2}\right) +\delta
z^{p-2}\left( \beta wz-w^{2}z^{2}\right) -\left( v-z\right) \left(
D_{2}v^{p-1}-cD_{4}z^{p-1}\right) \right] dx
\end{equation*}%
By choosing $\delta =\frac{D_{2}}{D_{4}}$, we get%
\begin{eqnarray*}
J &=&p\int_{\Omega }\left[ v^{p-2}\left( \beta uv-u^{2}v^{2}\right) +\delta
z^{p-2}\left( \beta wz-w^{2}z^{2}\right) -D_{2}\left( v-z\right) ^{2}\overset%
{p-2}{\underset{q=0}{\sum }}v^{q}z^{p-2-q}\right] dx \\
&\leq &p\tfrac{\beta ^{2}}{4}\int_{\Omega }\left[ v^{p-2}+\delta z^{p-2}%
\right] dx.
\end{eqnarray*}%

Applying Young inequality, we get%

\begin{equation*}
J\leq C_{3}+C_{2}\int_{\Omega }\left[ v^{p}+\delta z^{p}\right] dx.
\end{equation*}%

Finally, we have%

\begin{eqnarray}
\label{5.2}
&&\frac{d}{dt}\int_{\Omega }\left[ v^{p}+\delta z^{p}\right]
dx+C_{1}p(p-1)\int_{\Omega }\left( v^{p-2}\left\vert \nabla v\right\vert
^{2}+\delta z^{p-2}\left\vert \nabla z\right\vert ^{2}\right)
dx \notag \\
&& +C_{2}\int_{\Omega }\left[ v^{p}+\delta z^{p}\right] dx\leq C_{3}, 
\end{eqnarray}%
which gives%

\begin{equation}
\label{5.3}
\int_{\Omega }\left[ v^{p}+\delta z^{p}\right] dx\leq
e^{-C_{2}t}\int_{\Omega }\left[ v_{0}^{p}+\delta z_{0}^{p}\right] dx+\frac{%
C_{3}}{C_{2}}.  
\end{equation}

\begin{remark}
Note that the $(v,z)$ component of the solution to the Brusselator system gets into an $L^p$ ball, for data in $L^2$, due to the almost instantaneous regularisation of the system, we have that the weak solution becomes a strong solution that belongs to $H^1_0$, and hence to $L^p, p \leq 6$. Thus the data without loss of generality can be taken in $H^1_0$.
\end{remark}

\subsection{ The $L^2$ absorbing set}

In this subsection, we will prove the existence of an $L^{2}$ absorbing set
for the semi group of the Brusselator. We use
the functional $L_{n}$ given in section 3, for $n=2$:

\begin{equation}
\label{5.4}
L_{2}(t)=\int_{\Omega }H_{2}\left( u\left( t,x\right) ,v\left( t,x\right)
,w\left( t,x\right) ,z\left( t,x\right) \right) dx,  
\end{equation}%
where $H_{2}$ is is given by \eqref{(3.5)} for $n=2$%
\begin{equation}
\label{5.5}
\left. 
\begin{array}{c}
H_{2}\left( u,,v,w,z\right) =\theta _{0}\sigma _{0}\rho _{0}z^{2}+2\theta
_{0}\sigma _{0}\rho _{1}wz+2\theta _{0}\sigma _{1}\rho _{1}vz+2\theta
_{1}\sigma _{1}\rho _{1}uz+\theta _{0}\sigma _{0}\rho _{2}w^{2} \\ 
+2\theta _{0}\sigma _{1}\rho _{2}vw+2\theta _{1}\sigma _{1}\rho
_{2}uw+\theta _{0}\sigma _{2}\rho _{2}v^{2}+2\theta _{1}\sigma _{2}\rho
_{2}uv+\theta _{2}\sigma _{2}\rho _{2}u^{2}%
\end{array}%
\right.   
\end{equation}%
After differentiating the functional $L_{2}$ with respect to the time $t$
and following the same steps as in section 3 taking into account the
positivity of the matrix $B_{000}$ given by \eqref{mat} and which is in this case
a constant matrix, we get%
\begin{eqnarray}
\label{5.6}
&& L_{2}^{\prime }(t)+2C_{4}\int_{\Omega }\left( \left\vert \nabla u\right\vert
^{2}+\left\vert \nabla v\right\vert ^{2}+\left\vert \nabla w\right\vert
^{2}+\left\vert \nabla z\right\vert ^{2}\right) dx \notag \\
&& \leq \int_{\Omega }R\left(
u\left( t,x\right) ,v\left( t,x\right) ,w\left( t,x\right) ,z\left(
t,x\right) \right) dx,  \notag \\
\end{eqnarray}%

where $C_{4}$ is a positive constant and $R$ is a polynomial of degree two
and which the form will be determined later. Using Poincare inequality, we
obtain%
\begin{equation}
\label{5.7}
\left. 
\begin{array}{l}
L_{2}^{\prime }(t)+C_{5}\int_{\Omega }\left( u^{2}+v^{2}+w^{2}+z^{2}\right)
dx+C_{4}\int_{\Omega }\left( \left\vert \nabla u\right\vert ^{2}+\left\vert
\nabla v\right\vert ^{2}+\left\vert \nabla w\right\vert ^{2}+\left\vert
\nabla z\right\vert ^{2}\right) dx \\ 
\leq \int_{\Omega }R\left( u,v,w,z\right) dx.%
\end{array}%
\right.   
\end{equation}%
From \eqref{J1}-\eqref{Je}, the polynomial $R$ has the following form 
\begin{equation}
\label{5.8}
R=R_{1}\left( u,w\right) +R_{2}(v,z)+R_{3}\left( u,v,w,z\right) ,  
\end{equation}%
where%
\begin{equation}
\label{5.9}
\left\{ 
\begin{array}{l}
R_{1}\left( u,w\right) =\left( \alpha _{1}u^{2}+\alpha _{2}uw+\alpha
_{3}w^{2}\right) , \\ 
R_{2}(v,z)=\left( \beta _{1}v^{2}+\beta _{2}vz+\beta _{3}z^{2}\right) , \\ 
R_{3}\left( u,v,w,z\right) =\left( \gamma _{1}u+\gamma _{2}w+\gamma
_{3}\right) \left( \delta _{1}v+\delta _{2}z+\delta _{3}\right) 
\end{array}%
\right.  
\end{equation}%
and where the coefficients are given in \eqref{(4.11)}. By application of Young
inequality, we can find a positive constants $C_{6}$ and $C_{7}$ such that%
\begin{equation}
\label{5.10}
\int_{\Omega }R_{2}(v,z)dx\leq C_{6}\int_{\Omega }\left[ v^{2}+\delta z^{2}%
\right] dx,  
\end{equation}%
and 
\begin{equation}
\label{5.11}
\int_{\Omega }R_{3}\left( u,,v,w,z\right) dx\leq \frac{C_{5}}{2}\int_{\Omega
}\left( u^{2}+w^{2}\right) dx+C_{7}\int_{\Omega }\left( v^{2}+\delta
z^{2}\right) dx.  
\end{equation}%
If we prove that%
\begin{equation}
\label{5.12}
R_{1}\left( u,w\right) \leq 0,  
\end{equation}%
then \eqref{5.7} will become%

\begin{eqnarray}
\label{5.13}
&& L_{2}^{\prime }(t)+\dfrac{C_{5}}{2}\int_{\Omega }\left(
u^{2}+v^{2}+w^{2}+z^{2}\right) dx \notag \\ 
&& +C_{4}\int_{\Omega }\left( \left\vert
\nabla u\right\vert ^{2}+\left\vert \nabla v\right\vert ^{2}+\left\vert
\nabla w\right\vert ^{2}+\left\vert \nabla z\right\vert ^{2}\right) dx \notag \\
&& \leq \left( C_{6}+C_{7}\right) \int_{\Omega }\left( v^{2}+\delta z^{2}\right) dx,%
\end{eqnarray}%

then by multiplying \eqref{5.2},for $p=2,$ by $2(\frac{C_{6}+C_{7}}{C_{2}})$ and adding to \eqref{5.13}. We shall get after simplifications%

\begin{eqnarray}
\label{5.14}
&& L_{2}^{\prime }(t)+C_{8}K_{2}^{\prime }+C_{9}\int_{\Omega }\left(
u^{2}+v^{2}+w^{2}+z^{2}\right) dx \notag \\
&& +C_{10}\int_{\Omega }\left( \left\vert
\nabla u\right\vert ^{2}+\left\vert \nabla v\right\vert ^{2}+\left\vert
\nabla w\right\vert ^{2}+\left\vert \nabla z\right\vert ^{2}\right) dx \leq
C_{11}, 
\end{eqnarray}%
where the constants are positive and independent on the initial data.

To prove \eqref{5.12}, we have from \eqref{J1} for $n=2$%
\begin{equation}
\label{5.15}
\left\{ 
\begin{array}{l}
-\alpha _{1}=\rho _{1}^{\prime }\left[ \sigma _{1}^{\prime }\left( \theta
_{1}^{\prime }\left( \beta +1\right) -\beta \right) +\sigma _{1}^{\prime
}\theta _{1}^{\prime }D_{1}-D_{3}\right] ,\  \\ 
-\alpha _{2}=\rho _{1}^{\prime }\left[ \sigma _{0}^{\prime }\left( \theta
_{0}^{\prime }\left( \beta +1\right) -\beta \right) +\left( \sigma
_{0}^{\prime }\theta _{0}^{\prime }-\sigma _{1}^{\prime }\theta _{1}^{\prime
}\right) D_{1}+\left( \beta +1\right) \right] -\beta , \\ 
-\alpha _{3}=-\rho _{1}^{\prime }\left( \sigma _{0}^{\prime }\theta
_{0}^{\prime }D_{1}-D_{3}\right) +\rho _{1}^{\prime }\left( \beta +1\right)
-\beta ,%
\end{array}%
\right.  
\end{equation}%
where%
\begin{equation}
\label{5.16}
\frac{\theta _{r+1}}{\theta _{r}}=\theta _{r}^{\prime },\ \frac{\sigma _{q+1}%
}{\sigma _{q}}=\sigma _{q}^{\prime },\ \frac{\rho _{p+1}}{\rho _{p}}=\rho
_{p}^{\prime },\ r=\overline{0,q},\ q=\overline{0,p},\ p=0\text{ and }1. 
\end{equation}%
The coefficient $\alpha _{3}<0$, if%
\begin{equation}
\label{5.17}
\sigma _{0}^{\prime }\theta _{0}^{\prime }<\frac{\beta +1+D_{3}}{D_{1}}\text{
and }1>\rho _{1}^{\prime }>\frac{\beta }{-\sigma _{0}^{\prime }\theta
_{0}^{\prime }D_{1}+\left( \beta +1+D_{3}\right) }  
\end{equation}%
The inequalities \eqref{5.17} are satisfied if we choose $\theta _{0}^{\prime }$
and $\sigma _{0}^{\prime }$ satisfying%
\begin{equation}
\label{5.18}
\theta _{0}^{\prime }\sigma _{0}^{\prime }<\frac{1+D_{3}}{D_{1}}.  
\end{equation}%
Then $\alpha _{1}<0$ under the following conditions%
\begin{equation}
\label{5.19}
\theta _{1}^{\prime }>\frac{\beta }{\beta +1+D_{1}}\text{ and }\sigma
_{1}^{\prime }>\frac{D_{3}}{\theta _{1}^{\prime }\left( \beta
+1+D_{1}\right) -\beta }.  
\end{equation}%
Finally $\alpha _{2}<0$, if we choose%
\begin{equation}
\label{5.20}
\sigma _{0}^{\prime }<\frac{\beta +1}{-\theta _{0}^{\prime }\left[ \beta
+1+\left( 1-\sigma \theta \right) D_{1}\right] +\beta }
\end{equation}
and

\begin{equation}
1>\rho
_{1}^{\prime }>\frac{\beta }{\left[ \sigma _{0}^{\prime }\left\{ \theta
_{0}^{\prime }\left[ \beta +1+\left( 1-\sigma \theta \right) D_{1}\right]
-\beta \right\} +\left( \beta +1\right) \right] }.  
\end{equation}%
Inequalities \eqref{5.20} are satisfied under the following condition%
\begin{equation}
\label{5.21}
\sigma _{0}^{\prime }<\frac{1}{-\theta _{0}^{\prime }\left[ \beta +1+\left(
1-\sigma \theta \right) D_{1}\right] +\beta }.  
\end{equation}%
Since we should have $\theta _{0}^{\prime },\theta _{1}^{\prime }<1$ and $%
\rho _{0}^{\prime },\rho _{1}^{\prime }<1$ to eliminate the terms $u^{2}v$
and $w^{2}z$ and get \eqref{(4.11)}, then \eqref{5.18}, \eqref{5.19} and \eqref{5.20} are satisfied if
we can choose $\sigma _{0}^{\prime }$ sufficiently small and $\sigma
_{1}^{\prime }$ sufficiently large. But there is no conditions on $\sigma
_{0}^{\prime }$ and $\sigma _{1}^{\prime }$ only the report $\frac{\sigma
_{1}^{\prime }}{\sigma _{0}^{\prime }}=\sigma ^{2}$ which should be
sufficiently large to satisfy conditions \eqref{(3.2)} and \eqref{(3.3)}. So we can choose $%
\sigma _{0}^{\prime }$ sufficiently small and $\sigma ^{2}$ sufficiently
large to get $\sigma _{1}^{\prime }$ sufficiently large. Then \eqref{5.12} is
satisfied for appropriate constants.

To get the absorbing set from \eqref{5.14}, we should remark that from the
definition of the functionals $L_{2}$ and $K_{2}$, we can find a positive
constants $C_{12}$ and $C_{13}$ such that%
\begin{eqnarray}
\label{5.22}
&& C_{12}\int_{\Omega }\left( u^{2}+v^{2}+w^{2}+z^{2}\right) dx \nonumber \\
&& \leq L_{2}(t)+C_{8}K_{2}(t) \nonumber \\
&& \leq C_{13}\int_{\Omega }\left(u^{2}+v^{2}+w^{2}+z^{2}\right) dx,\ t>0.  \nonumber \\
\end{eqnarray}%
Then \eqref{5.14} becomes%

\begin{eqnarray}
\label{5.23}
&&\frac{d}{dt}\left[ L_{2}(t)+C_{8}K_{2}(t)\right] +C_{14}\left[
L_{2}(t)+C_{8}K_{2}(t)\right] \notag \\
&& +C_{15}\int_{\Omega }\left( \left\vert \nabla
u\right\vert ^{2}+\left\vert \nabla v\right\vert ^{2}+\left\vert \nabla
w\right\vert ^{2}+\left\vert \nabla z\right\vert ^{2}\right) dx\leq C_{16}, \notag \\
\end{eqnarray}%
where all constants are independent of the initial data.

The inequalities \eqref{5.22} and \eqref{5.23} together give%

\begin{equation}
\label{5.24}
\left\Vert \left( u,,v,w,z\right) \right\Vert _{L^{2}(\Omega )}\leq
e^{-C_{17}t}\left\Vert \left( u_{0},,v_{0},w_{0},z_{0}\right) \right\Vert
_{L^{2}(\Omega )}+C_{18},\ \ t>0,  
\end{equation}%
where the constants $C_{17}$ and $C_{18}$ are strictly positive.

\subsection{Global existence of weak solutions}

In this subsection we shall prove the global existence of the weak solutions
of the Brusselator evolutionary equation \eqref{(1.1)}-\eqref{(1.4)} for spatial dimension $
N=1,\ 2$ and $3$. We have the following standard local existence in time of
weak solutions analogous to that given in section 2 concerning the local
existence in time of strong solutions 

\begin{proposition}
For any given initial data $\left( u_{0},v_{0},w_{0},z_{0}\right) $ in $%
\left[ L^{2}(\Omega )\right] ^{4}$, there exists a unique, local weak
solution $(u,v,w,z)$ on $\ (0,T_{\max }[\times \Omega $ of the Brusselator
evolutionary equations \eqref{(1.1)}-\eqref{(1.4)} with boundary conditions \eqref{(1.5)}, which
becomes a strong solution on $\left( 0,T_{\max }\right) $. If $\ T_{\max
}<\infty $ then 

\begin{equation}
\underset{t\nearrow T_{\max }}{\lim }\left\{ \left\Vert
u\left( t,.\right) \right\Vert _{2}+\left\Vert v\left( t,.\right)
\right\Vert _{2}+\left\Vert w\left( t,.\right) \right\Vert _{2}+\left\Vert
z\left( t,.\right) \right\Vert _{2}\right\} =+\infty 
\end{equation}

where $T_{\max }$ $\left\{ \left\Vert u_{0}\right\Vert _{2},\left\Vert
v_{0}\right\Vert _{2},\left\Vert w_{0}\right\Vert _{2},\left\Vert
z_{0}\right\Vert _{2}\right\} $ denotes the eventual blow-up time.
\end{proposition}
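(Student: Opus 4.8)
The plan is to recast \eqref{(1.1)}--\eqref{(1.5)} as an abstract semilinear evolution equation on the Hilbert space $X = [L^2(\Omega)]^4$ and then invoke the standard local existence theory for analytic semigroups, exactly as in \cite{henry,pierre}. Writing $U=(u,v,w,z)^t$, the system reads $\frac{dU}{dt}+\mathcal{A}U=F(U)$, $U(0)=U_0\in X$, where $\mathcal{A}$ is the diagonal elliptic operator in \eqref{(2.3)} equipped with domain $D(\mathcal{A})=[H^2(\Omega)\cap H^1_0(\Omega)]^4$ (which encodes the Dirichlet condition \eqref{(1.5)}), and $F$ collects all reaction and linear coupling terms. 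Since $a,b,c,d>0$, the operator $\mathcal{A}$ is self-adjoint, positive and sectorial, so $-\mathcal{A}$ generates an analytic semigroup $e^{-t\mathcal{A}}$ and the fractional power spaces $X^{\alpha}=D(\mathcal{A}^{\alpha})$ are well defined. I would fix $\alpha=\tfrac12$, so that $X^{1/2}=[H^1_0(\Omega)]^4$.

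The key analytic step is to verify that $F\colon X^{1/2}\to X$ is Lipschitz on bounded sets. The entries of $F$ are polynomials of degree at most three, and the only terms not already Lipschitz from $X^{1/2}$ into $X$ are the cubic ones $u^2v$ and $w^2z$. For these I would use the identity $u^2v-\tilde u^2\tilde v = u^2(v-\tilde v)+(u+\tilde u)(u-\tilde u)\tilde v$ together with Hölder's inequality in the form $\|u^2v\|_2\le\|u\|_6^2\,\|v\|_6$, and similarly for the difference, which reduces everything to controlling the $L^6$ norms of the components by their $X^{1/2}$ norms. Precisely here the restriction $N\le 3$ enters: the Sobolev embedding $H^1_0(\Omega)\hookrightarrow L^6(\Omega)$ is continuous exactly when $N\le3$, whence $X^{1/2}\hookrightarrow[L^6(\Omega)]^4$ and the local Lipschitz bound follows. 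I expect this estimate, tight at the critical exponent when $N=3$, to be the main obstacle; for $N\ge4$ the argument at this regularity level fails and a different $\alpha$ or phase space would be forced.

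With $\mathcal{A}$ sectorial and $F$ locally Lipschitz from $X^{1/2}$ to $X$, the abstract local existence theorem of \cite{henry} applies and yields, for data in $X^{1/2}$, a unique maximal mild solution given by $U(t)=e^{-t\mathcal{A}}U_0+\int_0^t e^{-(t-s)\mathcal{A}}F(U(s))\,ds$; for data merely in $X=[L^2(\Omega)]^4$ one combines this with the smoothing bound $e^{-t\mathcal{A}}\colon X\to X^{1/2}$, $t>0$, to obtain a unique mild solution $U\in C([0,T_{\max});X)$ that lies in $X^{1/2}$ for every $t>0$. Uniqueness is immediate from the Lipschitz bound and Gronwall's lemma. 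The smoothing estimates $\|\mathcal{A}^{\beta}e^{-t\mathcal{A}}\|\le C\,t^{-\beta}e^{-\delta t}$ then upgrade this mild solution to a strong (indeed classical) solution on $(0,T_{\max})$ via a standard bootstrap showing $U(t)\in D(\mathcal{A}^{\beta})$ for every $\beta<1$ and $t>0$, which also justifies the instantaneous regularisation into $H^1_0$, hence into $L^p$ for $p\le6$, asserted in the Remark following \eqref{5.3}.

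Finally, the abstract theory furnishes the continuation dichotomy: either $T_{\max}=\infty$, or $\limsup_{t\nearrow T_{\max}}\|U(t)\|_{X^{1/2}}=\infty$. To convert this into the stated $L^2$ blow-up alternative I would argue by contradiction: suppose $T_{\max}<\infty$ yet $\|u\|_2+\|v\|_2+\|w\|_2+\|z\|_2$ remains bounded on $[0,T_{\max})$. Using the instantaneous regularisation together with the a priori $L^p$ estimates of Theorem \ref{thm1} and Corollary \ref{cor1} — whose differential inequality $nZ'\le C_5Z+C_7$ propagates a finite $L^n(\Omega)$ bound over any finite interval once the datum at $t=\varepsilon>0$ is smooth — one bounds every $L^n$ norm, hence the $L^\infty$ norm, and thus $\|U(t)\|_{X^{1/2}}$, uniformly on $[\varepsilon,T_{\max})$. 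This contradicts the $X^{1/2}$ blow-up, so if $T_{\max}<\infty$ the $L^2$ norm must itself blow up, which is precisely the claimed alternative.
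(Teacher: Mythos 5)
Your route is genuinely different from the paper's. The paper constructs the local weak solution by Galerkin approximation (citing Lions and Chepyzhov--Vishik), using on the approximants the same energy estimates as in the two preceding subsections --- i.e.\ the functional $L_{2}+C_{8}K_{2}$, whose time derivative is controlled only because of the positivity of the solution and the sign structure of the nonlinearity (the choice $\delta=D_{2}/D_{4}$, the conditions making $R_{1}\leq 0$, etc.). You instead set the problem in Henry's sectorial framework with $X^{1/2}=[H^{1}_{0}(\Omega)]^{4}$, and that part of your argument is sound: $F$ is indeed locally Lipschitz from $X^{1/2}$ to $X$ via $\|u^{2}v\|_{2}\leq\|u\|_{6}^{2}\|v\|_{6}$ and $H^{1}_{0}\hookrightarrow L^{6}$ for $N\leq 3$, so you get local well-posedness and instantaneous regularisation for data in $[H^{1}_{0}(\Omega)]^{4}$.

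The gap is precisely at the step where you pass from $H^{1}_{0}$ data to data merely in $[L^{2}(\Omega)]^{4}$, which is what the proposition asserts. Combining the smoothing bound $\|e^{-t\mathcal{A}}U_{0}\|_{X^{1/2}}\leq Ct^{-1/2}\|U_{0}\|_{X}$ with the cubic growth of $F$ does not close a fixed point: in the natural weighted space $\sup_{0<t<T}t^{1/2}\|U(t)\|_{X^{1/2}}<\infty$ one gets $\|F(U(s))\|_{X}\lesssim s^{-3/2}$, and the Duhamel term requires
\begin{equation*}
\int_{0}^{t}(t-s)^{-1/2}\,s^{-3/2}\,ds ,
\end{equation*}
which diverges. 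This is not a technicality: for a cubic nonlinearity in $N=3$ the scaling-critical Lebesgue space is $L^{3}(\Omega)$, so $L^{2}$ data is supercritical and no choice of $\alpha$ in the fractional-power scale rescues the contraction argument. The paper avoids this precisely by not using a fixed point at the $L^{2}$ level: the Galerkin approximants satisfy the a priori bound \eqref{5.24} uniformly in the approximation index, because the Lyapunov-functional cancellations (and positivity, preserved by the invariant-region argument) control the cubic terms in a way that no pointwise Lipschitz estimate can. Your closing blow-up-dichotomy argument is fine once a local $L^{2}$ theory is in hand, but as written the local theory for rough data is the missing piece, and it is exactly the piece where the structure of the system (rather than abstract semigroup theory) must be used.
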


To prove the Proposition we find estimates, similar to those presented on
the two above subsections, on the Galerkin approximate solutions of the
initial value problem \eqref{(1.1)}-\eqref{(1.6)}, for more details see  \cite{Chepyzhov and
Vishik} and \cite{Lions}. Moreover, estimation \eqref{5.24} shows that the local
solution given by the above proposition will never blow up in $\left[
L^{2}(\Omega )\right] ^{4}$ at any finite time and it exists globally. We
have

\begin{proposition}
\label{propweak}
All solutions of the Brusselator evolutionary equations \eqref{(1.1)}-\eqref{(1.4)} with
boundary conditions \eqref{(1.5)} and initial data in $\left[ L^{2}(\Omega )\right]
^{4}$ are global.
\end{proposition}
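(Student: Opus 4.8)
The plan is to establish Proposition~\ref{propweak} by combining the local existence theory for weak solutions with the \emph{a priori} estimate \eqref{5.24}, which already decouples the blow-up question from the nonlinear structure of the reaction terms. First I would invoke the preceding local existence proposition, which guarantees a unique local weak solution $(u,v,w,z)$ on a maximal interval $(0,T_{\max})$ that immediately regularizes into a strong solution, together with the blow-up alternative: if $T_{\max}<\infty$, then $\|u(t,\cdot)\|_2+\|v(t,\cdot)\|_2+\|w(t,\cdot)\|_2+\|z(t,\cdot)\|_2\to+\infty$ as $t\nearrow T_{\max}$. The entire argument then reduces to contradicting this alternative, so the goal is to show that the $L^2$ norm of the solution cannot diverge in finite time.

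The key step is to read off from \eqref{5.24} the uniform bound
\begin{equation*}
\left\Vert (u,v,w,z)\right\Vert_{L^{2}(\Omega)}\leq e^{-C_{17}t}\left\Vert (u_{0},v_{0},w_{0},z_{0})\right\Vert_{L^{2}(\Omega)}+C_{18},\quad t>0,
\end{equation*}
where $C_{17},C_{18}>0$ are independent of the initial data. For any fixed $t\in(0,T_{\max})$ the right-hand side is bounded above by $\left\Vert (u_{0},v_{0},w_{0},z_{0})\right\Vert_{L^{2}(\Omega)}+C_{18}$, a finite constant depending only on the data. Hence $\|u(t,\cdot)\|_2+\|v(t,\cdot)\|_2+\|w(t,\cdot)\|_2+\|z(t,\cdot)\|_2$ stays finite uniformly on $(0,T_{\max})$, which directly contradicts the blow-up alternative unless $T_{\max}=+\infty$. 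This forces global existence of the weak solution.

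One technical point I would be careful about is that \eqref{5.24} was derived formally via the Lyapunov-type functionals $L_2$ and $K_2$ together with the Poincar\'e and Young inequalities, so the estimate must first be justified rigorously on the Galerkin approximations before passing to the limit. Thus I would carry out the differential inequality \eqref{5.23} at the level of the finite-dimensional Galerkin system, obtain the bound \eqref{5.24} uniformly in the approximation parameter, and then use the standard weak and weak-$*$ compactness arguments (as in \cite{Chepyzhov and Vishik} and \cite{Lions}) to extract a limit that inherits the bound. The main obstacle is precisely this justification: one must ensure that the a priori estimate survives the limiting procedure and that the sign condition $R_1(u,w)\leq 0$ established through \eqref{5.18}--\eqref{5.21} genuinely holds for the Galerkin projections, so that the absorbing-set estimate is not merely formal. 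Once the uniform $L^2$ bound is secured on the approximations, the contradiction with the blow-up alternative is immediate and the global existence of weak solutions follows.
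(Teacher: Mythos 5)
Your proposal is correct and follows essentially the same route as the paper: the authors likewise justify the differential inequality \eqref{5.23} and the resulting bound \eqref{5.24} on the Galerkin approximations (citing \cite{Chepyzhov and Vishik} and \cite{Lions}) and then conclude that the local weak solution cannot blow up in $\left[L^{2}(\Omega)\right]^{4}$ in finite time, hence is global. Your explicit attention to the blow-up alternative and to the survival of the sign condition $R_{1}(u,w)\leq 0$ at the Galerkin level is a slightly more careful write-up of exactly the argument the paper sketches.
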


Also, from the estimate via \eqref{5.24} we see there exists a time

\begin{equation}
t_{1}=\max \left( 0,\frac{ln\left( |u(0)|_{2}^{2}+|v(0)|_{2}^{2}+|w(0)|_{2}^{2}+|z(0)|_{2}^{2}\right) }{C_{17}}\right) ,
\end{equation}
such that for times $t>t_{1}$ the following uniform estimate holds

\begin{equation}
|u|_{2}^{2}+|v|_{2}^{2}+|w|_{2}^{2}+|z|_{2}^{2} \leq \frac{1}{C_{12}}(1+C_{18}) \leq C.
\end{equation}
Here $C$ is independent of time and initial data. 
Thus we deduce the following result  

\begin{lemma}
\label{absl2}
There exits a positive constant $C$, independent of time and initial data, such that the ball%
\begin{equation*}
B_{0}=\left\{ \left( u,,v,w,z\right) \in \left[ L^{2}(\Omega )\right]
^{4}:\left\Vert \left( u,,v,w,z\right) \right\Vert _{L^{2}(\Omega )}\leq
C\right\} ,
\end{equation*}%
is an absorbing set of the semiflow associated to the Brusselator
evolutionary equations.
\end{lemma}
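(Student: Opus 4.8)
The plan is to deduce the lemma directly from the dissipative estimate \eqref{5.24}, which was established in the previous subsection and which already carries the entire analytic weight of the argument. Recall that \eqref{5.24} provides the pointwise-in-time bound
\[
\left\Vert (u,v,w,z)(t)\right\Vert _{L^{2}(\Omega )}\leq e^{-C_{17}t}\left\Vert (u_{0},v_{0},w_{0},z_{0})\right\Vert _{L^{2}(\Omega )}+C_{18},\quad t>0,
\]
with $C_{17},C_{18}>0$ independent of the initial data. Setting $C:=1+C_{18}$ (any constant strictly larger than $C_{18}$ works), I will verify that the ball $B_{0}$ of radius $C$ meets the definition of a bounded absorbing set given above.

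First I would fix an arbitrary bounded subset $U\subset \left[ L^{2}(\Omega )\right] ^{4}$, contained say in the ball of radius $\rho$, so that $\left\Vert (u_{0},v_{0},w_{0},z_{0})\right\Vert _{L^{2}(\Omega )}\leq \rho$ for every datum in $U$. The decaying term $e^{-C_{17}t}\rho$ then drops below $1$ once $t$ exceeds
\[
T(U):=\max\left( 0,\tfrac{1}{C_{17}}\ln \rho \right) .
\]
Hence, for all $t>T(U)$ and all initial data in $U$, estimate \eqref{5.24} yields $\left\Vert S(t)(u_{0},v_{0},w_{0},z_{0})\right\Vert _{L^{2}(\Omega )}\leq 1+C_{18}=C$, that is $S(t)U\subset B_{0}$. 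Since $U$ was an arbitrary bounded set and $C$ depends on neither $t$ nor the initial data, this is precisely the definition of a bounded absorbing set, with compactification time $T(U)$; this also matches the explicit time $t_{1}$ and the uniform bound displayed immediately before the statement.

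The point worth stressing is that the genuine difficulty has already been resolved upstream, in obtaining \eqref{5.24}: the present deduction is purely a matter of unwinding the definition and requires no new estimate. The real obstacle lies in producing a dissipative estimate for a system whose reaction terms $u^{2}v$ and $w^{2}z$ have no definite sign, so that a single energy functional need not decay. That was overcome by forming the combined functional $L_{2}(t)+C_{8}K_{2}(t)$, which is equivalent to the squared $L^{2}$ norm by \eqref{5.22}, and by exploiting the sign condition $R_{1}(u,w)\leq 0$ from \eqref{5.12} together with the Poincar\'e and Young inequalities to absorb the indefinite contributions and produce the uniform-in-data constant $C_{18}$. Given all of this, the lemma follows at once.
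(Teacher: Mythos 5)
Your proof is correct and follows essentially the same route as the paper: both deduce the lemma directly from the dissipative estimate \eqref{5.24} by waiting until the exponentially decaying term $e^{-C_{17}t}\left\Vert (u_{0},v_{0},w_{0},z_{0})\right\Vert _{L^{2}(\Omega )}$ falls below a fixed constant, which yields the absorbing time $t_{1}$ (your $T(U)$) and a radius independent of the data. Your observation that all the analytic work resides in establishing \eqref{5.24} is also consistent with how the paper presents this lemma.
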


\section{Global attractor}
\label{5}
In this section we aim to prove the existence of a global attractor for %
\eqref{(1.1)}-\eqref{(1.5)}. We restrict our selves to spatial dimension $%
N=1,2,3$. We will use the following phase spaces, 
\begin{equation*}
H= L^{2}(\Omega)\times L^{2}(\Omega) \times L^{2}(\Omega) \times
L^{2}(\Omega).
\end{equation*}

\begin{equation*}
Y= H^{1}_{0}(\Omega)\times H^{1}_{0}(\Omega) \times H^{1}_{0}(\Omega) \times
H^{1}_{0}(\Omega),
\end{equation*}
\begin{equation*}
X= H^{2}(\Omega) \cap H^{1}_{0}(\Omega) \times H^{2}(\Omega) \cap H^{1}_{0}(\Omega) \times H^{2}(\Omega) \cap H^{1}_{0}(\Omega) \times H^{2}(\Omega) \cap H^{1}_{0}(\Omega).
\end{equation*}

Recall the following definitions

\begin{definition}
Let $\mathcal{A} \subset H^{2}(\Omega)$. Then $\mathcal{A}$ is said to be a $%
(H,X)$ global attractor if the following conditions are satisfied \newline
i) $\mathcal{A}$ is compact in $X$. \newline
ii) $\mathcal{A}$ is invariant, i.e, $S(t)\mathcal{A}=\mathcal{A}, \ t \geq 0
$\newline
iii) If $B$ is bounded in $H$ then  
\begin{equation*}
dist_{X}(S(t)B,\mathcal{A}) \rightarrow 0, \ t \rightarrow \infty.
\end{equation*}
\end{definition}

In order to prove the existence of a global attractor we are required to
show: \newline
i) There exists a bounded absorbing set in the phase space. \newline
ii) The asymptotic compactness property of the semigroup in question, \cite%
{T97}, \cite{SY02}.  \newline
These are defined next

\begin{definition}[asymptotic compactness]
The semi-group $\left\{S(t)\right\}_{t \geq 0}$ associated with a dynamical
system is said to be asymptotically compact in $H^{2}(\Omega)$ if for any $%
\left\{f_{0,n}\right\}^{\infty}_{n=1}$ bounded in $L^{2}$, and a sequence of
times $\left\{t_{n} \rightarrow \infty\right\}$ , $S(t_{n})f_{0,n}$
possesses a convergent subsequence in $H^{2}(\Omega)$.
\end{definition}

 The existence of a bounded absorbing set in the phase
space $L^2(\Omega)$ has been shown via Lemma \ref{absl2}. 

Now we can integrate \eqref{5.23} in the time interval $[t,t+1]$ to yield,

\begin{eqnarray}
&&\int_{t}^{t+1}\left(|\nabla u|_{2}^{2}+|\nabla v|_{2}^{2}+|\nabla w|_{2}^{2}+|\nabla z|_{2}^{2}\right)ds \nonumber \\
&& \leq | u(t)|_{2}^{2}+| v(t)|_{2}^{2}+| w(t)|_{2}^{2}+| z(t)|_{2}^{2}+\int_{t}^{t+1}C ds. \nonumber \\
\end{eqnarray}

Thus for $t \geq t_1$ we see that

\begin{equation}
\label{eq:nabl2}
\int_{t}^{t+1}|\nabla u|_{2}^{2}ds \leq \int_{t}^{t+1}\left(|\nabla u|_{2}^{2}+|\nabla v|_{2}^{2}+|\nabla w|_{2}^{2}+|\nabla z|_{2}^{2}\right)ds \leq C 
\end{equation}

%
%
We multiply \eqref{(1.1)} by $-\Delta u$ and integrate by parts over $\Omega $ to obtain,
\begin{eqnarray}
&& \frac{1}{2}\frac{d |\nabla u|^2_2}{d t} + a|\Delta u
|^2_2  \notag \\
& = & \left( \int_{\Omega}(\alpha-(b+1)u + u^{2}v+D_1(w-u) ) 
(-\Delta u) d\mathbf{x} \right)  \notag \\
 &\leq& \alpha\int_{\Omega} |\Delta u|d\mathbf{x} - \int_{\Omega}((b+1)|\nabla u|^2|d\mathbf{x}  + \int_{\Omega}|u^2|| v| |\Delta u|d\mathbf{x}  \notag \\
&&  + D_1\int_{\Omega}|w-u||\Delta u|d\mathbf{x}  \notag \\
& \leq& \frac{a}{4}|\Delta u|^2_2 + |\Omega|\frac{\alpha^2}{a} + \frac{a}{4}|\Delta u|^2_2 + \frac{CD_1^{2}}{a}\left(| u|^2_2+| w|^2_2\right) + \int_{\Omega}|u^2|| v| |\Delta u|d\mathbf{x}\notag \\
\end{eqnarray}

We now focus on

\begin{eqnarray}
&& \int_{\Omega}|u^2|| v| |\Delta u|d\mathbf{x} \notag \\
&\leq& \frac{1}{a}\int_{\Omega}|u|^4| v|^{2}d\mathbf{x} + \frac{a}{4}\int_{\Omega}|\Delta u|^2d\mathbf{x} \notag \\
&\leq& \frac{C}{a}\left(\int_{\Omega} (u^4)^{\frac{3}{2}}d\mathbf{x}\right)^{\frac{2}{3}}
\left(\int_{\Omega}\left(| v|^{2}\right)^{3}d\mathbf{x}\right)^{\frac{1}{3}} + \frac{a}{8}\int_{\Omega}|\Delta u|^2d\mathbf{x} \notag \\
&\leq& \frac{C}{a}\left(\int_{\Omega} (u^6)d\mathbf{x}\right)^{\frac{2}{3}}
\left(\int_{\Omega}| v|^{6}d\mathbf{x}\right)^{\frac{1}{3}} 
+ \frac{a}{8}\int_{\Omega}|\Delta u|^2d\mathbf{x} \notag \\
&\leq& \frac{C}{a}\left(|\nabla u|^{2}_{2}\right)^2 + \frac{a}{8}\int_{\Omega}|\Delta u|^2d\mathbf{x}
  \notag \\
\end{eqnarray}

 The above follows via Young's inequality with epsilon, the embedding of $H^1_0 \hookrightarrow L^6$, and the uniform estimates on the $L^6$ norm of the $v$ component via Lemma \ref{lem:Absorbing}, we obtain,

\begin{equation}
\frac{d}{dt}\left(|\nabla u|_{2}^{2}\right) + \frac{a}{2}|\Delta u|_{2}^{2}\leq C\left(|\nabla u|_{2}^{2}\right)^2  + \frac{CD_1^{2}}{a}\left(| u|^2_2+| w|^2_2\right)
\label{eq:fh21}
\end{equation}%

This yields

\begin{equation}
\frac{d}{dt}\left(|\nabla u|_{2}^{2}\right) \leq C\left(|\nabla u|_{2}^{2}\right)^2  + \frac{CD_1^{2}}{a}\left(| u|^2_2+| w|^2_2\right)
\label{eq:fn1g}
\end{equation}%

Now we recall the uniform gronwall lemma

\begin{lemma}[Uniform Gronwall Lemma]
\label{lem:gronwall} Let $\beta, \zeta$ and $h$ be nonnegative functions in $%
L^{1}_{loc}[0,\infty;\mathbb{R})$. Assume that $\beta$ is absolutely
continuous on $(0,\infty)$ and the following differential inequality is
satisfied

\begin{equation}
\frac{d \beta}{dt} \leq \zeta \beta + h, \ \mbox{for} \ t>0.
\end{equation}

If there exists a finite time $T > 0$ and some $r > 0$ such that

\begin{equation}
\int^{T+r}_{T}\zeta(\tau) d\tau \leq A, \ \int^{T+r}_{T}\beta(\tau) d\tau
\leq B, \ \mbox{and} \ \int^{T+r}_{T} h(\tau) d\tau \leq C,
\end{equation}

for any $t > T$, where $A,B$ and $C$ are some positive constants, then

\begin{equation}
\beta(t) \leq \left(\frac{B}{r}+C\right)e^{A}, \  \forall \ t > T+r.
\end{equation}
\end{lemma}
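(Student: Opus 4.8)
The plan is to follow the classical integrating-factor argument for this moving-window (``uniform'') Gronwall estimate, reading the hypotheses in the standard way of \cite{T97}: for every window start $s>T$ one has $\int_s^{s+r}\zeta\le A$, $\int_s^{s+r}\beta\le B$ and $\int_s^{s+r}h\le C$ (the displayed limits $T,T+r$ being understood as the moving window $s,s+r$, and the phrase ``for any $t>T$'' referring to the window start). The reason one cannot simply invoke the ordinary differential Gronwall inequality is that its conclusion involves a \emph{pointwise} initial value $\beta(s)$, whereas the hypotheses only control the \emph{integral} of $\beta$ over a window; reconciling these two is the whole point of the lemma, and the averaging device that accomplishes it is the conceptual heart of the proof.

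First I would fix $t>T+r$ and an arbitrary $s\in[t-r,t]$, noting that $s>T$ because $t-r>T$. Since $\beta$ is absolutely continuous and $\beta'\le\zeta\beta+h$ a.e., the function $\tau\mapsto\beta(\tau)\exp(-\int_s^\tau\zeta)$ is absolutely continuous on $[s,t]$ (product and composition of absolutely continuous functions), and the product rule gives that its derivative is at most $h(\tau)\exp(-\int_s^\tau\zeta)$ a.e. Integrating from $s$ to $t$ and using $\zeta\ge0$, so that each factor $\exp(-\int_s^\tau\zeta)\le1$, yields $\beta(t)\exp(-\int_s^t\zeta)-\beta(s)\le\int_s^t h\le C$, hence
\[
\beta(t)\le\big(\beta(s)+C\big)\exp\Big(\int_s^t\zeta\Big).
\]
Because $[s,t]\subseteq[t-r,t]$ and $\zeta\ge0$, the exponent is bounded by $\int_{t-r}^{t}\zeta\le A$ via the window bound with start $t-r>T$, so $\beta(t)\le(\beta(s)+C)e^{A}$ for \emph{every} $s\in[t-r,t]$.

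The decisive step is then to average this inequality in $s$. Since its left-hand side does not depend on $s$, integrating over $s\in[t-r,t]$ gives
\[
r\,\beta(t)\le e^{A}\Big(\int_{t-r}^{t}\beta(s)\,ds+Cr\Big)\le e^{A}(B+Cr),
\]
where $\int_{t-r}^{t}\beta\le B$ is once more the window bound with start $t-r$. Dividing by $r$ produces $\beta(t)\le(B/r+C)e^{A}$ for all $t>T+r$, which is the claim. The only technical points requiring care are the justification of the integrating-factor computation under mere absolute continuity and the repeated appeals to $\zeta,h\ge0$ to discard the exponential factors and to enlarge the integration interval from $[s,t]$ to $[t-r,t]$; the genuine idea, and what I expect to be the crux, is the averaging in $s$, since it is precisely this that converts the uncontrolled pointwise value $\beta(s)$ into the available integral bound $B$.
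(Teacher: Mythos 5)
Your proof is correct and is exactly the classical argument (integrating factor for $\beta(\tau)\exp(-\int_s^\tau\zeta)$ followed by averaging the resulting pointwise bound over $s\in[t-r,t]$) from Temam \cite{T97}, which is the source the paper cites for this lemma without reproducing a proof. Your reading of the hypotheses as window bounds valid for every start $s>T$ is the right one --- the literal single-window reading would make the conclusion false --- and the averaging step you identify as the crux is indeed the whole point of the lemma.
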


Thus the use of the uniform Gronwall lemma with $T=t_1$, $r=1$ and

\begin{equation}
\beta = | \nabla u|^2_2, \zeta = | \nabla u|^2_2, h = \frac{CD_1^{2}}{a}\left(| u|^2_2+| w|^2_2\right), 
\end{equation}
and the estimates via \eqref{eq:nabl2}, Lemma \ref{absl2} yield the following uniform estimate 

\begin{equation}
|\nabla u|_{2}^{2}\leq (C+\frac{CD_1^{2}}{a})e^C \leq C , \ \forall t \geq t_2 = t_{1} + 1.
\end{equation}%
The estimates for the gradients of the $v,w,z$ components is made similarly. We can thus state the following Lemma

\begin{lemma}
\label{lem:Absorbing} Let $u,v,w,z$ be solutions to \eqref{(1.1)}-%
\eqref{(1.5)} with $(u_{0}, v_{0}, w_{0},z_0) \in L^{2}(\Omega)$. There
exists a time $t_{2}$, and a constant $C$ independent of time and
initial data, and depending only on the parameters in \eqref{(1.1)}-%
\eqref{(1.5)}, such that for any $t > t_{2}$ the following uniform a
priori estimates hold: 

\begin{equation}
\label{h1e}
|\nabla u|^2_2 \le C, |\nabla v|^2_2 \le C, |\nabla w|^2_2 \le C, |\nabla z|^2_2 \le C,
\end{equation}
%
\end{lemma}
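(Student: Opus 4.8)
The plan is to derive, for each of the four unknowns, an $H^1$ energy inequality by testing its equation against the negative Laplacian of that unknown, and then to close the resulting superlinear differential inequality by means of the uniform Gronwall Lemma \ref{lem:gronwall}. Concretely, for the $u$ component I would multiply \eqref{(1.1)} by $-\Delta u$ and integrate by parts over $\Omega$, producing
\begin{equation*}
\frac{1}{2}\frac{d}{dt}|\nabla u|_2^2 + a|\Delta u|_2^2 = \int_\Omega\bigl(\alpha-(\beta+1)u+u^2v+D_1(w-u)\bigr)(-\Delta u)\,dx,
\end{equation*}
with the analogous identities for $v,w,z$ carrying diffusion constants $b,c,d$. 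The objective is to bound every term on the right either by a fraction of the dissipative term $a|\Delta u|_2^2$ or by quantities already controlled uniformly in time.

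For the affine and coupling contributions this is routine: Young's inequality with $\varepsilon$ absorbs $\alpha(-\Delta u)$, the linear term, and $D_1(w-u)(-\Delta u)$ into repeated copies of $\tfrac{a}{4}|\Delta u|_2^2$, leaving a remainder of the form $\tfrac{CD_1^2}{a}(|u|_2^2+|w|_2^2)$, which is uniformly bounded for $t>t_1$ by the $L^2$ absorbing set of Lemma \ref{absl2}. The one genuinely dangerous term is the cubic nonlinearity $\int_\Omega u^2 v(-\Delta u)\,dx$. Here I would apply Young's inequality to split off $\tfrac{a}{4}|\Delta u|_2^2$ and a factor $\tfrac{1}{a}\int_\Omega u^4 v^2\,dx$, then Hölder with exponents $\tfrac{3}{2}$ and $3$, and finally the Sobolev embedding $H^1_0(\Omega)\hookrightarrow L^6(\Omega)$ — valid precisely because we have restricted to $N\le 3$. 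Crucially, the factor $\bigl(\int_\Omega v^6\bigr)^{1/3}$ is uniformly bounded by the $(v,z)$ $L^p$ absorbing set constructed in \eqref{5.3}, while $\bigl(\int_\Omega u^6\bigr)^{2/3}\le C(|\nabla u|_2^2)^2$. This delivers the differential inequality
\begin{equation*}
\frac{d}{dt}|\nabla u|_2^2 \le C\bigl(|\nabla u|_2^2\bigr)^2 + \frac{CD_1^2}{a}\bigl(|u|_2^2+|w|_2^2\bigr).
\end{equation*}

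The main obstacle is that this inequality is \emph{quadratic} in $\beta=|\nabla u|_2^2$, so a classical Gronwall estimate is useless: the bare comparison ODE $\beta'=C\beta^2$ blows up in finite time, and no time-uniform bound follows from pointwise information alone. The resolution is the uniform Gronwall Lemma \ref{lem:gronwall}, applied with $\beta=|\nabla u|_2^2$, $\zeta=C|\nabla u|_2^2$ and $h=\tfrac{CD_1^2}{a}(|u|_2^2+|w|_2^2)$. The decisive input is the time-averaged gradient bound \eqref{eq:nabl2} which, being an estimate on $\int_t^{t+1}|\nabla u|_2^2$, simultaneously controls $\int_t^{t+1}\beta$ and $\int_t^{t+1}\zeta$; together with Lemma \ref{absl2} for $\int_t^{t+1}h$, all three hypotheses hold with $r=1$ and $T=t_1$. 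The lemma then yields $|\nabla u|_2^2\le C$ for all $t>t_2:=t_1+1$, with $C$ independent of time and of the initial data.

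Finally, the $w$ estimate is identical in structure to the $u$ estimate by the symmetry of \eqref{(1.1)}--\eqref{(1.5)} under $(u,v)\leftrightarrow(w,z)$. Once the uniform $H^1$ bounds on $u$ and $w$ are in hand, the $v$ and $z$ estimates follow by the same testing procedure: the cubic terms $\int_\Omega u^2 v(-\Delta v)\,dx$ and $\int_\Omega w^2 z(-\Delta z)\,dx$ are again controlled by Young's inequality, Hölder, and $H^1_0\hookrightarrow L^6$, this time using the uniform $L^6$ control of $u$ and $w$ just obtained rather than that of $v,z$. Collecting the four gradient estimates gives \eqref{h1e} and completes the proof of Lemma \ref{lem:Absorbing}.
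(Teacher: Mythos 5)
Your proposal is correct and follows essentially the same route as the paper: testing each equation with the negative Laplacian of its unknown, controlling the cubic term via Young, H\"older with exponents $\tfrac{3}{2}$ and $3$, the embedding $H^1_0(\Omega)\hookrightarrow L^6(\Omega)$ and the uniform $L^6$ bound on $v$, and then closing the resulting quadratic differential inequality with the uniform Gronwall lemma using the time-averaged gradient bound \eqref{eq:nabl2} and Lemma \ref{absl2}. (Incidentally, you correctly source the $L^6$ control of $v$ from the $(v,z)$ absorbing set \eqref{5.3}, whereas the paper's text at that step circularly cites Lemma \ref{lem:Absorbing} itself.)
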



\subsection{Integral in time a priori $H^{2}$ estimate for u}


Now we can integrate \eqref{eq:fh21} in the time interval $[t,t+1]$,
to obtain

\begin{eqnarray}
\label{eq:inth2}
&& \int_{t}^{t+1}|\Delta u|_{2}^{2}ds  \notag \\
&& \leq |\nabla u(t)|_{2}^{2}+\int_{t}^{t+1}\left(C\left(|\nabla u|_{2}^{2}\right)^2  + \frac{CD_1^{2}}{a}\left(| u|^2_2+| w|^2_2\right)\right)ds \leq C , \ \forall t \geq t_2. \notag \\
\end{eqnarray}

This follows from Lemmas \ref{absl2} and \ref{lem:Absorbing}.

%
The same method is used on the other components $v,w,z$ to yield similar
estimates as above. Here the $t_{2}$ depends only on the $L^2$ norm of the initial data, and C is independent of time and initial data. 

\subsection{Uniform a priori $H^{2}$ estimate for u}

In this section we make higher order estimates for the components of the solutions. In these estimates it is assumed that 
$u$ and $\Delta u$ satisfy the same Dirichlet boundary conditions, which is true of all solution components. This can be shown rigorosly \cite{T97}.
We multiply Equation \eqref{(1.1)} by $\Delta^{2} u$ and integrate by parts
over $\Omega$ to obtain 
\begin{eqnarray}
&& \frac{1}{2}\frac{d |\Delta u|^2_2}{d t} + a|\nabla(\Delta u)|^2_2  \notag \\
& = & \left( \int_{\Omega}-\nabla(\alpha-(b+1)u + u^{2}v+D_1(w-u) ) \cdot
\nabla(\Delta u) d\mathbf{x} \right)  \notag \\
&=& \int_{\Omega}((b+1)\nabla u - u^2\nabla v - 2vu\nabla u - D_1(\nabla
w-\nabla u) )\cdot \nabla(\Delta u) d\mathbf{x}  \notag \\
&\leq& \int_{\Omega}((b+1)|\nabla u||\nabla(\Delta u)|d\mathbf{x}  + \int_{\Omega}|u^2||\nabla v| |\nabla(\Delta u)|d\mathbf{x}  \notag \\
&&  + 2\int_{\Omega}|v||u||\nabla u||\nabla(\Delta u)|d\mathbf{x} 
+ D_1\int_{\Omega}|\nabla w|| \nabla(\Delta u)|d\mathbf{x} +\int_{\Omega}|\nabla u|| \nabla(\Delta u)|d\mathbf{x} \notag \\
\end{eqnarray}

We now focus on

\begin{eqnarray}
&& \int_{\Omega}|u^2||\nabla v| |\nabla(\Delta u)|d\mathbf{x} \notag \\
&\leq& \frac{2}{a}\int_{\Omega}|u^4||\nabla v|^{2}d\mathbf{x} + \frac{a}{8}\int_{\Omega}|\nabla(\Delta u)|^2d\mathbf{x} \notag \\
&\leq& \frac{2}{a}\left(\int_{\Omega} (u^4)^{\frac{3}{2}}d\mathbf{x}\right)^{\frac{2}{3}}
\left(\int_{\Omega}\left(|\nabla v|^{2}\right)^{3}d\mathbf{x}\right)^{\frac{1}{3}} + \frac{a}{8}\int_{\Omega}|\nabla(\Delta u)|^2d\mathbf{x} \notag \\
&\leq& \frac{2}{a}\left(\int_{\Omega} (u^6)d\mathbf{x}\right)^{\frac{2}{3}}
\left(\int_{\Omega}|\nabla v|^{6}d\mathbf{x}\right)^{\frac{1}{3}} 
+ \frac{a}{8}\int_{\Omega}|\nabla(\Delta u)|^2d\mathbf{x} \notag \\
&\leq& C|\Delta v|^{2}_{2} + \frac{a}{8}\int_{\Omega}|\nabla(\Delta u)|^2d\mathbf{x}
  \notag \\
\end{eqnarray}

This follows via the use of Holder, Cauchy-Schwartz and Young's inequality. Along with the Sobolev embedding of $H^2 \hookrightarrow W^{1,6}$, and the uniform $L^6$ estimates via \eqref{5.3}. We next handle

\begin{eqnarray}
&& \int_{\Omega}|u||v||\nabla u| |\nabla(\Delta u)|d\mathbf{x} \notag \\
&\leq& C\int_{\Omega}|u|^2| v|^{2}|\nabla u|^2 d\mathbf{x} + \frac{a}{8}\int_{\Omega}|\nabla(\Delta u)|^2d\mathbf{x} \notag \\
&\leq& C\left(\int_{\Omega} (u^6)d\mathbf{x}\right)^{\frac{1}{3}}
\left(\int_{\Omega}| v|^{6}d\mathbf{x}\right)^{\frac{1}{3}} 
\left(\int_{\Omega}|\nabla u|^{6}d\mathbf{x}\right)^{\frac{1}{3}} 
+ \frac{a}{8}\int_{\Omega}|\nabla(\Delta u)|^2d\mathbf{x} \notag \\
&\leq& C|\Delta u|^{2}_{2} + \frac{a}{8}\int_{\Omega}|\nabla(\Delta u)|^2d\mathbf{x}
  \notag \\
\end{eqnarray}

This also follows via the use of Holder, Cauchy-Schwartz and Young's inequality. Along with the Sobolev embedding of $H^2 \hookrightarrow W^{1,6}$, and the uniform $L^6$ estimates via \eqref{5.3}.
Thus we have,

\begin{eqnarray}
&& \frac{d |\Delta u|^2_2}{d t} + a|\nabla(\Delta u)|^2_2  \notag \\
& \leq & C|\nabla u|^{2}_{2} +  \frac{a}{16}|\nabla(\Delta u)|^2_2 + C_1|\nabla w|^{2}_{2} +  \frac{a}{16}|\nabla(\Delta u)|^2_2 \notag \\
&+& C_2|\Delta v|^{2}_{2} + \frac{a}{8}|\nabla(\Delta u)|^2_2 +  C_3|\Delta u|^{2}_{2} + \frac{a}{8}|\nabla(\Delta u)|^2_2 \notag \\
\end{eqnarray}

This follows via Cauchy-Schwartz inequality and the earlier estimates.
Thus we obtain

\begin{equation}
\label{1es}
 \frac{d |\Delta u|^2_2}{d t} + \frac{3a}{8}|\nabla(\Delta u)|^2_2  \leq C|\nabla u|^{2}_{2} + C_1|\nabla w|^{2}_{2} + C_2|\Delta v|^{2}_{2} + C_3|\Delta u|^{2}_{2}
\end{equation}

Now similarly we can multiply Equation \eqref{(1.2)} by $\Delta^{2} v$ and integrate by parts
over $\Omega$ to obtain 
\begin{eqnarray}
&& \frac{1}{2}\frac{d |\Delta v|^2_2}{d t} + b|\nabla(\Delta v)|^2_2  \notag \\
& = & \left( \int_{\Omega}-\nabla(b u - u^{2}v+D_2(z-v) ) \cdot
\nabla(\Delta v) d\mathbf{x} \right)  \notag \\
&\leq& \int_{\Omega}(b|\nabla u||\nabla(\Delta v)|d\mathbf{x}  + \int_{\Omega}|u^2||\nabla v| |\nabla(\Delta v)|d\mathbf{x}  \notag \\
&&  + 2\int_{\Omega}|v||u||\nabla u||\nabla(\Delta v)|d\mathbf{x} 
+ D_1\int_{\Omega}|\nabla z|| \nabla(\Delta v)|d\mathbf{x} +\int_{\Omega}|\nabla v|| \nabla(\Delta v)|d\mathbf{x} \notag \\
\end{eqnarray}

In much the same manner as earlier we can derive

\begin{equation}
\label{2es}
 \frac{d |\Delta v|^2_2}{d t} + \frac{3b}{8}|\nabla(\Delta v)|^2_2  \leq C_4|\nabla u|^{2}_{2} +
 C_5|\nabla v|^{2}_{2} 
  + C_6|\nabla z|^{2}_{2} + C_7|\Delta v|^{2}_{2} + C_8|\Delta u|^{2}_{2} 
\end{equation}
 Now using the embedding of $H^3 \hookrightarrow H^2$, and adding up \eqref{1es} and \eqref{2es} we obtain, 

\begin{equation}  
\label{eq:1ae}
\frac{d}{dt}\left( |\Delta u|^2_2 + |\Delta v|^2_2 \right) \leq C\left(|\nabla u|^{2}_{2}+|\nabla v|^{2}_{2}+|\nabla w|^{2}_{2}+|\nabla z|^{2}_{2}\right) + C_1 \left(|\Delta u|^{2}_{2} +  |\Delta v|^{2}_{2}\right)
\end{equation}

We know via \eqref{eq:inth2} and lemma \ref{lem:Absorbing} that

\begin{equation}
\int_{t}^{t+1}|\Delta u(s)|^2_2ds < C, \int_{t}^{t+1}|\Delta v(s)|^2_2ds < C, \int_{t}^{t+1}C_1 ds < C_1, \ \forall t \geq t_2,
\end{equation}

and

\begin{equation}
\int_{t}^{t+1}(|\nabla u(s)|^2_2 + |\nabla w(s)|^2_2 + |\nabla v(s)|^2_2+ |\nabla z(s)|^2_2)ds
< C, \ \forall t \geq t_2,
\end{equation}

 Thus application of the uniform Gronwall lemma with

\begin{equation}
\beta = |\Delta u|^2_2 + |\Delta v|^2_2, \zeta = C_3, h = C_2(|\nabla u|^2_2+|\nabla
w|^2_2+|\nabla v|^2_2+|\nabla z|^2_2), r=1, T=t_2,
\end{equation}

and similar estimates for the other components, tells us there is a time $t_3=t_2+1$, such that the following lemma
is true

\begin{lemma}
\label{lemh21}  Consider \eqref{(1.1)}-\eqref{(1.5)}. For any solutions 
$u,v,w,z$ to the system, there exists a constant C independent of time and
initial data, and a time $t_{3}=t_2+1$, such that the following estimates hold
uniformly,  
\begin{align*}
| u(t)|_{H^{2}} & \leq C , \ \forall t \geq t_{3} \\
| v(t)|_{H^{2}} & \leq C , \ \forall t \geq t_{3} \\
| w(t)|_{H^{2}} & \leq C , \ \forall t \geq t_{3} \\
| z(t)|_{H^{2}} & \leq C , \ \forall t \geq t_{3} \\
\end{align*}
\end{lemma}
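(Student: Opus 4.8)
The plan is to convert the integral-in-time $H^2$ control already in hand into a pointwise-in-time uniform bound by feeding the differential inequality \eqref{eq:1ae} into the uniform Gronwall Lemma \ref{lem:gronwall}. The structural work is essentially complete: \eqref{eq:1ae} has exactly the form $\frac{d\beta}{dt} \leq \zeta \beta + h$ required by Lemma \ref{lem:gronwall}, with the choices $\beta = |\Delta u|^2_2 + |\Delta v|^2_2$, $\zeta$ equal to the constant multiplying $|\Delta u|^2_2 + |\Delta v|^2_2$ on the right of \eqref{eq:1ae}, and $h = C(|\nabla u|^2_2 + |\nabla v|^2_2 + |\nabla w|^2_2 + |\nabla z|^2_2)$. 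What remains is only to check the three integral hypotheses of the lemma on the window $[t_2, t_2 + 1]$, i.e.\ with $T = t_2$ and $r = 1$.

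Verifying those hypotheses is where each earlier estimate is consumed. Since $\zeta$ is constant, $\int_{t_2}^{t_2+1}\zeta\, ds$ is trivially finite. The bound $\int_{t_2}^{t_2+1} h\, ds \leq C$ is immediate from the uniform $H^1$ estimates of Lemma \ref{lem:Absorbing}, each gradient term being bounded by a constant on $[t_2,\infty)$ over an interval of unit length. The one substantive ingredient is $\int_{t_2}^{t_2+1}\beta\, ds \leq C$, which is precisely the integral-in-time $H^2$ estimate \eqref{eq:inth2} for $u$ together with its analogue for $v$; this is the only hypothesis that cannot be deduced from $L^2$ or $H^1$ information alone, and it is exactly why the separate integral-in-time bound had to be proved beforehand. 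With all three integral bounds verified, Lemma \ref{lem:gronwall} delivers $|\Delta u(t)|^2_2 + |\Delta v(t)|^2_2 \leq C$ for every $t \geq t_3 := t_2 + 1$, with $C$ independent of the initial data since the feeding bounds are themselves absorbing.

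The identical argument applied to the pair $(w, z)$, using the analogues of \eqref{1es} and \eqref{2es} for $|\Delta w|^2_2$ and $|\Delta z|^2_2$, yields $|\Delta w(t)|^2_2 + |\Delta z(t)|^2_2 \leq C$ for $t \geq t_3$. To finish, I would upgrade these Laplacian bounds to full $H^2$-norm bounds: each component lies in $H^1_0(\Omega)$ with $\Delta u \in L^2(\Omega)$, so elliptic regularity (valid under the boundary regularity already assumed, and using that $u$ and $\Delta u$ share the Dirichlet condition) gives $|u|_{H^2} \leq C(|u|_2 + |\Delta u|_2)$; combining with Lemma \ref{absl2} and the just-established bound on $|\Delta u|_2$ produces $|u(t)|_{H^2} \leq C$ uniformly for $t \geq t_3$, and likewise for $v, w, z$, which is the assertion of the lemma.

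Rather than a genuine obstacle, the only point requiring care is the bookkeeping of absorbing times: the Gronwall step activates only once both the $H^1$ bounds and the integral $H^2$ bound are available, namely for $t \geq t_2$, so the uniform $H^2$ bound is necessarily deferred to $t_3 = t_2 + 1$. Because every constant entering $\beta$ and $h$ is independent of the initial data, this independence is inherited by the final $H^2$ bound, exactly as claimed.
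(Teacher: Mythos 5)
Your proposal is correct and follows essentially the same route as the paper: the uniform Gronwall Lemma applied to \eqref{eq:1ae} with $\beta = |\Delta u|^2_2 + |\Delta v|^2_2$, constant $\zeta$, $h$ built from the gradient norms, $T=t_2$, $r=1$, with the integral hypotheses supplied by \eqref{eq:inth2} and Lemma \ref{lem:Absorbing}, and the analogous argument for the $(w,z)$ pair. Your explicit elliptic-regularity step upgrading the Laplacian bound to the full $H^2$ norm is left implicit in the paper but is a correct and welcome clarification.
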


Thus the existence of a bounded absorbing set in $H^{2}(\Omega)$ has also
been established.


\subsection{Uniform a priori estimates for $\frac{\partial u}{\partial t}$}

From \eqref{(1.1)} via brute force we obtain

\begin{eqnarray}
&&\left| \frac{\partial u}{\partial t} \right|^{2}_{2}  \notag \\
&&= \int_{\Omega}\left( d_1\Delta u - \alpha -\left( \beta +1\right)
u+u^{2}v+D_{1}\left( w-u\right) \right)^{2} d\mathbf{x}  \notag \\
& \leq& C\left(|\Delta u|^{2}_{2} + |u|^{8}_{8} + |v|^{4}_{4} + |u|^{2}_{2}
+ |w|^{2}_{2}\right)  \notag \\
&\leq& C.
\end{eqnarray}
This follows via lemma \ref{lemh21} and the apriori $L^p$ bounds on the
solutions. Note since we have uniform $H^2$ estimates now via lemma \ref{lemh21}, the $L^8$ bound on $u$ follows via the embedding of $H^2 \hookrightarrow L^8$. Similar estimates can be derived for the $v,w,z$ components. We
can now state the following Lemma,

\begin{lemma}
\label{lem:lem l2 t}  Consider \eqref{(1.1)}-\eqref{(1.5)}. For any
solutions $u,v,w,z$ of the system there exists of a constant C independent
of time and initial data, and a time $t_{1}$ such
that the following estimates hold uniformly 
\begin{align*}
& \left| \frac{\partial u}{\partial t} \right|^{2}_{2} \leq C, \ \forall t >
t_{3} \\
& \left| \frac{\partial v}{\partial t} \right|^{2}_{2} \leq C, \ \forall t >
t_{3} \\
& \left| \frac{\partial w}{\partial t} \right|^{2}_{2} \leq C, \ \forall t >
t_{3} \\
& \left| \frac{\partial z}{\partial t} \right|^{2}_{2} \leq C, \ \forall t >
t_{3}
\end{align*}
\end{lemma}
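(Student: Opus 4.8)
The plan is to bound $\left|\frac{\partial u}{\partial t}\right|_2^2$ directly by taking the $L^2$ inner product of equation \eqref{(1.1)} with itself, i.e., substitute $\frac{\partial u}{\partial t} = a\Delta u + \alpha - (\beta+1)u + u^2 v + D_1(w-u)$ and estimate the $L^2$ norm of the right-hand side term by term. The only nonlinear term requiring care is $u^2 v$, so the first step is to show that $\|u^2 v\|_2^2 = \int_\Omega |u|^4 |v|^2\, d\mathbf{x}$ is uniformly bounded; the remaining terms ($\Delta u$, $u$, $w$, and the constant $\alpha$) are immediately controlled by Lemma \ref{lemh21} and Lemma \ref{absl2} together with the measure of $\Omega$.

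For the nonlinear term I would apply H\"older's inequality to split $\int_\Omega |u|^4 |v|^2$ as $\left(\int_\Omega |u|^{8}\right)^{1/2}\left(\int_\Omega |v|^4\right)^{1/2}$, or more directly bound it by $\|u\|_8^4 \|v\|_4^2$. The crucial observation is that Lemma \ref{lemh21} already supplies uniform $H^2(\Omega)$ bounds on all four components for $t \geq t_3$. Since we are in spatial dimension $N \leq 3$, the Sobolev embedding $H^2(\Omega) \hookrightarrow L^\infty(\Omega) \hookrightarrow L^p(\Omega)$ holds for every $p$, and in particular $H^2 \hookrightarrow L^8$ and $H^2 \hookrightarrow L^4$. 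This converts the uniform $H^2$ control into uniform $L^8$ and $L^4$ control, making $\|u\|_8^4\|v\|_4^2 \leq C$ with $C$ independent of time and initial data. This is exactly the step flagged in the excerpt (``the $L^8$ bound on $u$ follows via the embedding of $H^2 \hookrightarrow L^8$'').

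Assembling the pieces, I would write
\begin{equation*}
\left|\frac{\partial u}{\partial t}\right|_2^2 \leq C\left(|\Delta u|_2^2 + \|u\|_8^8 + \|v\|_4^4 + |u|_2^2 + |w|_2^2 + \alpha^2|\Omega|\right) \leq C,
\end{equation*}
where each summand on the right is uniformly bounded for $t > t_3$ by the cited lemmas and embeddings. The $v$, $w$, and $z$ estimates are obtained identically: for $\frac{\partial v}{\partial t}$ and $\frac{\partial z}{\partial t}$ one uses equations \eqref{(1.2)} and \eqref{(1.4)}, where the nonlinear term $u^2 v$ (respectively $w^2 z$) is handled by the same H\"older splitting and the same $H^2 \hookrightarrow L^8, L^4$ embeddings, and for $\frac{\partial w}{\partial t}$ one uses \eqref{(1.3)} with the term $w^2 z$.

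The main obstacle is not any single calculation but ensuring the constant $C$ genuinely inherits independence from both time and initial data. This hinges entirely on the fact that the $H^2$ bounds in Lemma \ref{lemh21} are themselves time-uniform (valid for all $t \geq t_3$ with $C$ independent of the data), which in turn rests on the earlier uniform Gronwall arguments. I would therefore emphasize that the estimate is asserted only for $t > t_3$, inheriting the absorbing-set timescale, and note that the restriction $N \leq 3$ is essential because it is what makes the needed high-exponent Sobolev embeddings available; in higher dimensions the $L^8$ control of $u$ from $H^2$ would fail and the term $u^2 v$ could not be closed this way.
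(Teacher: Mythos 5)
Your proposal is correct and follows essentially the same route as the paper: both substitute the right-hand side of \eqref{(1.1)} for $\frac{\partial u}{\partial t}$, bound $\left|\frac{\partial u}{\partial t}\right|_2^2$ by $C\left(|\Delta u|_2^2 + |u|_8^8 + |v|_4^4 + |u|_2^2 + |w|_2^2\right)$, and close the nonlinear term via the uniform $H^2$ bounds of Lemma \ref{lemh21} together with the embedding $H^2(\Omega)\hookrightarrow L^8(\Omega)$ valid for $N\leq 3$. Your Hölder splitting of $\int_\Omega |u|^4|v|^2$ is just the intermediate step before the Young inequality the paper applies implicitly, so the two arguments coincide.
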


We next make an integral in time estimate on $\frac{\partial \nabla u}{\partial t%
}$ We take the partial derivative w.r.t $t$ of \eqref{(1.1)} and multiply
the resulting equation by $\frac{\partial u}{\partial t}$ and integrate by
parts over $\Omega$ to obtain

\begin{eqnarray*}
&& \frac{1}{2}\frac{d}{d t}\left| \frac{\partial u}{\partial t}%
\right|^{2}_{2}+d_1 \left| \left(\frac{\partial \nabla u}{\partial t}%
\right)\right|^{2}_{2}  \notag \\
&\leq& \int_{\Omega}\left(-(\beta + 1 + D_1)\frac{\partial u}{\partial t}+u^2%
\frac{\partial v}{\partial t} + vu\frac{\partial u}{\partial t} + D_1\frac{%
\partial w}{\partial t} \right) \left(\frac{\partial u}{\partial t}\right)d%
\mathbf{x}  \notag \\
&\leq& C\left(\left|\frac{\partial u}{\partial t}\right|^{2}_{2}+\left|\frac{%
\partial w}{\partial t}\right|^{2}_{2} + \int_{\Omega}u^2\frac{\partial v}{%
\partial t}\frac{\partial u}{\partial t} d\mathbf{x} + \int_{\Omega}uv\left(%
\frac{\partial u}{\partial t}\right)^2 \right)d\mathbf{x}  \notag \\
& \leq & C\left(\left|\frac{\partial u}{\partial t}\right|^{2}_{2}+\left|%
\frac{\partial w}{\partial t}\right|^{2}_{2} \right) + C|u|^2_{\infty}\left|%
\frac{\partial v}{\partial t}\right|_2 \left|\frac{\partial u}{\partial t}%
\right|_2 + C|u|_{\infty}|v|_{\infty}\left|\frac{\partial u}{\partial t}\right|^2_{2}  \notag \\
&\leq& C\left(\left|\frac{\partial u}{\partial t}\right|^{2}_{2}+\left|%
\frac{\partial w}{\partial t}\right|^{2}_{2} \right) + C|\Delta u|^2_{2}\left|%
\frac{\partial v}{\partial t}\right|_2 \left|\frac{\partial u}{\partial t}%
\right|_2 + C|\Delta u|_{2}|\Delta v|_{2}|u|^2_{2}  \notag \\
&\leq& C\left(\left|\frac{\partial u}{\partial t}\right|^{2}_{2}+\left|%
\frac{\partial w}{\partial t}\right|^{2}_{2} \right) + C\left(\left|%
\frac{\partial v}{\partial t}\right|^2_2 + \left|\frac{\partial u}{\partial t}%
\right|^2_2\right) + C\left|\frac{\partial u}{\partial t}\right|^2_{2}  \notag \\
&\leq& C\left(\left|\frac{\partial u}{\partial
t}\right|^{2}_{2}+\left|\frac{\partial w}{\partial t}\right|^{2}_{2} + \left|\frac{%
\partial v}{\partial t}\right|^2_2\right) \notag \\
\end{eqnarray*}

Thus integrating the above in the time interval $[t,t+1]$, for $t \geq t_3$
we obtain

\begin{eqnarray}  
\label{1gh}
&& d_1\int^{t+1}_{t}\left| \frac{\partial \nabla u}{\partial t}%
\right|^{2}_{2}ds  \notag \\
&\leq& \left|\frac{\partial u(t)}{\partial t}\right|^{2}_{2}
 + \int^{t+1}_{t}C\left(\left|\frac{\partial u}{\partial
t}\right|^{2}_{2}+\left|\frac{\partial w}{\partial t}\right|^{2}_{2} + \left|\frac{%
\partial v}{\partial t}\right|^2_2\right)ds \notag \\
&\leq& C \notag \\
\end{eqnarray}

This follows via  lemma \ref{lemh21}, lemma \ref{lem:lem l2 t} and the embedding of $H^2(\Omega)
\hookrightarrow L^{\infty}(\Omega)$.


We will next make a uniform in time estimate for $\left|\frac{\partial
\nabla u}{\partial t}\right|^{2}_{2}$, where the previous estimate will be
used. We take the time derivative of \eqref{(1.1)}, then multiply through by 
$-\Delta \frac{\partial u}{\partial t}$ and integrate by parts over $\Omega$
to obtain

\begin{eqnarray*}
&& \frac{1}{2}\frac{d}{d t}\left| \frac{\partial \nabla u}{\partial t}%
\right|^{2}_{2}+d_1 \left|\Delta \left(\frac{\partial u}{\partial t}%
\right)\right|^{2}_{2}  \notag \\
&\leq& \int_{\Omega}(-(\beta + 1 + D_1)\frac{\partial u}{\partial t}+u^2%
\frac{\partial v}{\partial t} + vu\frac{\partial u}{\partial t} + D_1\frac{%
\partial w}{\partial t} )\left(-\Delta \left(\frac{\partial u}{\partial t}%
\right)\right)d\mathbf{x}  \notag \\
&\leq& C\int_{\Omega}(-(\beta + 1 + D_1)\frac{\partial u}{\partial t}+u^2%
\frac{\partial v}{\partial t} + vu\frac{\partial u}{\partial t} + D_1\frac{%
\partial w}{\partial t} )^2 + \frac{d_1}{2}\left|\Delta \frac{\partial u}{%
\partial t}\right|^2_2  \notag \\
&&\leq C\left(\left|\frac{\partial u}{\partial t}\right|^{2}_{2}+\left|\frac{%
\partial w}{\partial t}\right|^{2}_{2} + |u|^2_8|v|^2_8\left|\frac{\partial u%
}{\partial t}\right|^2_4+|u|^4_8\left|\frac{\partial v}{\partial t}%
\right|^2_4\right)  \notag \\
&& \leq C\left(\left|\frac{\partial u}{\partial t}\right|^{2}_{2}+\left|%
\frac{\partial w}{\partial t}\right|^{2}_{2} + |u|^2_8|v|^2_8\left|\frac{%
\partial \nabla u}{\partial t}\right|^2_2+|u|^4_8\left|\frac{\partial \nabla
v}{\partial t}\right|^2_2\right)
\end{eqnarray*}

This follows from the product rule for differentiation, Cauchy-Schwartz
inequality and the Sobolev embedding of $H^1_0(\Omega) \hookrightarrow
L^4(\Omega)$. Now using Poincaire's inequality, and the estimates via lemma \ref{lemh21}, we obtain 
\begin{eqnarray*}
&&\frac{d}{d t}\left| \frac{\partial \nabla u}{\partial t}\right|^{2}_{2} 
\notag \\
&\leq& C\left|\frac{\partial \nabla u}{\partial t}\right|^2_2
+ C\left(\left|\frac{\partial u}{\partial t}\right|^{2}_{2}+\left|\frac{%
\partial w}{\partial t}\right|^{2}_{2} +C_1\left|\frac{\partial \nabla v%
}{\partial t}\right|^2_2\right)  \notag \\
\end{eqnarray*}

We will now derive a uniform estimate for $\left|\frac{\partial \nabla u}{%
\partial t}\right|^2_2$ via the uniform Gronwall Lemma, \cite{T97}.

Now via the same methods in deriving \eqref{1gh} we can obtain

\begin{equation}
\int_{t}^{t+1}\left| \frac{\partial \nabla v}{\partial t}\right|^{2}_{2}
ds < C, \ \forall  \ t \geq t_3.
\end{equation}

%
Thus via the use of lemma \ref{lem:lem l2 t} and
 application of the uniform Gronwall lemma with

\begin{equation}
\beta = \left| \frac{\partial \nabla u(s)}{\partial t}\right|^{2}_{2}, \
\zeta = C , \ r=1, T=t_3
\end{equation}
and 
\begin{equation}
h =C\left(\left|\frac{\partial u(s)}{\partial t}\right|^{2}_{2}+\left|\frac{%
\partial w(s)}{\partial t}\right|^{2}_{2} +C_1\left|\frac{\partial
\nabla v(s)}{\partial t}\right|^2_2\right)
\end{equation}

we obtain

\begin{lemma}
\label{lem1h212}  Consider \eqref{(1.1)}-\eqref{(1.5)}. For any
solutions $u,v,w,z$ to the system, there exists a constant C independent of
time and initial data, and a time $t_{4}=t_{3}+1$, such that the following
estimates hold uniformly,  
\begin{align*}
\left|\frac{\partial \nabla u(t)}{\partial t}\right|_{L^{2}} & \leq C , \
\forall t \geq t_{4} \\
\left|\frac{\partial \nabla v(t)}{\partial t}\right|_{L^{2}} & \leq C , \
\forall t \geq t_{4} \\
\left|\frac{\partial \nabla w(t)}{\partial t}\right|_{L^{2}} & \leq C , \
\forall t \geq t_{4} \\
\left|\frac{\partial \nabla z(t)}{\partial t}\right|_{L^{2}} & \leq C , \
\forall t \geq t_{4} \\
\end{align*}
\end{lemma}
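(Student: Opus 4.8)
The plan is to derive for $\left|\frac{\partial \nabla u}{\partial t}\right|^2_2$ a differential inequality of the form $\frac{d\beta}{dt}\le \zeta\beta + h$ to which the uniform Gronwall lemma (Lemma \ref{lem:gronwall}) applies, and then to repeat the argument verbatim for the remaining three components. First I would differentiate \eqref{(1.1)} in time, producing an evolution equation for $u_t := \frac{\partial u}{\partial t}$ whose reaction part consists of a linear piece in $u_t$ and $w_t$ together with the couplings $u^2 v_t$ and $u v\, u_t$ arising from differentiating $u^2 v$. I would then multiply this equation by $-\Delta u_t$ and integrate by parts over $\Omega$, so that the diffusion term yields a positive multiple of $\left|\Delta u_t\right|^2_2$ on the left-hand side.

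The core of the estimate is controlling the nonlinear contributions on the right. Using Cauchy--Schwarz together with Young's inequality with $\varepsilon$, I would bound each product by a small multiple of $\left|\Delta u_t\right|^2_2$, to be absorbed on the left, plus lower-order remainders. The cubic couplings $u^2 v_t$ and $u v\, u_t$ demand $L^\infty$ control on $u$ and $v$; this is precisely where the uniform $H^2$ bounds of Lemma \ref{lemh21} enter, through the embedding $H^2(\Omega)\hookrightarrow L^\infty(\Omega)$ valid for $N\le 3$. Combining these with the embedding $H^1_0(\Omega)\hookrightarrow L^4(\Omega)$ and Poincar\'e's inequality reduces the estimate to $\frac{d}{dt}\left|\frac{\partial \nabla u}{\partial t}\right|^2_2 \le C\left|\frac{\partial \nabla u}{\partial t}\right|^2_2 + C\left(\left|\frac{\partial u}{\partial t}\right|^2_2 + \left|\frac{\partial w}{\partial t}\right|^2_2 + \left|\frac{\partial \nabla v}{\partial t}\right|^2_2\right)$, which is exactly the required form with $\zeta = C$ and $h$ the bracketed forcing.

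To invoke the uniform Gronwall lemma on $[t_3,t_3+1]$ I must verify its three integral hypotheses. The bound on $\int_t^{t+1}\zeta\,ds$ is immediate since $\zeta$ is constant. The bound on $\int_t^{t+1}\left|\frac{\partial \nabla u}{\partial t}\right|^2_2\,ds$ is the integral-in-time estimate \eqref{1gh}, which I would establish first by testing the time-differentiated equation against $u_t$ itself rather than against $-\Delta u_t$; and the bound on $\int_t^{t+1}h\,ds$ follows from the uniform $L^2$ bounds on $u_t$ and $w_t$ in Lemma \ref{lem:lem l2 t} together with the companion integral bound $\int_t^{t+1}\left|\frac{\partial \nabla v}{\partial t}\right|^2_2\,ds < C$. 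The uniform Gronwall lemma then delivers $\left|\frac{\partial \nabla u}{\partial t}\right|^2_2 \le C$ for all $t\ge t_4 = t_3+1$, and the identical argument applied to \eqref{(1.2)}--\eqref{(1.4)} produces the bounds for $v$, $w$ and $z$.

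The main obstacle is the coupling: the inequality for $u_t$ does not close by itself, because its forcing $h$ contains the gradient quantity $\left|\frac{\partial \nabla v}{\partial t}\right|^2_2$ of another component. This is circumvented not by a pointwise-in-time bound on that quantity but by a weaker integral-in-time bound, which must be obtained for all four components \emph{before} the uniform estimate is attempted. Hence the delicate point is the correct ordering of the estimates --- first the uniform $L^2$ bounds on the time derivatives $u_t,v_t,w_t,z_t$ (Lemma \ref{lem:lem l2 t}), then their integral-in-time gradient bounds of type \eqref{1gh}, and only then the uniform gradient bounds --- together with checking that the constants appearing in \eqref{1gh} and Lemma \ref{lem:lem l2 t} are genuinely independent of the initial data, which ultimately rests on the preceding uniform $H^2$ absorbing-set estimates.
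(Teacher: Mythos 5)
Your proposal is correct and follows essentially the same route as the paper: time-differentiate \eqref{(1.1)}, test against $-\Delta u_t$, absorb the nonlinear couplings using the uniform $H^2$ bounds of Lemma \ref{lemh21} and the embedding $H^1_0(\Omega)\hookrightarrow L^4(\Omega)$, and close via the uniform Gronwall lemma with the integral-in-time hypothesis supplied by \eqref{1gh} and its analogue for $v$. You also correctly identify the one genuinely delicate point --- that the forcing contains $\left|\frac{\partial\nabla v}{\partial t}\right|^2_2$ and is controlled only in the integral sense, which dictates the ordering of the estimates --- exactly as the paper does.
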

The estimates for the $v,w,z$ components follow similarly.

\section{Existence of global attractor}
\label{6}
In this section we prove the existence of a global attractor for %
\eqref{(1.1)}-\eqref{(1.5)}

\subsection{Preliminaries}

Recall the phase space $H$ introduced earlier 
\begin{equation*}
H= L^{2}(\Omega)\times L^{2}(\Omega) \times L^{2}(\Omega) \times
L^{2}(\Omega).
\end{equation*}
Also recall 
\begin{equation*}
Y= H^{1}_{0}(\Omega)\times H^{1}_{0}(\Omega) \times H^{1}_{0}(\Omega) \times
H^{1}_{0}(\Omega),
\end{equation*}
\begin{equation*}
X= H^{2}(\Omega) \cap H^{1}_{0}(\Omega) \times H^{2}(\Omega) \cap H^{1}_{0}(\Omega) \times H^{2}(\Omega) \cap H^{1}_{0}(\Omega) \times H^{2}(\Omega) \cap H^{1}_{0}(\Omega).
\end{equation*}

Also recall that if $\mathcal{A}$ is an $(H,H)$ attractor, then in order to
prove that it is a an $(H,X)$ attractor it suffices to show the existence of
a bounded absorbing set in $X$ as well as demonstrate the asymptotic
compactness of the semi-group in $X$, see \cite{T97}. We first state the
following Lemma.

\begin{lemma}
\label{lem:l2 attr} Consider the reaction diffusion system described via, %
\eqref{(1.1)}- \eqref{(1.5)}. There exists a $(H,H)$ global attractor $%
\mathcal{A}$ for this system, in space dimension $N \leq 3$, which is compact and invariant in $H$ and
attracts all bounded subsets of $H$ in the $H$ metric.
\end{lemma}

\begin{proof}
The system is well posed via proposition \ref{global}, hence there exists a well defined semi-group $\left\{S(t)\right\}_{t \geq 0}$ for initial data in $L^{2}(\Omega)$.
The existence of bounded absorbing sets in $H$ and $Y$ follow via the estimates derived in Lemma \ref{lem:Absorbing}. Furthermore  the compact Sobolev embedding of
\begin{equation*}
Y \hookrightarrow H
\end{equation*}
 yields the asymptotic compactness of the semi-group $\left\{S(t)\right\}_{t \geq 0}$ in $H$. The existence of an $(H,H)$ global attractor for the model now follows.
\end{proof}


\subsection{Asymptotic compactness of the semi-group in $X$}

We next demonstarte the asymptotic compactness of the semigroup in $X$.
Attempting this directly is quite cumbersome, as it will involve essentially
trying to make apriori $H^3(\Omega)$ estimates, and then use the compactness
of $H^3(\Omega) \hookrightarrow H^2(\Omega)$. We use a more elegant method,
where we exploit the form of the equation. We show the analysis for $u$, the
other variables follow similarly. Our strategy is to rewrite \eqref{(1.1)}
as 
\begin{equation}  \label{eq: 1ac}
a\Delta u = \frac{\partial u}{\partial t} - (\alpha -\left( \beta +1\right)
u+u^{2}v+D_{1}\left( w-u\right))
\end{equation}

We will demonstrate that every term on the right hand side of \eqref{eq: 1ac}
converges strongly in $L^{2}(\Omega)$. Thus we obtain that $\Delta u$ converges strongly in $L^{2}(\Omega)$, which will imply via elliptic
regularity the strong convergence of u in $H^{2}(\Omega)$. Since the same method yields the strong convergence in $H^{2}(\Omega)$, for the other components $v,w,z$, the asymptotic compactness in $X$
follows. We state the following Lemma 

\begin{lemma}
\label{lem:asy com}  The semi-group $\left\{S(t)\right\}_{t \geq 0}$
associated with the dynamical system \eqref{(1.1)}-\eqref{(1.5)} is
asymptotically compact in $X$.
\end{lemma}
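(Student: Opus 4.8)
The plan is to follow exactly the rewriting \eqref{eq: 1ac} and to show that along suitable subsequences every term on its right-hand side converges strongly in $L^{2}(\Omega)$; elliptic regularity then upgrades this to strong convergence of $u$ in $H^{2}(\Omega)$. To verify asymptotic compactness in the sense of the definition, I would fix a sequence $\{f_{0,n}\}$ bounded in $H$ and a sequence of times $t_n \to \infty$, and write $(u_n,v_n,w_n,z_n) = S(t_n)f_{0,n}$. Since $t_n \to \infty$, for all large $n$ we have $t_n \geq t_4$, so the uniform estimates of Lemmas \ref{lemh21}, \ref{lem:lem l2 t} and \ref{lem1h212} apply simultaneously: the four components are bounded in $H^{2}(\Omega)\cap H^{1}_{0}(\Omega)$, and the time derivatives $\partial_t u_n,\partial_t v_n,\partial_t w_n,\partial_t z_n$ are bounded in $H^{1}_{0}(\Omega)$.

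First I would extract a subsequence (not relabelled) using two compact embeddings valid for $N \leq 3$. The compactness of $H^{2}(\Omega)\cap H^{1}_{0}(\Omega) \hookrightarrow H^{1}_{0}(\Omega)$ yields $u_n \to u^{*}$ strongly in $H^{1}_{0}$ and weakly in $H^{2}$, and likewise for $v_n,w_n,z_n$. The compactness of $H^{1}_{0}(\Omega)\hookrightarrow L^{2}(\Omega)$, applied to the time derivatives bounded in $H^{1}_{0}$, yields $\partial_t u_n \to \chi$ strongly in $L^{2}(\Omega)$, and similarly for the other components. This strong $L^{2}$ convergence of the time derivative is precisely the ingredient the rewriting needs, and is what lets us avoid any $H^{3}$ estimate.

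Next I would check term by term that the right-hand side of \eqref{eq: 1ac} converges strongly in $L^{2}(\Omega)$. The constant $\alpha$ is trivial; the linear terms $(\beta+1)u_n$ and $D_{1}(w_n-u_n)$ converge strongly in $L^{2}$ because $u_n,w_n$ already converge strongly in $H^{1}_{0}$. The only genuine work is the cubic term $u_n^{2} v_n$, which I would handle by the splitting
\[
u_n^{2} v_n - (u^{*})^{2} v^{*} = (u_n-u^{*})(u_n+u^{*})\,v_n + (u^{*})^{2} (v_n-v^{*}),
\]
estimating each piece in $L^{2}$ by H\"older's inequality with three factors in $L^{6}$, using $1/6+1/6+1/6 = 1/2$ together with the embedding $H^{1}_{0}(\Omega)\hookrightarrow L^{6}(\Omega)$ (valid for $N\leq 3$) and the uniform $L^{6}$ bounds. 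Since $u_n \to u^{*}$ and $v_n \to v^{*}$ strongly in $L^{6}$, both pieces tend to $0$, so $u_n^{2} v_n \to (u^{*})^{2} v^{*}$ strongly in $L^{2}(\Omega)$. Combining all terms, $a\Delta u_n$ converges strongly in $L^{2}(\Omega)$.

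Finally, since each $u_n \in H^{2}(\Omega)\cap H^{1}_{0}(\Omega)$ and $\Delta u_n$ is Cauchy in $L^{2}(\Omega)$, the elliptic estimate $\|u\|_{H^{2}} \leq C\|\Delta u\|_{L^{2}}$ for the Dirichlet Laplacian shows $\{u_n\}$ is Cauchy, hence strongly convergent, in $H^{2}(\Omega)$. The same argument applied to the $v,w,z$ equations (whose nonlinearities $w^{2}z$ are of identical type) gives strong $H^{2}$ convergence of the remaining components, so $S(t_n)f_{0,n}$ has a subsequence converging in $X$, which is the desired conclusion. The main obstacle is securing \emph{strong} (not merely weak) $L^{2}$ convergence of $\partial_t u_n$ and of the cubic term: the former rests entirely on the uniform $H^{1}$ bound for the time derivative from Lemma \ref{lem1h212}, while the latter is exactly where the dimension restriction $N\leq 3$ enters through $H^{1}_{0}\hookrightarrow L^{6}$.
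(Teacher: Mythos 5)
Your proposal is correct and follows essentially the same route as the paper: the same rewriting \eqref{eq: 1ac}, strong $L^{2}$ convergence of $\partial_t u_n$ obtained from the uniform $H^{1}_{0}$ bound of Lemma \ref{lem1h212} plus the compact embedding $H^{1}_{0}(\Omega)\hookrightarrow L^{2}(\Omega)$, the same add-and-subtract splitting of the cubic term controlled by Sobolev embeddings and the strong $H^{1}_{0}$ convergence coming from Lemma \ref{lemh21}, and elliptic regularity to conclude. The only cosmetic difference is that you run the H\"older step with three $L^{6}$ factors rather than the paper's $L^{4}$-based bound, which is if anything the cleaner choice of exponents.
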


\begin{proof}
    Let us denote $u_{n}(t)=S(t)u_{0,n}$ and $U(t_{n})=\frac{\partial u_{n}}{\partial t}|_{t=t_{n}}$.  We have that
    \begin{equation*}
       a\Delta u_{n}(t_{n}) = \left(U(t_{n}\right) - \alpha -\left( \beta +1\right)u_{n}(t_{n})+u_{n}^{2}(t_{n})v_{n}(t_{n})+D_{1}\left( w_{n}(t_{n})-u_{n}(t_{n})\right).
    \end{equation*}
    Via  lemma \ref{lem1h212} we have for $t \geq t_{4}$
    \begin{equation*}
        \left| \frac{\partial \nabla u}{\partial t}\right|_{2} \leq C.
    \end{equation*}
    so for n large enough $t_{n} \geq t_{4}$ and we obtain
    \begin{equation*}
        \left| \frac{\partial \nabla u_{n}}{\partial t} \right|_{2}\biggl|_{t=t_{n}} \leq C.
    \end{equation*}
    
    Thus for n large enough $t_{n} \geq t_{4}$ and we obtain
    \begin{equation*}
        |\nabla U_{n}|_{2} \leq C.
    \end{equation*}
    
    Also via Lemma \ref{lem:Absorbing} we have for $t \geq t_{4}$, the estimate
    \begin{equation*}
        |\nabla u|_{2} \leq C.
    \end{equation*}

    These uniform bounds allow us to extract weakly convergent subsequences,
    \begin{equation*}
        U_{n}(t_{n}) \rightarrow U \ \mbox{weakly in} \ H^{1}_{0}(\Omega).
    \end{equation*}
    \begin{equation*}
        u_{n}(t_{n}) \rightarrow u \ \mbox{weakly in} \ H^{1}_{0}(\Omega).
    \end{equation*}

    Thus from classical functional analysis theory, see \cite{T97}, and the compact embedding of $ H^{1}_{0}(\Omega) \hookrightarrow L^{2}(\Omega)$,
    
 we obtain
    \begin{equation}
     \label{eq:1fq1}
        U_{n}(t_{n}) \rightarrow U \ \mbox{strongly in} \ L^{2}(\Omega),
    \end{equation}
    \begin{equation}
     \label{eq:1fq2}
        u_{n}(t_{n}) \rightarrow u \ \mbox{strongly in} \ L^{2}(\Omega),
    \end{equation}

    Now recall the form of the truncated reaction term, 
   
   \begin{equation*}
   f_{n}(u_{n},v_n,w_n,z_n)=\alpha -\left( \beta +1\right)u_{n}(t_{n})+u_{n}^{2}(t_{n})v_{n}(t_{n})+D_{1}\left( w_{n}(t_{n})-u_{n}(t_{n})\right)
   \end{equation*}
    The convergence of the linear terms in $L^2(\Omega)$ is standard, as we have uniform $H^1_0(\Omega)$ estimates via lemma \ref{lem:Absorbing}. In order to show
    convergence of the nonlinear component of $f_n$ note
    
    \begin{eqnarray}
    && \mathop{\lim}_{n \rightarrow \infty}||u_{n}^{2}v_{n}-u^2v||_2 = \mathop{\lim}_{n \rightarrow \infty}||u_{n}^{2}v_{n}-u^{2}v_{n} + u^{2}v_{n} -u^2v||_2 \nonumber \\
    && \leq \mathop{\lim}_{n \rightarrow \infty}||u_{n}^{2}v_{n}-u^{2}v_{n}||_2 + \mathop{\lim}_{n \rightarrow \infty}||u^{2}v_{n} -u^2v||_2 \nonumber \\
    && \leq \mathop{\lim}_{n \rightarrow \infty}||v_n||_4^2||u_n+u||_4^2||u_n-u||_4^2 + \mathop{\lim}_{n \rightarrow \infty}||u||_4^2||v_n-v||_4^2 \nonumber \\
    && \leq \mathop{\lim}_{n \rightarrow \infty}C||v_n||_4^2||u_n+u||_4^2||\nabla u_n-\nabla u||^2_2 + \mathop{\lim}_{n \rightarrow \infty}C||u||_4^2||\nabla v_n-\nabla v||^2_2 \nonumber \\
    && \leq \mathop{\lim}_{n \rightarrow \infty}C||\nabla u_n-\nabla u||^2_2 + C||\nabla v_n-\nabla v||^2_2 \nonumber \\
    && \rightarrow 0 \nonumber
    \end{eqnarray}
    
    The convergence follows as we have uniform $H^2(\Omega)$ estimates via lemma \ref{lemh21}, hence strong convergence of the components in $H^1_0(\Omega)$. So we obtain
    \begin{equation}
    \label{eq:1fq}
        f_{n}(u_{n}) \rightarrow f(u) \ \mbox{strongly in} \ L^{2}(\Omega).
    \end{equation}
    
    Using the convergences via \eqref{eq:1fq1}, \eqref{eq:1fq2} and \eqref{eq:1fq} we obtain
    \begin{equation*}
        \Delta u_{n} \rightarrow \Delta u \ \mbox{strongly in} \ L^{2}(\Omega).
    \end{equation*}
    However this implies via elliptic regularity that
    \begin{equation*}
     u_{n} \rightarrow  u \ \mbox{strongly in} \ H^{2}(\Omega).
    \end{equation*}
    This proves the Lemma. We can now state the following result
\end{proof}

\begin{theorem}
\label{t 1}  Consider the coupled Brusselator system described via %
\eqref{(1.1)}-\eqref{(1.5)}.  There exists a $(H,X)$ global attractor $%
\mathbb{A}$ for this system, in space dimension $N \leq 3$, which is compact and invariant in $X$ and
attracts all bounded subsets of $H$ in the $X$ metric.
\end{theorem}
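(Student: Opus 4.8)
The plan is to assemble the abstract attractor machinery together with the a priori estimates established in the preceding sections; essentially no new analysis is required beyond invoking the correct lemmas. First I would record that the system generates a well-defined, continuous semigroup $\{S(t)\}_{t \geq 0}$ on the phase space $H$: global existence of weak solutions (Proposition \ref{propweak}), together with uniqueness and the instantaneous smoothing into strong solutions, guarantees that $S(t)$ is well defined on $H$ for each $t \geq 0$.

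Next I would recall the standard criterion for upgrading the topology of an attractor (see \cite{T97}, \cite{SY02}): if $\mathcal{A}$ is already known to be an $(H,H)$ global attractor, then to conclude that $\mathcal{A}$ is in fact an $(H,X)$ attractor it suffices to verify (i) the existence of a bounded absorbing set in the stronger space $X$, and (ii) the asymptotic compactness of the semigroup in $X$. The point of this criterion is that these two conditions allow one to strengthen the topology in which attraction and compactness hold, without enlarging the attractor itself.

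Then I would simply verify the two hypotheses from the lemmas already proved. The $(H,H)$ global attractor $\mathcal{A}$ exists by Lemma \ref{lem:l2 attr}. The bounded absorbing set in $X$ is furnished by the uniform $H^2$ estimates of Lemma \ref{lemh21}, valid for $t \geq t_3$ and independent of the initial data. The asymptotic compactness in $X$ is exactly the content of Lemma \ref{lem:asy com}. Invoking the abstract criterion, $\mathcal{A}$ is an $(H,X)$ attractor: it is compact and invariant in $X$ and attracts all bounded subsets of $H$ in the $X$ metric. Setting $\mathbb{A} = \mathcal{A}$ completes the proof.

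The main obstacle has in fact already been surmounted in Lemma \ref{lem:asy com}, and conceptually that is where the real difficulty lies: because the reaction nonlinearities $u^2 v$ and $w^2 z$ carry no fixed sign, the usual energy route to $H^3$ bounds (followed by the compact embedding $H^3 \hookrightarrow H^2$) is awkward to push through. The elegant workaround is to rewrite $a\Delta u = \frac{\partial u}{\partial t} - (\alpha-(\beta+1)u+u^2 v + D_1(w-u))$ and show that each term on the right converges strongly in $L^2(\Omega)$ along the sequence $S(t_n)f_{0,n}$ --- the time-derivative term via the gradient-in-time bounds of Lemmas \ref{lem:lem l2 t} and \ref{lem1h212}, and the nonlinear term via the uniform $H^2$ bounds and the embedding $H^1_0 \hookrightarrow L^4$ in dimension $N \leq 3$. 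Elliptic regularity then promotes strong $L^2$ convergence of $\Delta u_n$ into strong $H^2$ convergence of $u_n$. Consequently the residual work in the present theorem is purely the bookkeeping of citing the correct lemmas and the abstract upgrade theorem, and I would keep that step brief.
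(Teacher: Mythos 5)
Your proposal is correct and follows essentially the same route as the paper: cite well-posedness for the semigroup, invoke the $(H,H)$ attractor from Lemma \ref{lem:l2 attr}, supply the bounded absorbing set in $X$ from the uniform $H^2$ estimates, and use Lemma \ref{lem:asy com} for asymptotic compactness in $X$. (If anything, your citation of Lemma \ref{lemh21} for the $X$-absorbing set is the more natural choice; the paper points to Lemma \ref{lem1h212} there.)
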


\begin{proof}
The system is well posed via proposition \ref{global}, hence there exists a well defined semi-group $\left\{S(t)\right\}_{t \geq 0}$ for initial data in $L^{2}(\Omega)$. We already have the existence of an $(H,H)$ global attractor via lemma \ref{lem:l2 attr}. The estimates derived via Lemma \ref{lem1h212} give us the existence of bounded absorbing sets in $X$ . Lemma \ref{lem:asy com} proves the asymptotic compactness of the semi-group $\left\{S(t)\right\}_{t \geq 0}$ for the dynamical system associated with \eqref{(1.1)}-\eqref{(1.5)}, in $X$. Thus the theorem is proved.
\end{proof}

\section{Finite Dimensionality of the Global Attractor}
\label{7}
In this section we show that the Hausdorff and fractal dimensions of the global attractor for the reaction diffusion system \eqref{(1.1)}-\eqref{(1.5)}, is finite. Recall

\subsection{Upper bound on the Haursdorff and fractal dimension of
the global attractor}

\begin{definition}[Fractal dimension]
Consider a subset $X$ of a Banach space $H$. If $\bar{X}$ is compact, the fractal dimension of $X$, denoted $d_{f}(X)$, is given by

\begin{equation}
\label{fd1}
d_{f}(X)= \limsup_{\epsilon \rightarrow 0}\frac{log N(X,\epsilon)}{log(\frac{1}{\epsilon})} .
\end{equation}
Here $N(X,\epsilon)$ denotes the minimum number of closed balls of radius $\epsilon$, required to cover $X$. Note $d_{f}(X)$ 
can take the value $+ \infty$.
\end{definition}

\begin{definition}[Hausdorff dimension]
Consider a subset $X$ of a Banach space $H$. If $\bar{X}$ is compact, the Hausdorff dimension of $X$, denoted $d_{H}(X)$, is given by

\begin{equation}
\label{hd1}
d_{H}(X)= \inf_{d > 0}\left\{d: \mathcal{H}^{d}(X)=0\right\} .
\end{equation}
Here

\begin{equation}
\label{md1}
\mathcal{H}^{d}(X)= \lim_{\epsilon \rightarrow 0} \mu(X,d,\epsilon).
\end{equation}

Where

\begin{equation}
\label{bd1}
 \mu(X,d,\epsilon)=\inf \left\{\sum_{i} r_{i}^{d}: r_{i} \leq \epsilon \ \mbox{and} \ X \subseteq \cup_{i}B(x_{i},r_{i})  \right\}.
\end{equation}
and $B(x_{i},r_{i})$ are balls with radius $r_{i}$.

\end{definition}

 We recall the following Lemma from \cite{T97}, which will  be useful to derive the requisite estimates.
\begin{lemma}
\label{t1a}
If there is an integer n such that $q_{n} < 0$ then the Hausdorff  and fractal dimensions  of $\mathcal{A}$, denoted $d_{H}(\mathcal{A})$ and $d_{F}(\mathcal{A})$, satisfy the following estimates
\begin{equation*}
d_{H}(\mathcal{A}) \leq n  
\end{equation*}

\begin{equation*}
\ d_{F}(\mathcal{A}) \leq 2n
\end{equation*}

\end{lemma}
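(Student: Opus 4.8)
Since this statement is recorded as a known result quoted from \cite{T97}, the plan is to recall the skeleton of the Constantin--Foias--Temam volume-contraction argument and to indicate which of its hypotheses our system already supplies, rather than to reprove the abstract covering estimates from scratch. Throughout, $\mathcal{A}$ is the $(H,X)$ global attractor produced in Theorem \ref{t 1}, and $F$ denotes the full right-hand side operator of \eqref{(1.1)}--\eqref{(1.4)}.

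The first step is to realize $q_{n}$ as an exponential rate of volume decay. For $u_{0}\in\mathcal{A}$ the semigroup is uniformly differentiable, with derivative $L(t;u_{0})=S'(t;u_{0})$ solving the first variation equation $\partial_{t}U = F'(S(t)u_{0})U$; the uniform $H^{2}$ bounds of Lemma \ref{lemh21} make the Nemytskii derivatives of the cubic terms bounded on $\mathcal{A}$, so $L(t;u_{0})$ is well defined and bounded. For an orthonormal family $\xi_{1},\dots,\xi_{n}$, the Liouville trace formula gives
\begin{equation*}
\left\| L(t;u_{0})\xi_{1} \wedge \cdots \wedge L(t;u_{0})\xi_{n} \right\| = \exp\!\left( \int_{0}^{t} \mathrm{Tr}\big(F'(S(\tau)u_{0})\circ Q_{n}(\tau)\big)\,d\tau \right),
\end{equation*}
where $Q_{n}(\tau)$ is the orthogonal projection onto $\mathrm{span}\{U_{1}(\tau),\dots,U_{n}(\tau)\}$. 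Passing to $\limsup_{t\to\infty}\frac{1}{t}\log$ of the supremum over $u_{0}\in\mathcal{A}$ and over orthonormal families of the left-hand side yields precisely $q_{n}$, so the hypothesis $q_{n}<0$ states that every $n$-dimensional volume element carried by the linearized flow is uniformly, exponentially contracted.

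The second step is the covering argument. Uniform contraction of $n$-dimensional volumes implies that the invariant set $S(t)\mathcal{A}=\mathcal{A}$ can be covered, at each small scale, by a controlled number of balls: the $n$-dimensional Hausdorff measure $\mathcal{H}^{n}(\mathcal{A})$ is forced to vanish, giving $d_{H}(\mathcal{A})\le n$, while a sharper count of the balls of radius $\varepsilon$ needed to cover the deformed ellipsoids $L(t;u_{0})B$ bounds $N(\mathcal{A},\varepsilon)$ and hence $d_{F}(\mathcal{A})\le 2n$ through \eqref{fd1}. I expect the fractal estimate to be the delicate part: whereas the Hausdorff bound only needs $\mathcal{H}^{n}(\mathcal{A})=0$, the fractal bound requires quantitative control of how the semiaxes of $L(t;u_{0})B$ distribute, so that the cover counting produces the factor $2n$ rather than an uncontrolled multiple of $n$; this is exactly where the uniform-in-$u_{0}$ boundedness of the linearized operator on $\mathcal{A}$, furnished by Lemmas \ref{lemh21} and \ref{lem1h212}, is needed. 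The remaining verifications---that $S(t)$ is uniformly differentiable on $\mathcal{A}$ and that the trace integrand is integrable---follow from these same a priori bounds, completing the reduction to the abstract theorem of \cite{T97}.
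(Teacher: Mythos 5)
The paper offers no proof of this lemma at all---it is quoted verbatim as a known result from \cite{T97}---and your proposal correctly reconstructs the standard Constantin--Foias--Temam volume-contraction and covering argument that underlies that citation, so you are taking essentially the same route as the paper (namely, deferring to the abstract theorem while supplying its skeleton). Your remark that the factor $2n$ in the fractal bound is the delicate point is apt, since $q_n<0$ alone yields $d_F(\mathcal{A})\le n\bigl(1+\max_{1\le j\le n-1}(q_j)_+/|q_n|\bigr)$ and the bound $2n$ uses the concave-in-$j$ structure of the trace estimates that the paper later derives.
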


We will provide upper bounds on these dimensions in terms of parameters in the model. There is a standard methodology to derive these estimates. We consider a volume element in the phase space, and try and derive conditions that will cause it to decay, as time goes forward. If $\mathcal{A}$ is the global attractor of the semigroup $\left\{S(t)\right\}_{t \geq 0}$ in $H$ associated with \eqref{(1.1)}-\eqref{(1.5)} , we can define

\begin{equation}\label{qn1}
 q_{n}(t) = \mathop{\sup}\limits_{u_{0}\in  A}\mathop{\sup}\limits_{g_{i} \in H, ||g_{i}||=1,1\leq i \leq n}\frac{1}{t}\int^{t}_{0}Tr(\Delta U(\tau)+F^{'}(S(\tau)u_{0})\circ Q_{n}(\tau)d\tau
\end{equation}

where

\begin{equation}
q_{n}=\mathop{\limsup}_{t \rightarrow \infty}q_{n}(t)
\end{equation} 

Here F is the nonlinear map in \eqref{(1.1)}-\eqref{(1.5)}.
also $Q_{n}$ is the orthogonal projection of the phase space $H$ onto the subspace spanned by $U_{1}(t),U_{2}(t),\cdots, U_{n}(t)$, with 
\begin{equation*}
U_{i}(t)=L(S(t)u_{0})g_{i}, i=1,2,..n.
\end{equation*}
$L(S(t)u_{0})$ is the Frechet derivative of the map $S(t)$ at $u_{0}$, with t fixed.
Also for this model, $L(S(t)u_{0})g=U(t)=(U,V,W,Z)$, where $u=(u,v,w,z)$ is a solution to \eqref{(1.1)}-\eqref{(1.5)}, $\phi_{j}=(\phi^{1}_{j}...\phi^{3}_{j},)$ are an orthonormal basis for the subspace $Q_{n}(\tau)H$ and $(U,V,W,Z)$ are strong solutions to the variational equations for the reaction diffusion system \eqref{(1.1)}-\eqref{(1.5)} whose exact form is found in \cite{Y11b}.
The estimate on the trace follows by standard means \cite{T97}, and we can now state the following result as a direct application of lemma \ref{t1a},

\begin{theorem}
\label{gattrd}
Consider the reaction diffusion equation described via, \eqref{(1.1)}-\eqref{(1.5)}. The global attractor $\mathcal{A}$ of the system, for spatial dimension $N \leq 3$, is of finite dimension.
Furthermore, explicit upper bounds for the attractors Hausdorff and fractal dimensions, are given as follows

\begin{equation}
d_{H}(A) \leq \left(\frac{C(D_i,a,b,c,d,\alpha,\beta)}{ K_{1}} \right)^{\frac{3}{2}}|\Omega| +1 
\end{equation}

\begin{equation}
 d_{F}(A) \leq 2\left(\frac{(C(D_i,a,b,c,d,\alpha,\beta)}{ K_{1}} \right)^{\frac{3}{2}}|\Omega| +2 
\end{equation}

\end{theorem}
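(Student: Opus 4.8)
The plan is to invoke Lemma \ref{t1a}, so the entire task reduces to exhibiting an integer $n$ for which $q_n<0$ and then reading off the explicit threshold. First I would write the linearized (variational) system along a trajectory $u=(u,v,w,z)=S(\tau)u_0$ on the attractor, whose solution $U=(U,V,W,Z)$ is governed by the operator $\mathcal{L}(\tau)=\mathbf{D}\Delta+F'(S(\tau)u_0)$ appearing in \eqref{qn1}, where $\mathbf{D}=\mathrm{diag}(a,b,c,d)$ and $F'$ is the Jacobian of the reaction terms $(f,g,h,k)$. For any orthonormal family $\{\phi_j\}_{j=1}^{n}$ in $H$ spanning $Q_n(\tau)H$, the trace $\mathrm{Tr}(\mathcal{L}(\tau)\circ Q_n)=\sum_{j=1}^{n}(\mathcal{L}(\tau)\phi_j,\phi_j)$ splits into a dissipative diffusion contribution and a reaction contribution, which I would estimate separately.

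For the diffusion part, integration by parts gives $\sum_{j=1}^{n}(\mathbf{D}\Delta\phi_j,\phi_j)=-\sum_{j=1}^{n}\int_\Omega(a|\nabla\phi_j^1|^2+\cdots+d|\nabla\phi_j^4|^2)\,d\mathbf{x}\le -d_{\min}\sum_{j=1}^{n}\|\nabla\phi_j\|_2^2$, with $d_{\min}=\min(a,b,c,d)$. The crucial positive ingredient is the generalized Sobolev--Lieb--Thirring inequality, see \cite{T97}: for an orthonormal family in $H$ contained in $Y$ one has $\sum_{j=1}^{n}\|\nabla\phi_j\|_2^2\ge c_N|\Omega|^{-2/N}n^{1+2/N}$, which in dimension $N=3$ reads $\ge c\,|\Omega|^{-2/3}n^{5/3}$. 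For the reaction part, the pointwise bound $|(F'\phi_j,\phi_j)|\le\|F'(u,v,w,z)\|\,|\phi_j|^2$ together with the fact that the attractor is a bounded set in $H^2(\Omega)$ by Lemma \ref{lemh21}, and the embedding $H^2(\Omega)\hookrightarrow L^\infty(\Omega)$ valid for $N\le 3$, makes every entry of $F'$---in particular the sign-changing terms $2uv$, $u^2$, $2wz$, $w^2$---uniformly bounded on $\mathcal{A}$; hence $\sum_{j=1}^{n}(F'\phi_j,\phi_j)\le C\,n$ with $C=C(D_i,a,b,c,d,\alpha,\beta)$ independent of $n$ and of the point on the attractor.

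Combining the two estimates gives $q_n\le -K_1|\Omega|^{-2/3}n^{5/3}+C\,n$, where $K_1$ absorbs $d_{\min}$ and the Lieb--Thirring constant. Requiring the dissipative term to dominate forces $q_n<0$ as soon as $n^{2/3}>C|\Omega|^{2/3}/K_1$, i.e. $n>(C/K_1)^{3/2}|\Omega|$; taking the least such integer yields $n\le (C/K_1)^{3/2}|\Omega|+1$, and Lemma \ref{t1a} then delivers exactly the stated bounds $d_H(\mathcal{A})\le (C/K_1)^{3/2}|\Omega|+1$ and $d_F(\mathcal{A})\le 2(C/K_1)^{3/2}|\Omega|+2$.

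The step I expect to be the main obstacle is the reaction trace: because of the opposite-signed cubic coupling, $F'$ is not dissipative and cannot simply be discarded, so it must be bounded uniformly over the attractor. This is precisely where the earlier regularity analysis pays off---the uniform $H^2$ absorbing ball of Lemma \ref{lemh21}, together with $H^2\hookrightarrow L^\infty$ in dimension $N\le 3$, is exactly what converts the problematic nonlinearity into a constant $C$ independent of $n$, allowing the superlinear growth $n^{5/3}$ of the diffusion term to win. Verifying that the trace formula \eqref{qn1} is applicable---uniform differentiability of $S(t)$ on $\mathcal{A}$ and integrability of the variational solutions---is routine given the bounds already established, and I would cite \cite{T97} for it rather than reproducing the argument.
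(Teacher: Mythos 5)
Your proposal is correct and follows the same overall skeleton as the paper: reduce to Lemma \ref{t1a}, split the trace in \eqref{qn1} into a diffusive part controlled from below by the generalized Sobolev--Lieb--Thirring inequality ($\sum_j|\nabla\phi_j|_2^2\geq K_1 n^{5/3}|\Omega|^{-2/3}$ for $N=3$) and a reaction part bounded by $Cn$, and then read off the first integer $n$ with $q_n<0$. The one place where you genuinely diverge is the reaction trace. The paper does \emph{not} use an $L^\infty$ bound on $F'$: it estimates the quadratic-coefficient contributions by $J_1,J_2\leq C\sum_j|\phi_j|_4^2$ (Hölder against $L^p$ bounds on the solution components), then applies Gagliardo--Nirenberg with $\theta=3/4$ to get $|\phi_j|_4^2\lesssim|\nabla\phi_j|_2^{3/2}$ and Young's inequality to absorb a fraction of this into the gradient term, leaving the remainder as $Cn$. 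You instead exploit the uniform $H^2$ bound on the attractor from Lemma \ref{lemh21} together with $H^2(\Omega)\hookrightarrow L^\infty(\Omega)$ for $N\leq3$ to make every entry of $F'$ (in particular $2uv$, $u^2$, $2wz$, $w^2$) uniformly bounded on $\mathcal{A}$, so that orthonormality alone gives $\sum_j(F'\phi_j,\phi_j)\leq Cn$ with no interpolation needed. Both routes land on $q_n\leq -K_1 n^{5/3}|\Omega|^{-2/3}+Cn$ and hence the stated bounds. Your version is shorter and leverages the improved regularity that is one of the paper's main contributions, at the price of tying the trace estimate to the $H^2$ theory (and to $N\leq3$, though that restriction is already in force); the paper's interpolation route needs only $L^p$ control of the solution and the $H^1$ information on the test functions, which is why it survives in settings where only an $L^2$ or $H^1$ attractor is available. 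Your closing remarks on uniform differentiability of $S(t)$ are handled the same way the paper handles them, by citation.
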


\subsection{Lower bound on the haursdorff dimension of
the global attractor}

We consider the reaction diffusion equation described via, \eqref{(1.1)}-\eqref{(1.5)}.
A spatially uniform stationary solution is%
\begin{equation}
\label{1.7}
u\equiv w\equiv \alpha ,\ \ v\equiv z\equiv \frac{\beta }{\alpha }. 
\end{equation}%
If $A$ is the attractor of the brusselator and $L\left( t\right) $ is the
frechet derivative of the semigroup associated to the brusselator, then%
\begin{equation*}
\dim _{H}A\geq n,
\end{equation*}%
where $n$ is the first integer satisfying%
\begin{equation*}
\emph{Re}\lambda_{n}>0>\emph{Re}\lambda_{n+1},
\end{equation*}%
where $\left\{ \lambda _{n}\right\} $ represents the set of the eigenvalues
of $L\left( t\right) $. If we put%
\begin{equation*}
L\left( t\right) \left( U_{0},V_{0},W_{0},Z_{0}\right) =\left(
U,V,W,Z\right) ,\ \ t>0,
\end{equation*}%
\begin{eqnarray}
\frac{\partial U}{\partial t}-a\Delta U &=&\left( -\left( \beta
+1+D_{1}\right) +2uv\right) U\ \ +u^{2}V\ +D_{1}W,\text{ \ \ \ \ \ \ \ \ \ \
\ \ }   \\
\frac{\partial V}{\partial t}-b\Delta V &=&\left( \beta -2uv\right) U\
-\left( D_{2}+u^{2}\right) V\ \ \ +D_{2}Z,\text{\ \ \ \ \ \ \ \ \ \ \ \ \ \
\ \ \ \ \ \ \ \ \ \ \ \ \ \ }   \\
\frac{\partial W}{\partial t}-c\Delta W &=&D_{3}U\ \ \ +\left( -\left( \beta
+1+D_{3}\right) +2wz\right) W\ \ \ \ \ \ +w^{2}Z,\ \ \ \ \ \ \ \ \ \ \ \ \ \
\ \ \    \\
\frac{\partial Z}{\partial t}-d\Delta Z &=&\ D_{4}V\ \ \ \ +\left( \beta
-2wz\right) W-\left( D_{4}+w^{2}\right) Z.\text{\ }\ \ \ \ \ \ \ \ \ \ \ \ \
\ \ \ \ \ \ \ \ \ \ \ \ \ \ \ \ \ \   
\end{eqnarray}%
Replace in the linearized system, we get%
\begin{eqnarray}
\frac{\partial U}{\partial t}-a\Delta U &=&-\left( -\beta +1+D_{1}\right) U\
+\alpha ^{2}V\ \ \ +D_{1}W,\text{ \ \ \ }  \\
\frac{\partial V}{\partial t}-b\Delta V &=&-\beta U\ -\left( D_{2}+\alpha
^{2}\right) V\ +D_{2}Z,\text{\ \ \ \ \ \ \ \ \ \ \ \ \ \ \ \ \ \ \ \ \ \ \ \ 
}   \\
\frac{\partial W}{\partial t}-c\Delta W &=&\ \ D_{3}U\ -\left( -\beta
+1+D_{3}\right) W\ \ \ \ \ \ +\alpha ^{2}Z,\ \ \ \ \ \ \ \ \ \ \ \  
 \\
\frac{\partial Z}{\partial t}-d\Delta Z &=&\ D_{4}V\ \ -\beta W-\left(
D_{4}+\alpha ^{2}\right) Z,\text{\ \ \ }\ \ \ \ \ \ \ \ \ \ \ \ \ \ \ \ \ \
\ \ \ \  
\end{eqnarray}%
The generator $B$ of the linear semigroup $L(t)$%
\begin{equation*}
B=\left( 
\begin{array}{cccc}
a\Delta -\left( -\beta +1+D_{1}\right)  & \alpha ^{2} & D_{1} & 0 \\ 
-\beta  & b\Delta -\left( D_{2}+\alpha ^{2}\right)  & 0 & D_{2} \\ 
D_{3} & 0 & c\Delta -\left( -\beta +1+D_{3}\right)  & \alpha ^{2} \\ 
0 & \ D_{4} & -\beta  & d\Delta -\left( D_{4}+\alpha ^{2}\right) 
\end{array}%
\right) .
\end{equation*}%
Let $\mu _{1}=0<\mu _{2}<...<\mu _{n}<...$ the eigenvalues of $-\Delta $
with the homogeneous Neumann boundary and $\varphi _{1},\ \varphi
_{2},...,\varphi _{n},...$ their associated eigenvectors, we try to find the
eigenvectors of the operator $B$ on the form $\Phi _{i}=\left(
p_{i},q_{i},r_{i},s_{i}\right) \varphi _{i},\ i=1,...,n,...$, with
associated eigenvalues $\lambda _{1},\ \lambda _{2},...,\lambda _{n},...$,
then we have%
\begin{equation*}
\left( B-\lambda _{i}I\right) \Phi _{i}=0,
\end{equation*}%
so%
\begin{eqnarray*} 
\left\vert 
\begin{array}{cccc}
-a\mu _{i}-\left( -\beta +1+D_{1}\right) -\lambda _{i} & \alpha ^{2} & D_{1}
& 0 \\ 
-\beta  & -b\mu _{i}-\left( D_{2}+\alpha ^{2}\right) -\lambda _{i} & 0 & 
D_{2} \\ 
D_{3} & 0 & -c\mu _{i}-\left( -\beta +1+D_{3}\right) -\lambda _{i} & \alpha
^{2} \\ 
0 & \ D_{4} & -\beta  & -d\mu _{i}-\left( D_{4}+\alpha ^{2}\right) -\lambda
_{i} \\
\end{array}%
\right\vert  \\
\end{eqnarray*}%
$= 0 $.

The principal determinant of this algebraic linear system is a fourth degree
polynomial of unknown $\lambda _{i}.$ It admits a root with no positive real
part if the real part of the sum of its four roots is not positive, that is%
\begin{equation*}
-\left( a+b+c+d\right) \emph{Re}\mu _{i}+\left[ 2\left( \beta -1-\alpha
^{2}\right) -\left( D_{1}+D_{2}+D_{3}+D_{4}\right) \right] >0
\end{equation*}%
Since%
\begin{equation*}
\mu _{i}\simeq Ki^{\frac{2}{N}}
\end{equation*}%
then%
\begin{equation*}
\frac{\left[ 2\left( \beta -1-\alpha ^{2}\right) -\left(
D_{1}+D_{2}+D_{3}+D_{4}\right) \right] }{\left( a+b+c+d\right) }\simeq Ki^{%
\frac{2}{N}}
\end{equation*}%
Thus if%
\begin{equation*}
\left( D_{1}+D_{2}+D_{3}+D_{4}\right) <2\left( \beta -1-\alpha ^{2}\right) 
\end{equation*}%

We can state the following result

\begin{theorem}
\label{gattrdlower}
Consider the reaction diffusion equation described via, \eqref{(1.1)}-\eqref{(1.5)}. The global attractor $\mathcal{A}$ of the system, for spatial dimension $N \leq 3$, has explicit lower bounds for its Hausdorff dimension, in particular 
there exists a universal constant $K^{\prime }$ such that 

\begin{equation*}
\dim _{H}A\geq K^{\prime }\left\{ \frac{\left[ 2\left( \beta -1-\alpha
^{2}\right) -\left( D_{1}+D_{2}+D_{3}+D_{4}\right) \right] }{\left(
a+b+c+d\right) }\right\} ^{\frac{N}{2}}.
\end{equation*}

\end{theorem}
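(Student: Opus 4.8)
The plan is to obtain the lower bound through the classical instability principle for dissipative semigroups (Babin--Vishik; see also \cite{T97}): an equilibrium lying on the attractor carries a local unstable manifold that is itself a bounded invariant set, hence contained in $\mathcal{A}$, and whose dimension equals the number of eigenvalues of the linearization with strictly positive real part. Concretely, I would first note that the spatially homogeneous steady state $(u,v,w,z)\equiv(\alpha,\beta/\alpha,\alpha,\beta/\alpha)$ of \eqref{(1.7)} is admissible precisely under the Neumann boundary conditions used in this section (a nonzero constant cannot satisfy homogeneous Dirichlet data), and that it is a fixed point of $\{S(t)\}_{t\geq 0}$. Since $\mathcal{A}$ attracts every bounded set it must contain this equilibrium together with its local unstable manifold $W^{u}_{\mathrm{loc}}$, so that $\dim_{H}\mathcal{A}\geq \dim W^{u}_{\mathrm{loc}}$, which equals the number of eigenvalues of the generator $B$ with positive real part, counted with multiplicity. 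The problem thus reduces to bounding that count from below using the spectral decomposition of $B$ already assembled above.

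Next I would carry out the eigenvalue count block by block. For each Laplacian eigenvalue $\mu_i$ the operator $B$ acts on $(p_i,q_i,r_i,s_i)\varphi_i$ through a $4\times 4$ real matrix, whose four eigenvalues are the roots of the characteristic polynomial displayed above; since non-real roots of a real matrix occur in conjugate pairs, the sum of the real parts of these four roots equals the trace, already computed as
\begin{equation*}
-(a+b+c+d)\mu_i+\left[2(\beta-1-\alpha^{2})-(D_1+D_2+D_3+D_4)\right].
\end{equation*}
Consequently, whenever
\begin{equation*}
\mu_i<\Lambda^{*}:=\frac{2(\beta-1-\alpha^{2})-(D_1+D_2+D_3+D_4)}{a+b+c+d},
\end{equation*}
the trace is positive and the block contributes at least one eigenvalue of $B$ with positive real part. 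Under the standing hypothesis $(D_1+D_2+D_3+D_4)<2(\beta-1-\alpha^{2})$ we have $\Lambda^{*}>0$, so the Neumann ground mode $\mu_1=0$ already qualifies and $\{i:\mu_i<\Lambda^{*}\}$ is nonempty and finite.

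Finally I would quantify the cardinality $n$ of this set and translate it into the stated power law. Because blocks attached to distinct $\mu_i$ live over orthogonal eigenfunctions $\varphi_i$ of $-\Delta$, the unstable directions selected from different blocks are automatically independent, so the number of unstable eigenvalues of $B$ is at least $n$, and hence $\dim_{H}\mathcal{A}\geq n$. Invoking the Weyl asymptotics $\mu_i\simeq K i^{2/N}$ recorded above, the condition $\mu_i<\Lambda^{*}$ becomes $i<(\Lambda^{*}/K)^{N/2}$, giving $n\simeq K'(\Lambda^{*})^{N/2}$ with $K'=K^{-N/2}$; substituting $\Lambda^{*}$ yields
\begin{equation*}
\dim_{H}\mathcal{A}\geq K'\left\{\frac{2(\beta-1-\alpha^{2})-(D_1+D_2+D_3+D_4)}{a+b+c+d}\right\}^{\frac{N}{2}}.
\end{equation*}

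The main obstacle, and the step requiring the most care, is the rigorous passage from "positive trace of the block" to a genuine lower bound on $\dim W^{u}_{\mathrm{loc}}$: one must verify that $B$ generates an analytic semigroup with compact resolvent and no spectrum obstructing the construction of the unstable manifold, and that the selected unstable eigenvectors indeed span a subspace tangent to $W^{u}_{\mathrm{loc}}$ inside $\mathcal{A}$. A secondary technical point is that the Weyl law is only asymptotic, so extracting the clean exponent $N/2$ with a single universal constant $K'$ demands absorbing the lower-order counting error; this is exactly why the conclusion is stated as an inequality up to $K'$ rather than as an identity.
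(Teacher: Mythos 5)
Your proposal follows essentially the same route as the paper: linearize at the spatially homogeneous equilibrium $(\alpha,\beta/\alpha,\alpha,\beta/\alpha)$, decompose the generator $B$ into $4\times 4$ blocks over the Neumann eigenfunctions of $-\Delta$, use positivity of the block trace $-(a+b+c+d)\mu_i+\left[2(\beta-1-\alpha^2)-(D_1+D_2+D_3+D_4)\right]$ to detect at least one unstable eigenvalue per block, and count the admissible modes via the Weyl asymptotics $\mu_i\simeq Ki^{2/N}$ to obtain the $N/2$ power law. Your write-up is in fact somewhat more careful than the paper's sketch (in particular the explicit appeal to the unstable-manifold principle and the remark that the constant steady state is only compatible with Neumann data), but the underlying argument is the same.
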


\section{Numerical Simulations}
\label{8}

We now carry out numerical simulations of \eqref{(1.1)}-\eqref{(1.5)}. 
 In order to explore the dynamics of the model in 2d, we use a finite difference method. A forward difference scheme is 
used for the reaction terms. For the diffusion terms, a standard five point explicit finite difference scheme is 
used. The numerical simulation is carried out at different time 
levels for two dimensional spatial model system. The system of equations is numerically solved over 200 $\times$ 200 mesh points , on a domain of size $L_x \times L_y$, where $L_x=L_y=500$.
with spatial resolution  $\Delta x$ = $\Delta y$ = 1 and time step  $\Delta t =  \frac{1}{24}$. The initial condition used is a small 
perturbation about $(2, 2.75, 2, 2.75)$ and the boundary conditions used are no flux Neumann conditions. Note this is fine, as the results hold for Neumann boundary conditions as well.

\begin{figure}[!ht]
	\begin{center}
		\includegraphics[width=0.35\textwidth]{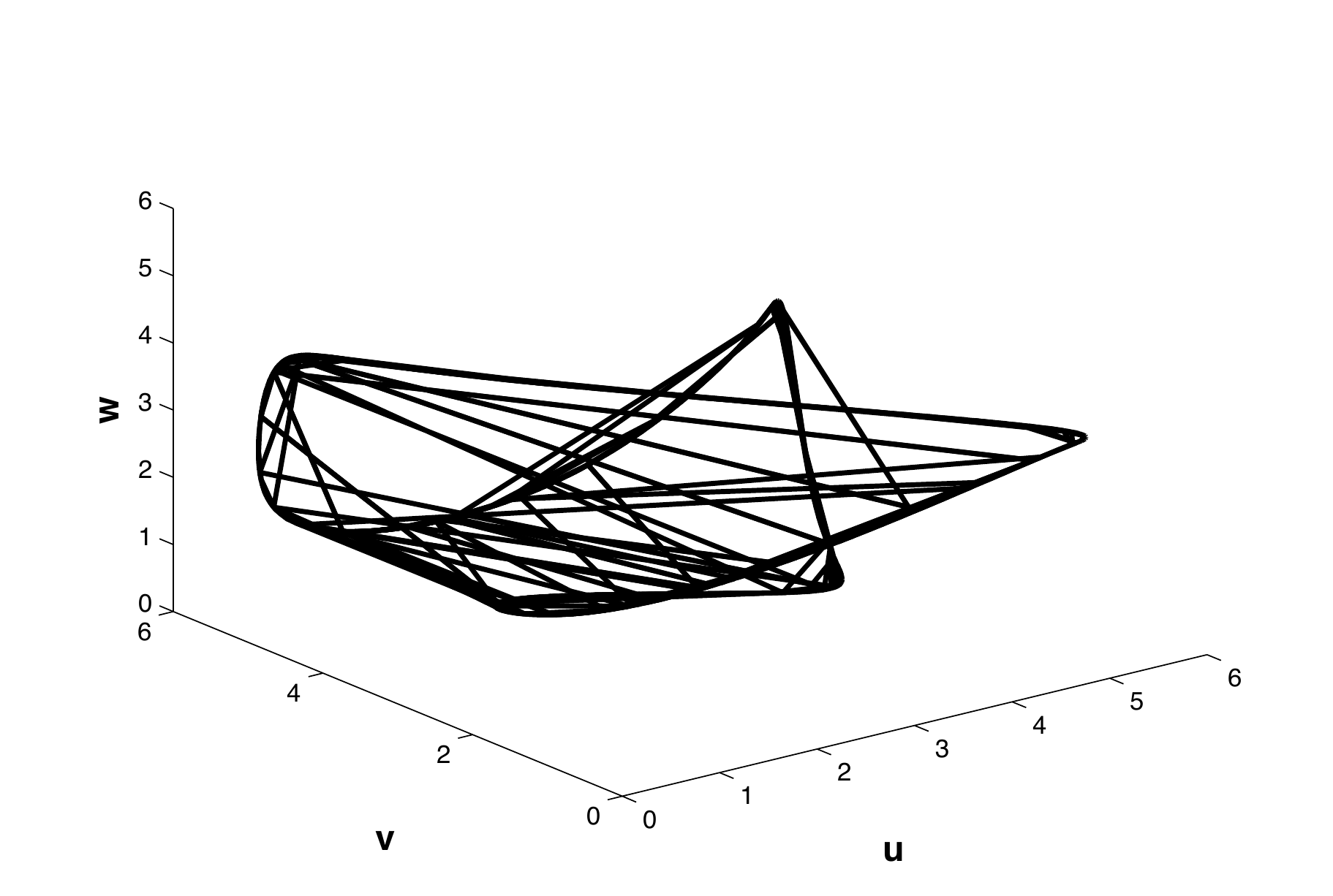}
			\includegraphics[width=0.35\textwidth]{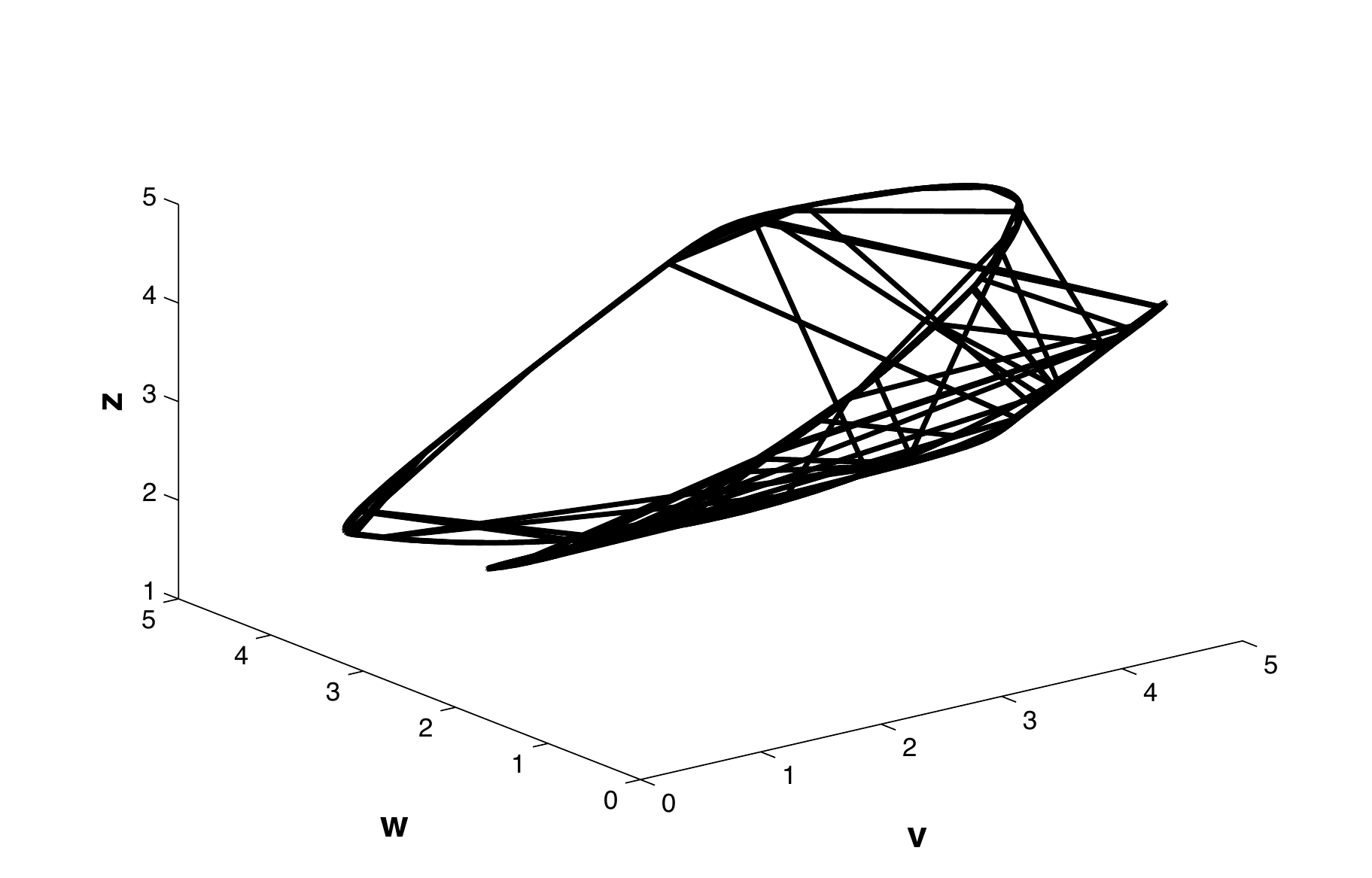}
			\includegraphics[width=0.35\textwidth]{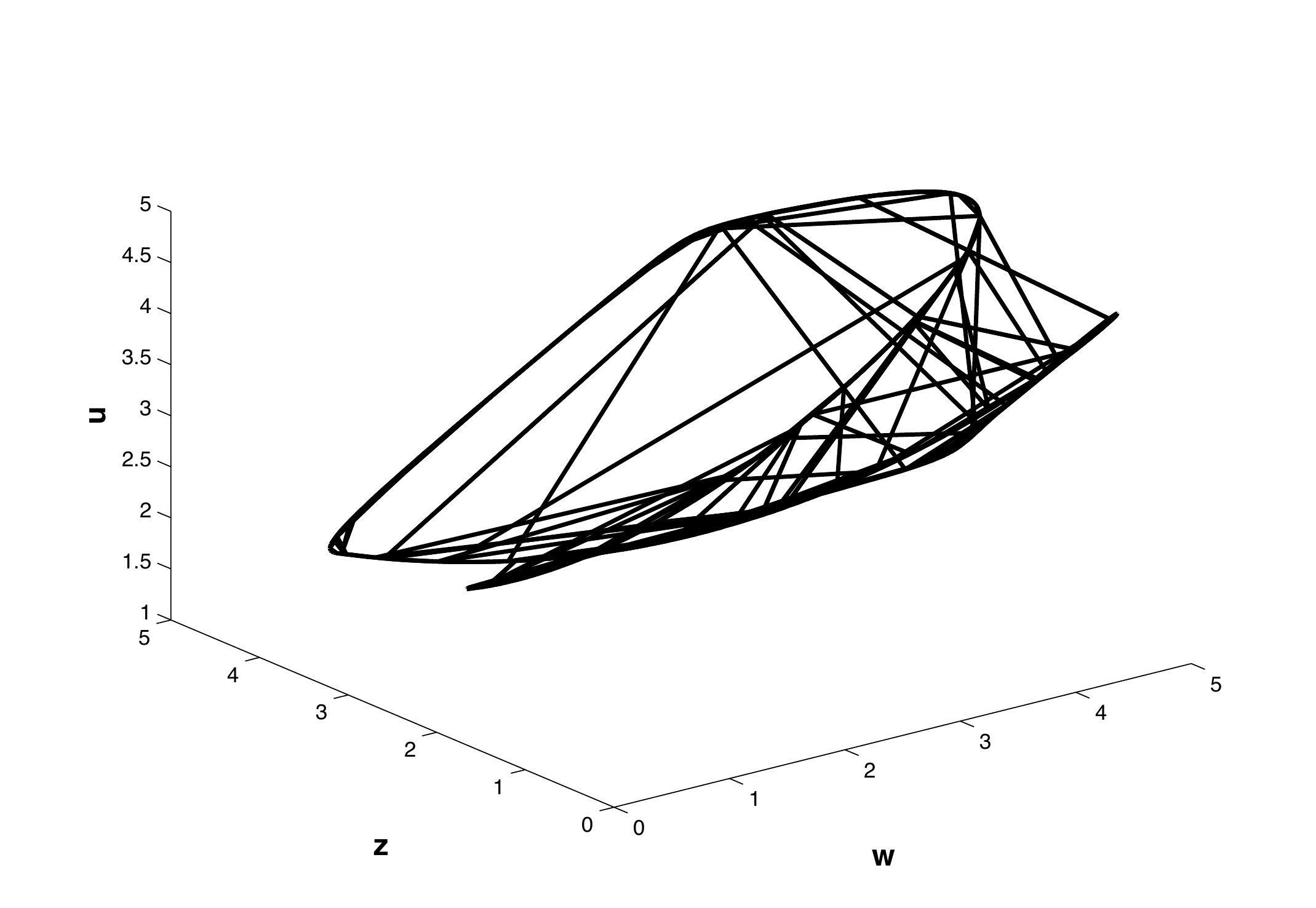}
			\end{center}
		\caption{Attarctor for system \eqref{(1.1)}-\eqref{(1.5)} in one dimensional domain obtained at $t=10000$.}
	\label{fig:II}
\end{figure}

\begin{table}[ht]
\caption{Parameters used in the simulations in fig 1,2,3,4 }
\centering  
\begin{tabular}{ c | c | c | c | c | c | c | c | c | c}
\hline\hline                        
 $\alpha$ &  $\beta$ & $D_1$ & $D_2$ & $D_3$ & $D_4$ & $a$ & $b$ & $c$ & $d$ \\ [0.5ex] 
\hline   
  2 & 5.5 & 0.0126 & 0.126 & 0.0125  &0.125  & $10^{-6}$ & $10^{-6}$ & $10^{-6}$ & $10^{-6}$  \\
\hline 
\end{tabular}
\label{table:parameters} 
\end{table}

\begin{figure}[!ht]
	\begin{center}
		\includegraphics[width=0.6\textwidth]{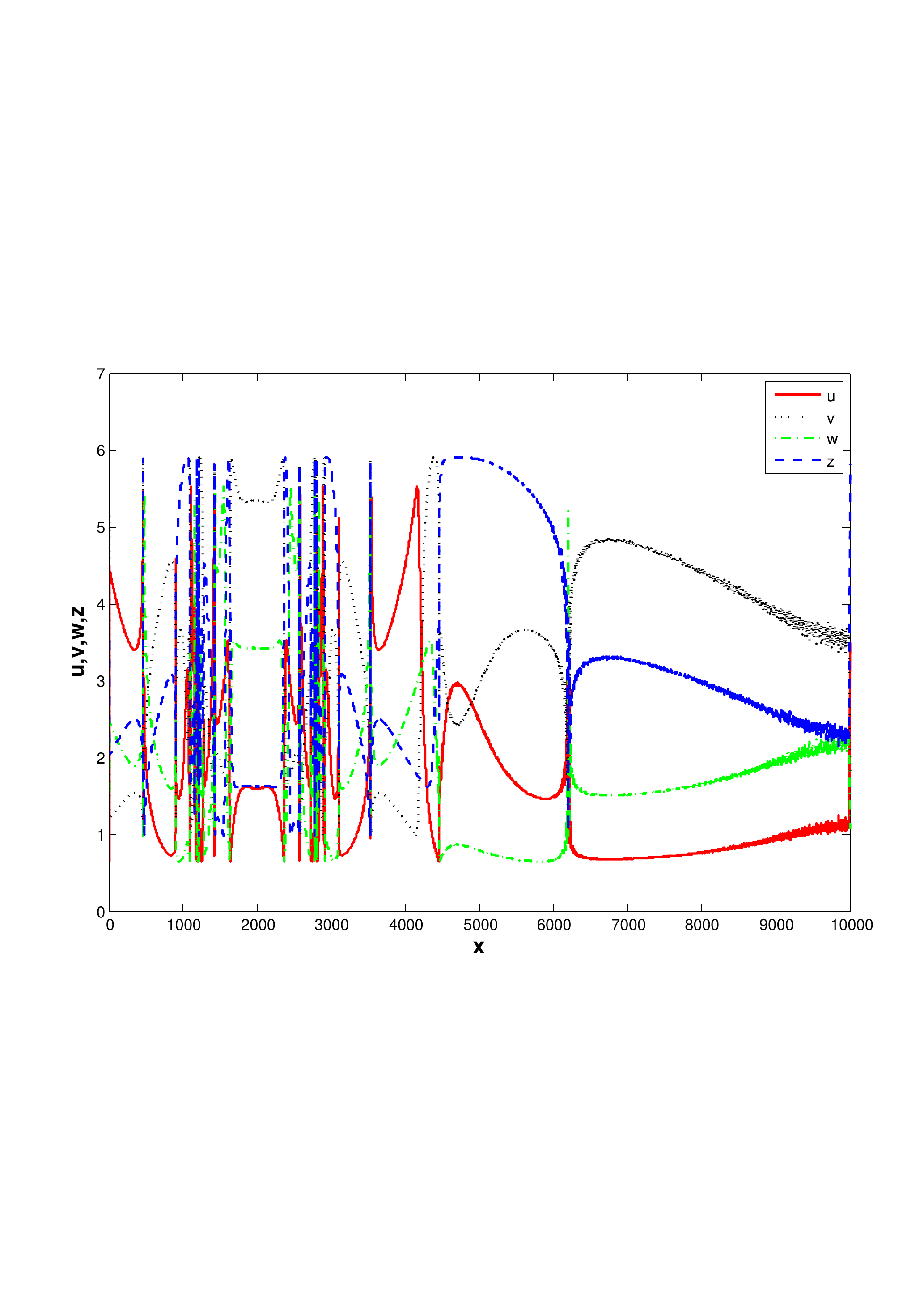}
		\end{center}
		\caption{Spatial distribution of $u,v,w,z$ species at $t=10000$.}
	\label{fig:I}
\end{figure}
%


\begin{figure}[!htp]
	\begin{center}
	\includegraphics[scale=0.35]{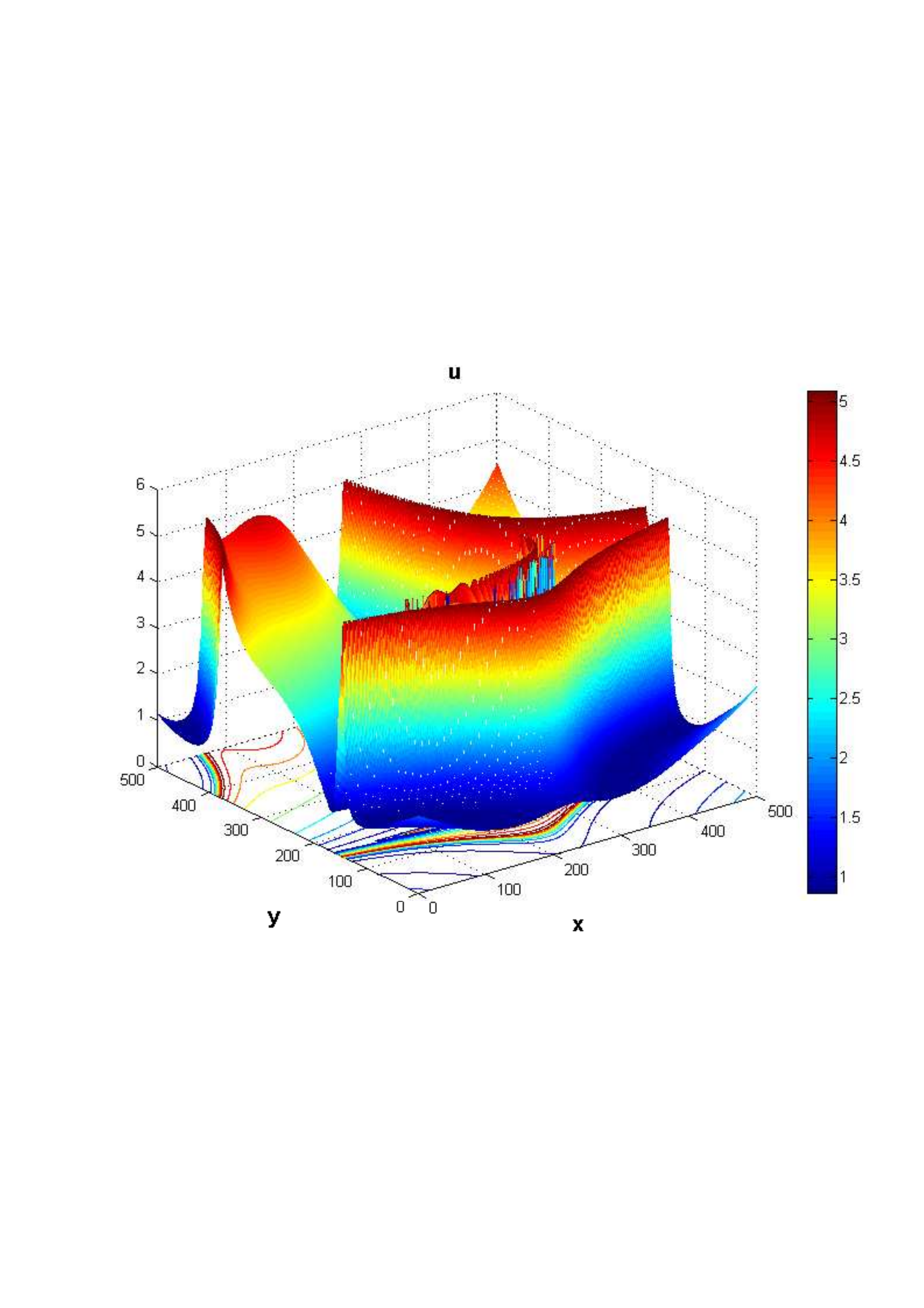}
			\includegraphics[scale=0.35]{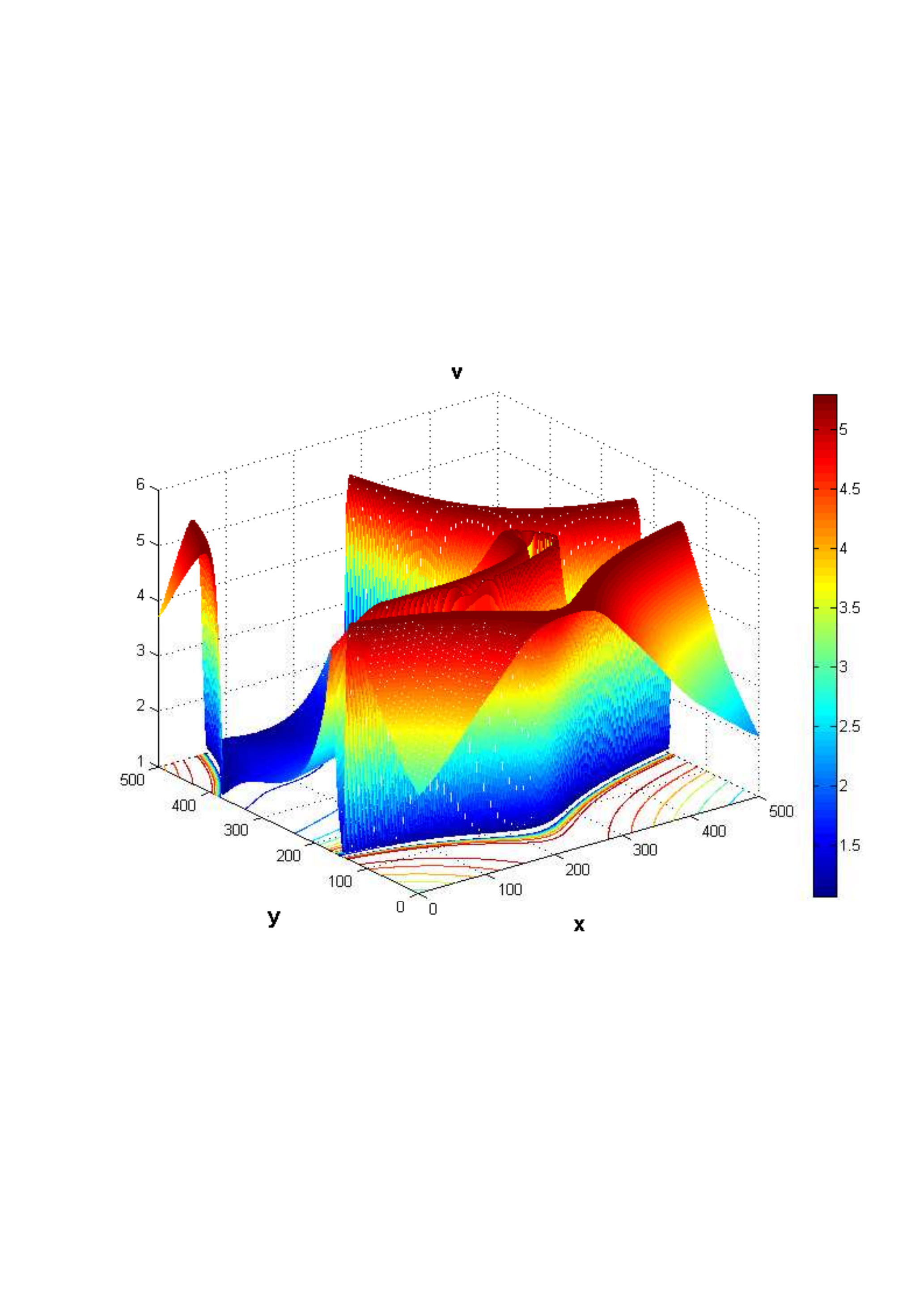}
			\includegraphics[scale=0.35]{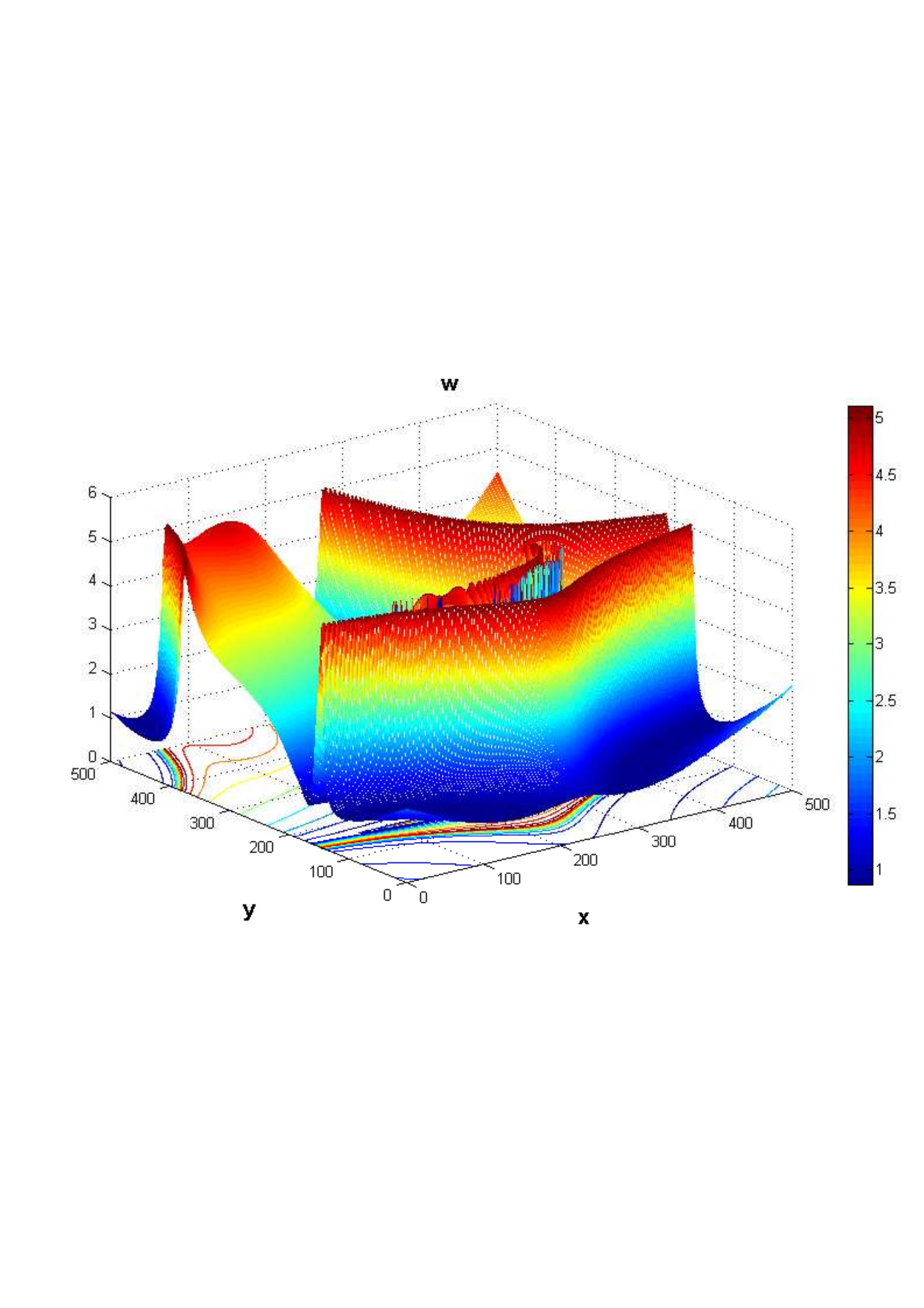}
			\includegraphics[scale=0.35]{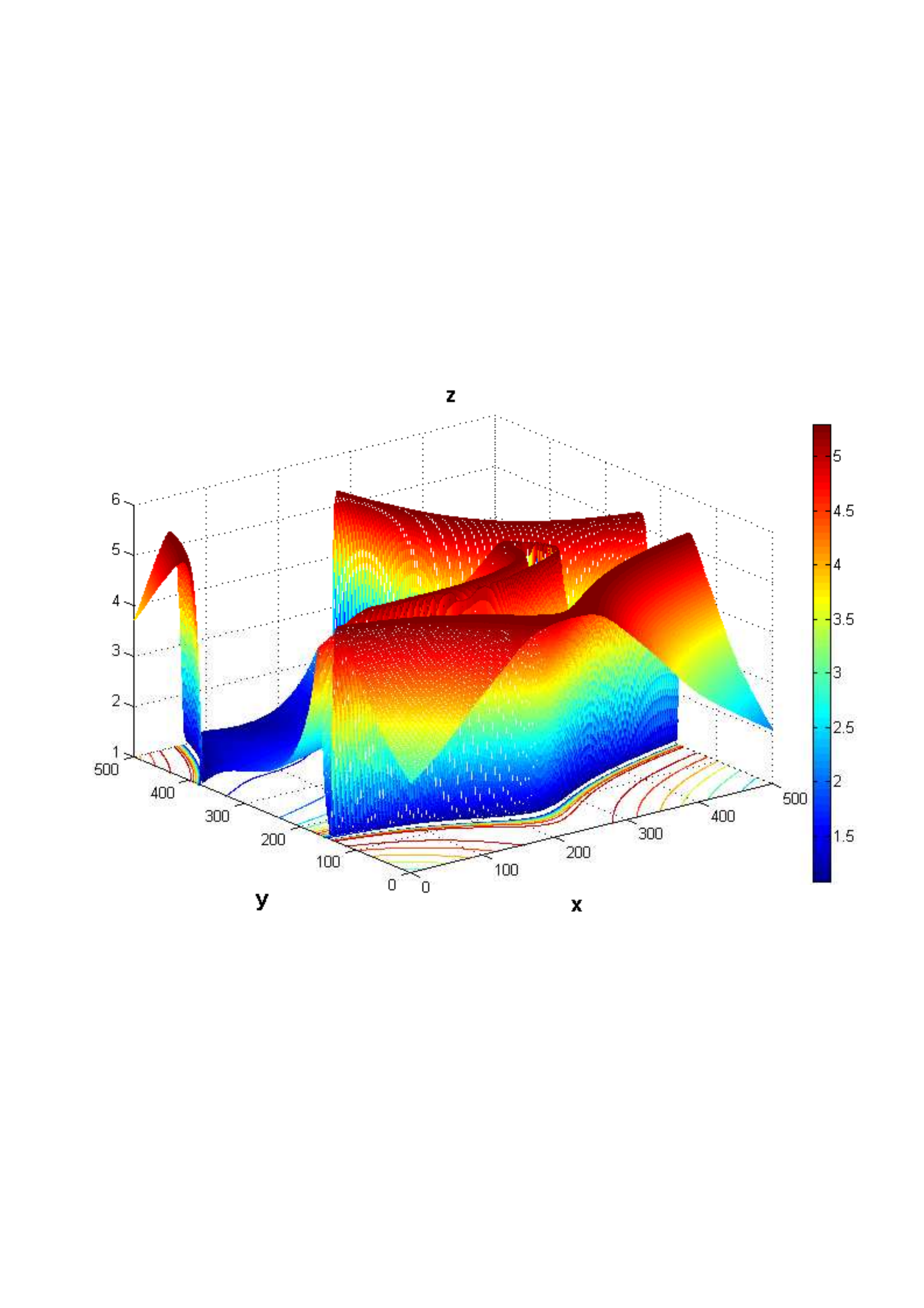}
		 \end{center}
		\caption{ The two dimensional spatial attractor of the system  \eqref{(1.1)}-\eqref{(1.5)} at $t=50$. }
		\end{figure}

\begin{figure}[!htp]
	\begin{center}
		\includegraphics[scale=0.35]{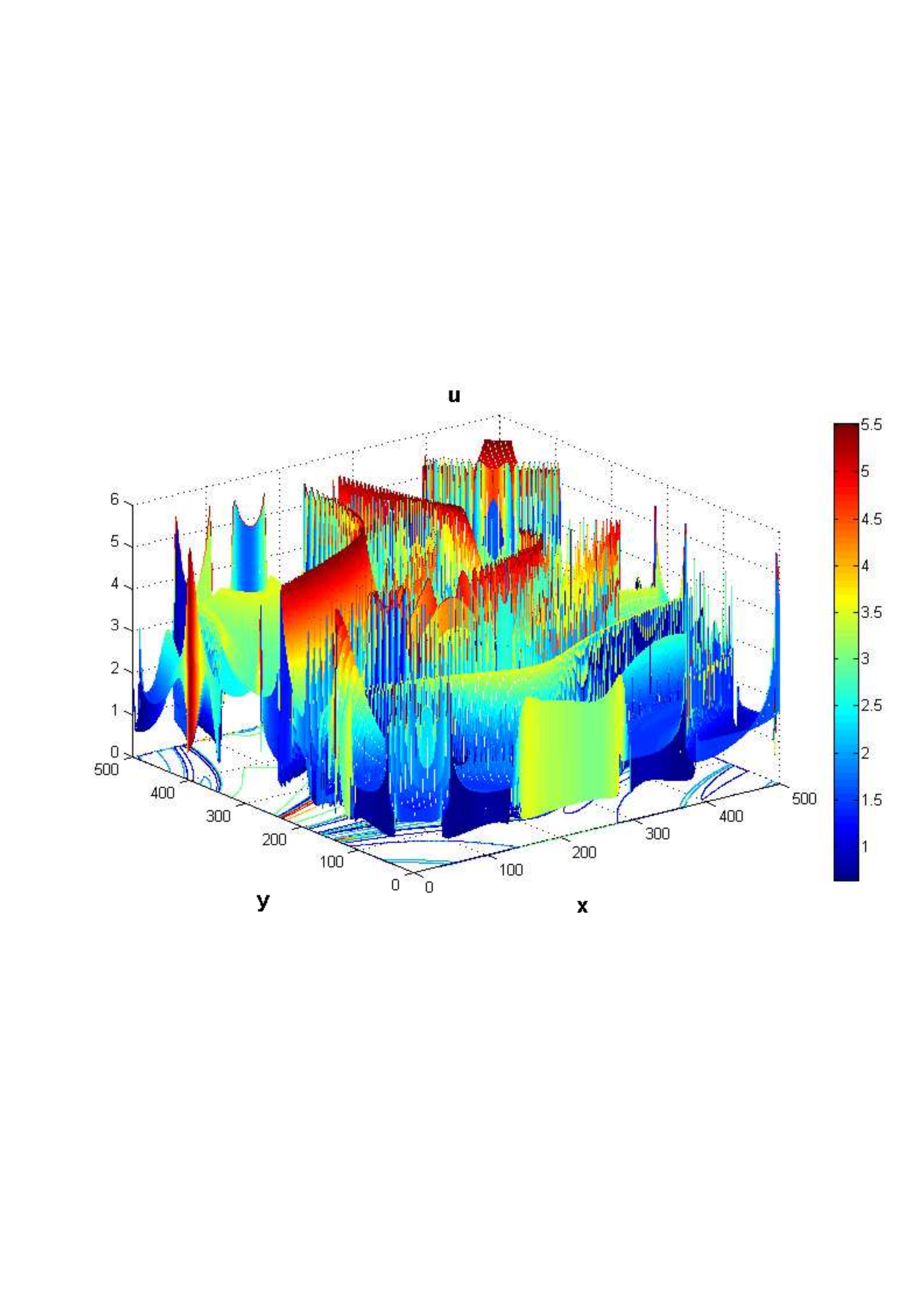}
			\includegraphics[scale=0.35]{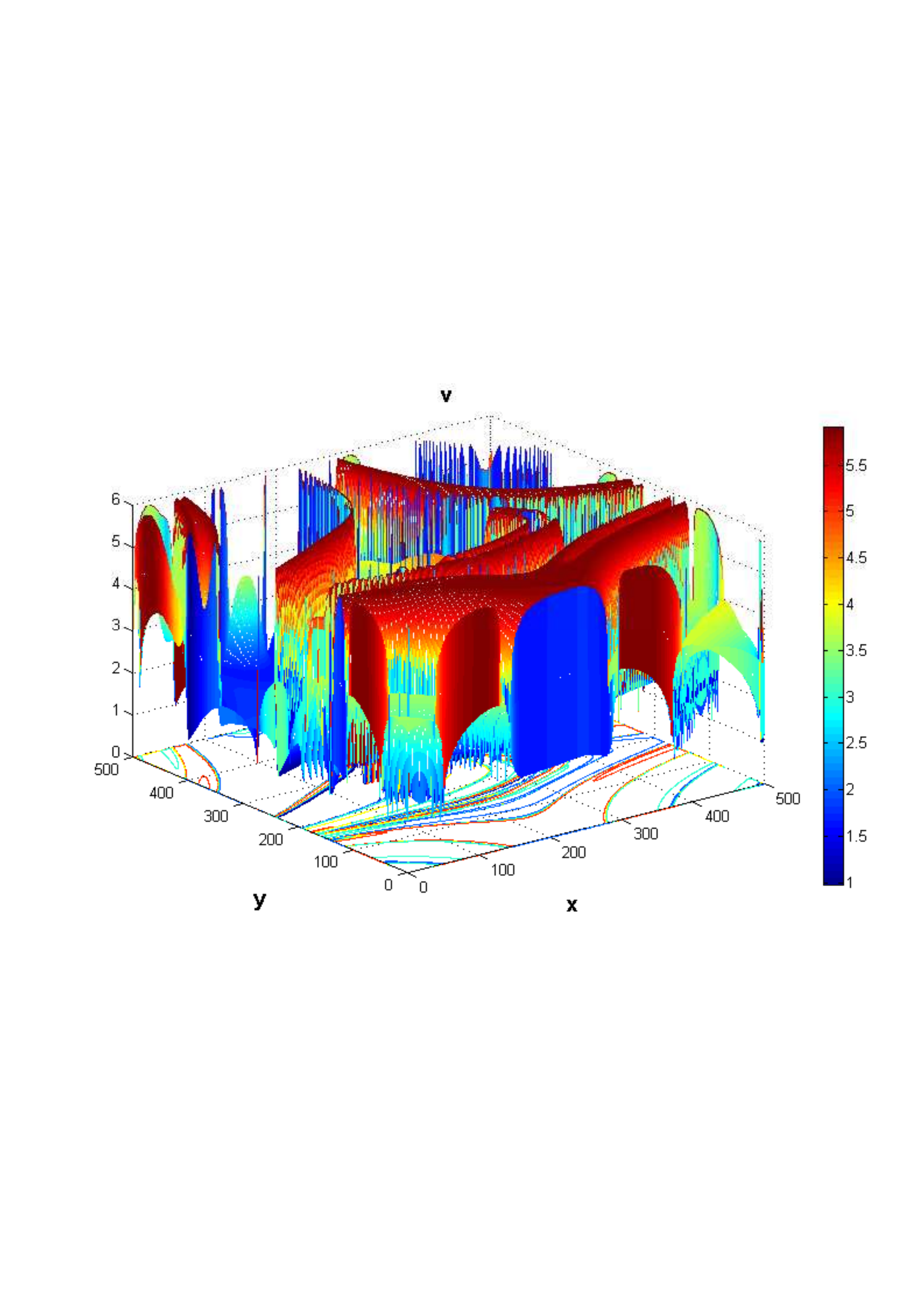}
			\includegraphics[scale=0.35]{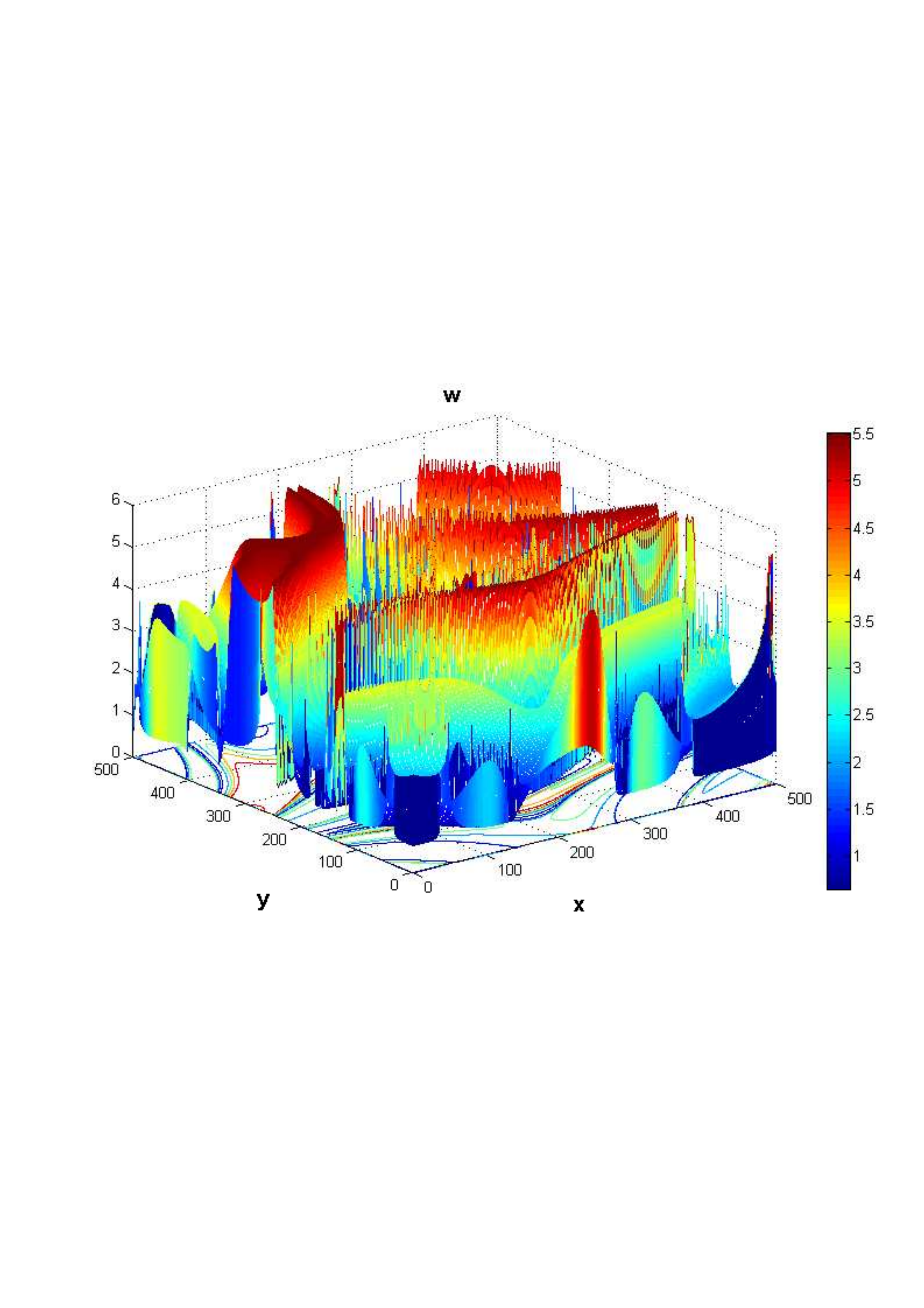}
			\includegraphics[scale=0.35]{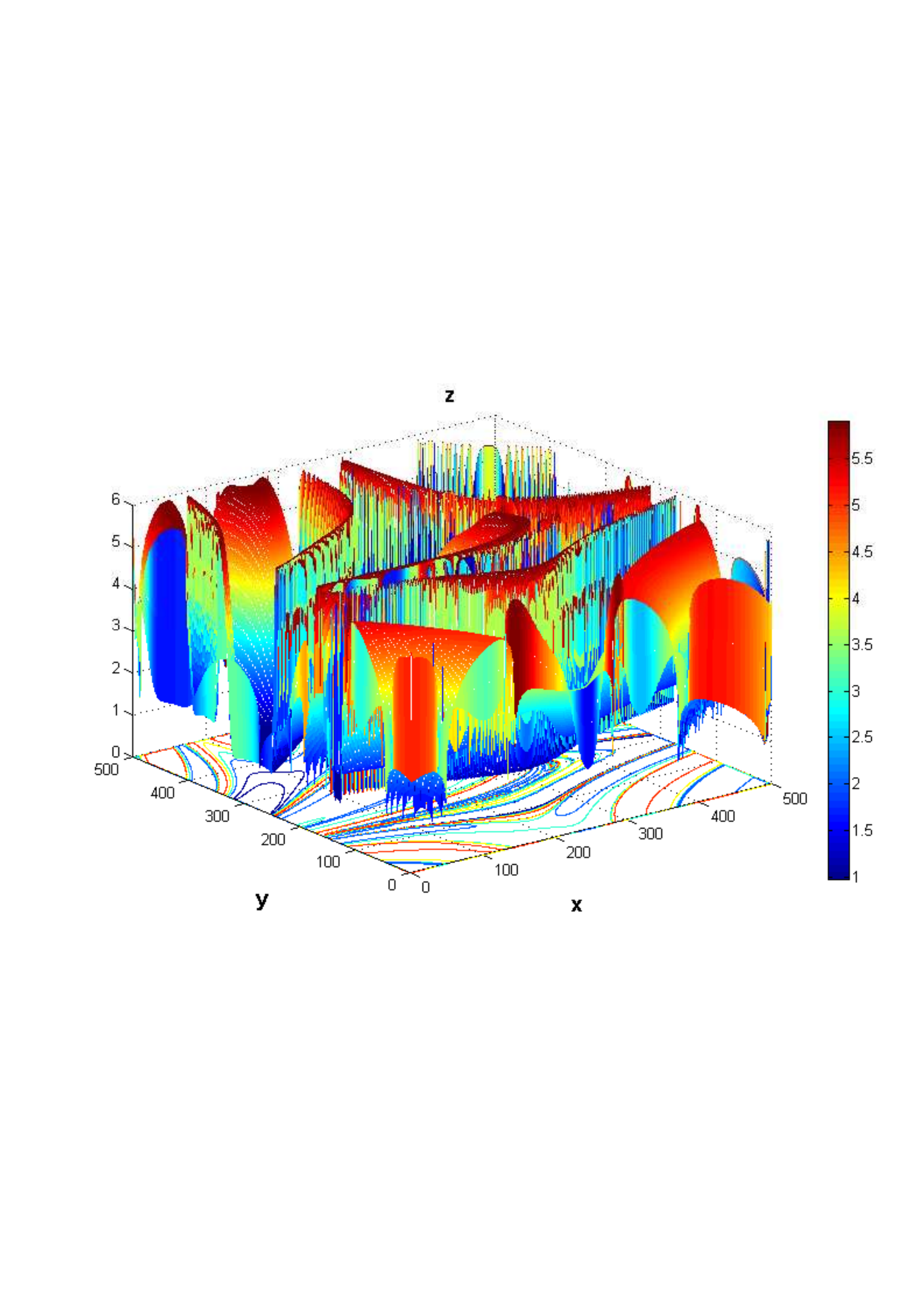}
                       	\end{center}
		\caption{ The two dimensional spatial attractor of the system  \eqref{(1.1)}-\eqref{(1.5)} at $t=500$. }
		\end{figure}

\begin{figure}[!htp]
	\begin{center}
	             \includegraphics[scale=0.35]{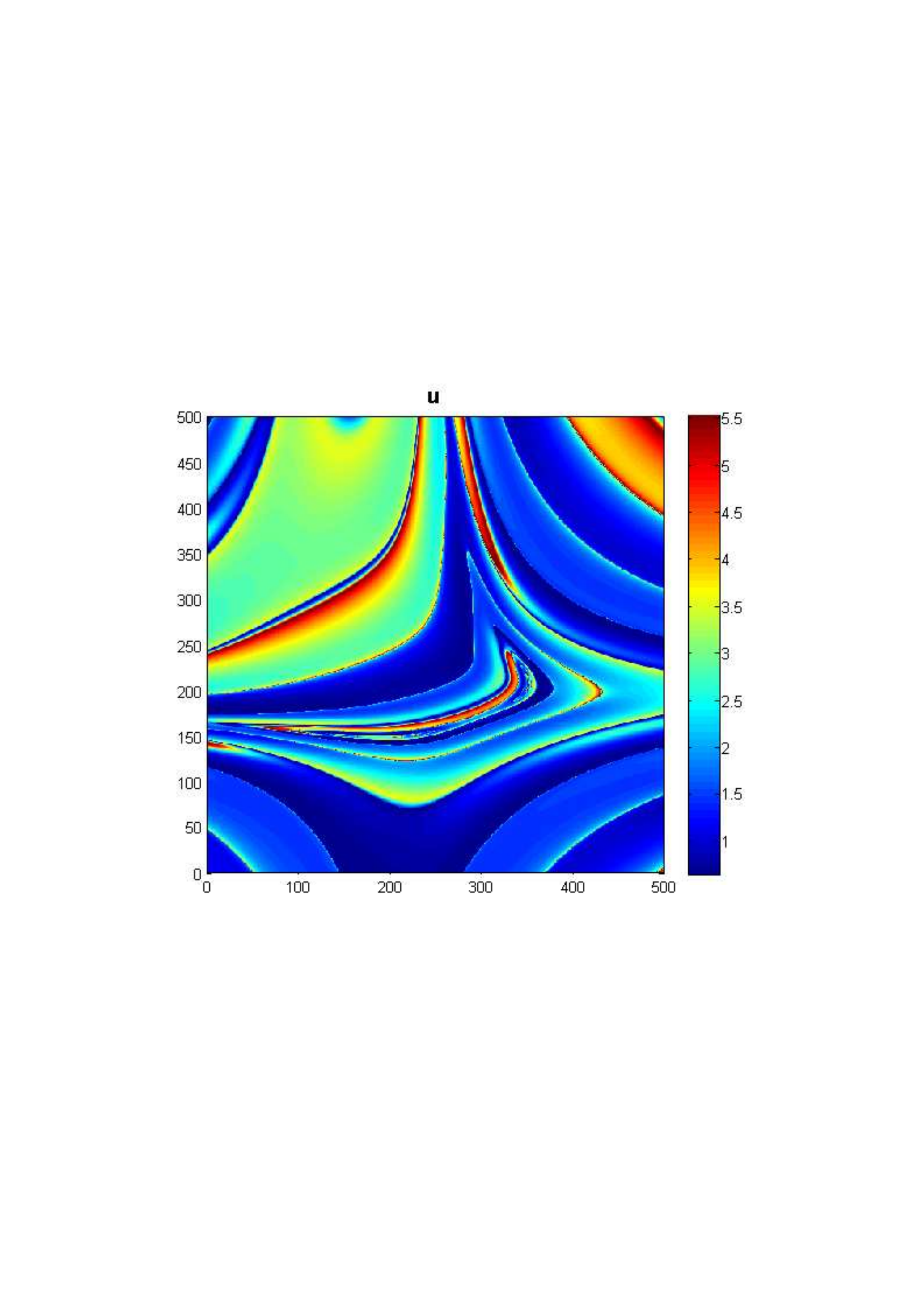}
			\includegraphics[scale=0.35]{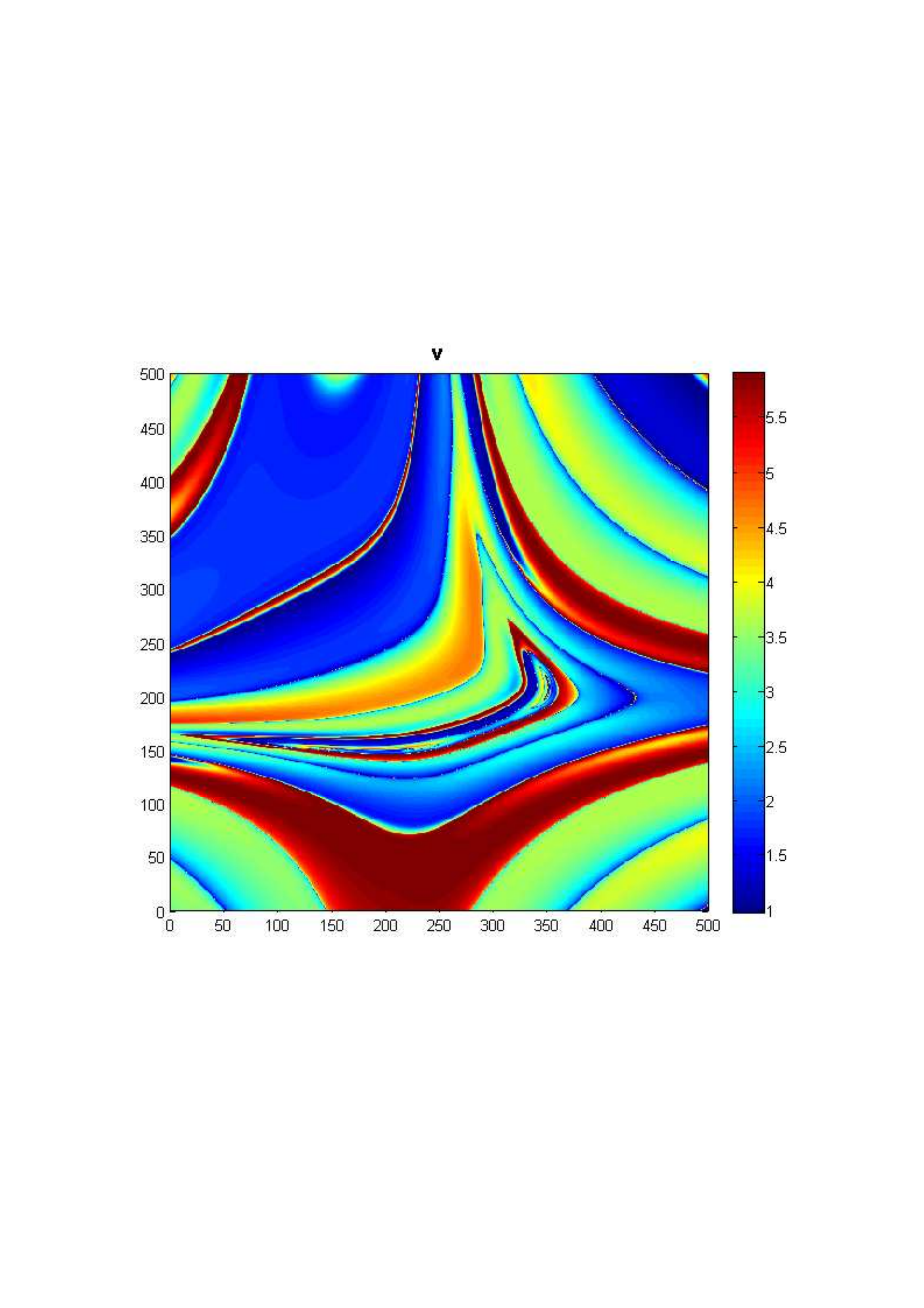}
			\includegraphics[scale=0.35]{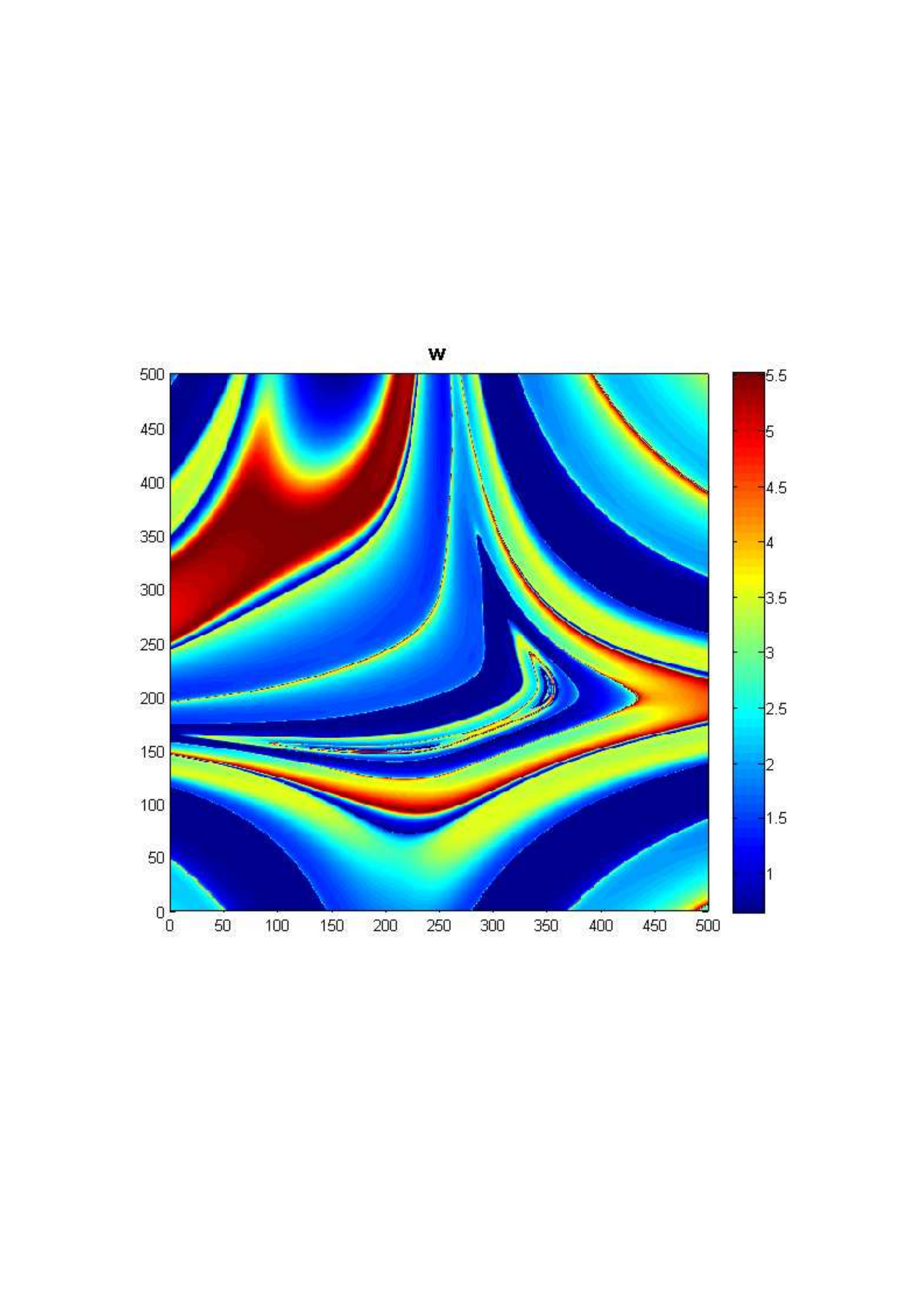}
			\includegraphics[scale=0.35]{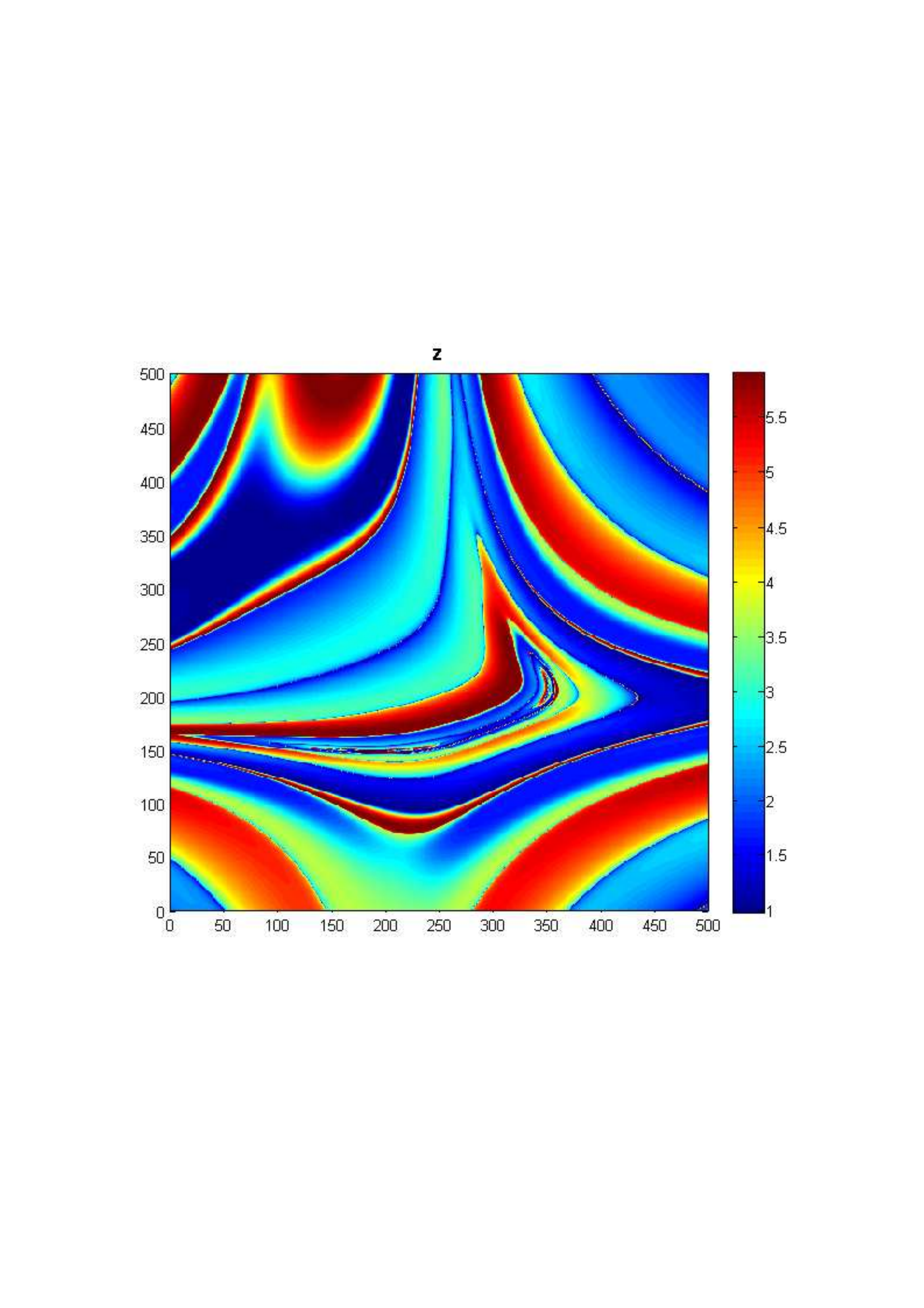}
			\end{center}
		\caption{The densities of $u,v,w,z$ species are shown as contour plots in two dimensional plane at $t=500$. }
		\end{figure}

\section{Attractor reconstruction}
\label{9}

The dynamics of the four-species Brusselator model described by \eqref{(1.1)} - \eqref{(1.5)} is visualized with the aid of a delay-time method. This method was proposed by Takens and Mane \cite{TM81}, and it ensures equivalence between the topological properties of the actual and reconstructed attractors. The delay method consists of a reconstruction performed from measurements of a single variable. Here we  consider the variable $u$, which is one of the four state variables of the system. 

According to Takens and Mane \cite{TM81}, the dynamics of the system can be completely described by the time series, $u(t)$, which is numerically calculated  at  a fixed position in space. This time series can be embedded  in an $m$-dimensional embedding space and reconstructed as a \emph{pseudo-trajectory } through the following embedding vectors, which form an embedding matrix:  
\begin{eqnarray}
&&y_1 = (x(t_0 ), x(t_0 + \tau ),\dots, x(t_0 + (m - 1)\tau ))^{T},
\nonumber \\
&&y_2 = (x(t_0 + l ), x(t_0 + l + \tau )), \dots, x(t_0 + l +(m-1)\tau)^{T},
\nonumber \\
&& \dots
\nonumber \\
&&y_s = (x(t_0 + (s - 1)l ), x(t_0 + (s - 1)l + \tau), \dots, x(t_0 + (s - 1)l + (m - 1)\tau ))^{T}. \nonumber \\
\end{eqnarray}
Here, $\tau$ is called the ``delay-time", $l$ is the sampling interval, and $w=(m-1)\tau$  is the ``window length"  which represents the time spanned by each embedding vector. Selection of appropriate values for parameters $\tau$ and $l$ in the embedding procedure is particularly important for the reliability of the results. To ensure the equivalence between the topological properties of actual and reconstructed attractors, a formal criterion was proposed by Takens, namely that $m\geq 2d+1$.  This criterion relates the embedding dimension, $m$, to the attractor dimension, $d$. In practice, the dimension, $d$, of the attractor is unknown and has to be determined. Choosing the optimal embedding parameter, $m$, and the delay-time, $\tau$, is rather nontrivial. 

In this paper, we adopt the interactive technique suggested by Albano \textit{et al.} \cite{AM88}. The authors have combined a singular-value decomposition, which leads to a set of statistically independent variables, and the Grassberger-Procaccia algorithm, \cite{GP84}, to determine the dimension of the attractors. The algorithm consists of four steps: 

\begin{enumerate}

\item Choose $m$  and $\tau$ so that the window, $w$, is a few times larger than the correlation time of the time series, $u(t)$. A  rule thumb for selecting $\tau$ is to choose the time at which the autocorrelation time falls to about $1/e$. This delay-time ensures that the embedding vectors  spanning the phase space have become, in some sense, independent. 

\item Perform a singular-value decomposition of the embedding matrix, $Y =V\Sigma U^T$. Singular values, $\sigma_i$, occupying the diagonal of the diagonal matrix, $\Sigma$, are discarded below a certain threshold, because one can  consider them as originating from noise.  In this paper,  the threshold was set to be $10^{-2}$. 

\item Calculate the correlation integral from which  the correlation dimension, $d$, of the attractor is then deduced as the limit of the slopes of the log-log plot of the correlation integrals. A test of Takens' criterion is then performed; if it is not satisfied, i.e., if $m\le2d+1$, then  increase $m$ until this criterion is satisfied.  

\item Refine the value of the embedding dimension, $m$; the suitable embedding dimension is the one that maximizes the straight line parts in the log-log plot of the correlation integrals.

\end{enumerate}

It is well known that the addition of diffusion, to an ODE system, can lead to temporal chaos. In fact, the model without diffusion (the ODE case), exhibiting oscillatory dynamics for a certain parameter set, can be destabilized to chaos solely via adding diffusion \cite{Pa93}. Here, we test this observation on \eqref{(1.1)} - \eqref{(1.5)}.  We start our numerical experiments from a periodic state that exists in the absence of diffusion. The following  parameter set yields a periodic state: 

\begin{equation}   
D_{1} = .0126, D_{2} = .126, D_{3} = .0125, D_{4} = .125, \alpha = 2, \beta = 5.9. 
\end{equation}

This oscillatory dynamics is described by a limit cycle. Imposing diffusion in the four-species Brusselator model, that is

\begin{equation}
 a = 10^{-6}, b = 2 \times 10^{-6}, c = 3 \times 10^{-6}, d = 4 \times 10^{-6},
 \end{equation}
 
shows that the oscillatory dynamics becomes chaotic. We next reconstruct the low dimensional attractor from the time series of species $u$, which is obtained by fixing a spatial location and following the trajectory in time. The dimension of the reconstructed attractor, which corresponds to the slope of the correlation dimension of the attractor, is found to be approximately $27.54$. This shows that the attractor of the full diffusive system is quite high.  

For the above range of parameters, we can extract a lower bound for the constant $K^{\prime}$, from Theorem \ref{gattrdlower}. As 
  
 \begin{equation}
  K^{\prime }\left\{ \frac{\left[ 2\left( \beta -1-\alpha
^{2}\right) -\left( D_{1}+D_{2}+D_{3}+D_{4}\right) \right] }{\left(
a+b+c+d\right) }\right\} ^{\frac{N}{2}} \geq 27.54,
\end{equation}
 
one obtains after plugging in the parameters that essentially 

 \begin{equation}
 K^{\prime} \geq \frac{91}{100}.
 \end
{equation}

Note that the fractional dimension of $27.54$, indicates that the attractor is strange. 
To test whether or not the dynamics on it is chaotic, we estimated the Lyapunov exponents by the method proposed by Lai and Chen \cite{LC98}. We find the largest exponent to be approximately equal to $2.17$. This implies that the dynamics on the strange attractor, for this parameter set, is indeed chaotic in time. In particular, this also tells us that for a a parameter set where $D_1 \neq D_3$, $D_2 \neq D_4$, one may obtain a chaotic attractor, the existence of which becomes harder to prove, although possible, as we demonstrate.

\end{document}